\documentclass[12pt,titlepage]{article}

\usepackage[default]{jasa_harvard}    
\usepackage{JASA_manu}

\RequirePackage[OT1]{fontenc}
\RequirePackage{amsthm,amsmath}
\RequirePackage[colorlinks,citecolor=blue,urlcolor=blue]{hyperref}
\RequirePackage{graphicx,subfigure,latexsym,amssymb}
\RequirePackage{float,epsfig,multirow,rotating,times}
\RequirePackage{upgreek,wrapfig}
\RequirePackage{comment}

\newcommand {\ctn}{\citeasnoun} 
\usepackage{graphicx,subfigure,amsmath,latexsym,amssymb}
\usepackage{float,epsfig,multirow,rotating,times}
\usepackage{upgreek,wrapfig}


\newtheorem{theorem}{Theorem}[section]
\newtheorem{lemma}[theorem]{Lemma}

\newcommand{\abs}[1]{|#1|}

\newcommand{\bTheta}{\boldsymbol{\Theta}}

\newcommand{\bY}{\boldsymbol{Y}}


\begin{document}

\normalsize

\title{\vspace{-0.8in}
{\bf Bayesian MISE convergence rates of Polya urn based density estimators: 
asymptotic comparisons and choice of prior parameters}}
\author{Sabyasachi Mukhopadhyay and Sourabh Bhattacharya\thanks{
Sabyasachi Mukhopadhyay is a postdoctoral research associate at Biostatistics unit, University of Hohenheim, 
and Sourabh Bhattacharya 
is an Associate Professor in
Interdisciplinary Statistical Research Unit, Indian Statistical
Institute, 203, B. T. Road, Kolkata 700108.
Corresponding e-mail: sourabh@isical.ac.in.}}
\date{\vspace{-0.5in}}
\maketitle

\begin{abstract}

Mixture models are well-known for their versatility, and the Bayesian paradigm is a suitable 
platform for mixture analysis, particularly when the number of components is unknown. 
\ctn{Bhattacharya08} introduced a mixture model based on the Dirichlet process, where 
an upper bound on the unknown number of components is to be specified. Here we consider a Bayesian asymptotic
framework for objectively specifying the upper bound, which we assume to depend on the sample size. 
In particular, we define a Bayesian analogue of the mean integrated squared error (Bayesian $MISE$), and
select that form of the upper bound, and also that form of the precision parameter of the 
underlying Dirichlet process, 
for which Bayesian $MISE$ 
of a specific density estimator, which is a 
suitable modification of the Polya-urn based prior predictive model, converges at sufficiently fast rate. 
As a byproduct of our approach, we investigate asymptotic choice of the precision parameter of the 
traditional Dirichlet process mixture model; the density estimator we consider here is a modification of the 
prior predictive distribution of \ctn{Escobar95} associated with the Polya urn model.
Various asymptotic issues related to the two aforementioned mixtures, including comparative
performances, are also investigated.
We also perform simulation experiments for comparing the performances of the approaches associated with \ctn{Bhattacharya08} 
and \ctn{Escobar95} in terms of Bayesian $MISE$ for various choices of the true, data-generating distribution,
and demonstrate that the approaches related to \ctn{Bhattacharya08} generally outperform those associated with \ctn{Escobar95}.
%
\\[2mm]
{\bf Keywords:} {\it Bayesian Asymptotics, Dirichlet Process, Mean Integrated Squared Error, 
Mixture Analysis, Polya Urn.}

\end{abstract}

\tableofcontents

\newpage

\section{{\bf Introduction}}
\label{sec:intro}

In recent years, the use of nonparametric prior in the context of Bayesian density estimation arising out of 
mixtures has received wide attention thanks to their flexibility and advances in computational methods. 
The study of nonparametric priors in the context of Bayesian density estimators has been initiated by 
\ctn{Ferguson83} and \ctn{Lo84} who derived the associated posterior 
and predictive distributions. 

The set-up of for nonparametric Bayesian density estimation with mixture priors can be represented in the following hierarchical form:
for $i=1,\ldots,n$, $Y_i\sim K(\cdot\mid\theta_i)$ independently;
$\theta_1, \ldots, \theta_n\stackrel{iid}{\sim} F$ and $F\sim \Upsilon$, where $F$ is a random probability measure
and $\Upsilon$ is some appropriate nonparametric prior distribution on the set of probability measures.
An important choice of $\Upsilon$ is of course the Dirichlet process prior, which we denote by $DP(\alpha G_0)$,
$G_0$ being the expected probability measure and $\alpha$ being the precision parameter. 

\subsection{{\bf Two competing models based on Dirichlet process}}
\label{subsec:ew_and_sb}

\subsubsection{{\bf The EW model}}
\label{subsubsec:ew}

With the Dirichlet process prior
the set-up of 
\ctn{Lo84} boils down to the \ctn{Escobar95} (henceforth EW) model. For our purpose in this paper, 
our interest as a density estimator 
is the following modification of the prior predictive associated with EW:
\begin{equation}
\hat f_{EW}(y\mid \theta_1,\ldots,\theta_n) = \frac{\alpha}{\alpha+n}A(y)+\frac{1}{\alpha+n}\sum_{i=1}^{n}K^\star(y\vert\theta_i),
\label{eq:ew_prior_pred_0} 
\end{equation}
where $A(y)=\int K^\star(y\vert\theta)dG_0(\theta)$; $K^\star(y\vert\theta)$ being a modification
of the original $K(y\vert\theta)$. 
That is, we are interested in the posterior distribution of the statistic $\hat f_{EW}(y\mid \theta_1,\ldots,\theta_n)$
given data modeled by the original EW model having kernel $K(\cdot\vert\theta)$. We shall consider priors for $\sigma$ that are dependent upon the
sample size $n$, such that $\sigma\rightarrow 0$ in probability with respect to the prior. Thus, $\sigma+\hat k_n\rightarrow k~(>0)$ in probability.

Specifically, we shall consider the situation where the model  
$K(y\vert\theta)$ is the density of $N(y:\theta,\sigma^2)$, the normal density with mean $\theta$, variance
$\sigma^2$, and evaluated at $y$. The kernel associated with the density estimator, 
$K^\star(y\vert\theta)$, 
is the density of the truncated normal density $N(y:\theta,(\sigma+\hat k_n)^2)\mathbb I_{\{|y|\leq a\}}$,
where $a>0$, and for any set $S$, $\mathbb I_S$ is the indicator function of the set $S$. In the above, $\hat k_n$ is a strongly
consistent estimator, based on $n$ data points, of the scale $k~(>0)$ associated with the true data-generating density
(see Section \ref{subsec:true_distribution}).
We assume that there exists $\eta>0$ such that $\hat k_n>\eta$ for all $n\geq 1$, for almost all sequences $\hat k_n$.
We further assume that $(\hat k_n-k)^2$ is uniformly integrable with respect to the true, data-genrating distribution.
The last assumption ensures that $(\hat k_n-k)^2$ converges to zero even in expectation with respect to the true
distribution. An example of such a consistent estimator is provided in Section \ref{subsubsec:hat_k_n}.


In (\ref{eq:ew_prior_pred_0}), the random measure $F$ has been integrated out
to arrive at the following Polya urn distribution of $\theta_1,\ldots,\theta_n$:
\begin{eqnarray}
\theta_1 &\sim & G_0\nonumber\\
\theta_i\vert\theta_1,\ldots,\theta_{i-1} &\sim & \frac{\alpha}{\alpha+i-1}G_0+\frac{1}{\alpha+i-1}\sum_{j=1}^{i-1}\delta_{\theta_j},
\ \ \mbox{for} \ \ i=2,\ldots,n,
\nonumber
\end{eqnarray}
where $\delta_{\theta_j}$ denotes point mass at $\theta_j$.

In our case, we shall assume compact support of the base measure $G_0$. It follows that $K^*$ is a compactly supported Gaussian kernel. Note that
in the frequentist literature compactly supported kernels are often used for density estimation, particularly for deriving theoretical results.
See, for example, \ctn{Moreira12}, \ctn{Liang13} (see also \ctn{Huntsman17} and the references therein), 
for some relatively recent works in this regard. Since in this paper we deal with density estimation, 
considering compact support of $K^*$ is not that retrogressive.

\subsubsection{{\bf The SB model}}
\label{subsubsec:sb}

Though very well known, the EW model has several draw backs in terms of computational efficiency 
which manifest themselves particularly when applied to massive data. 
\ctn{Bhattacharya08} (henceforth SB) proposed a new model which is shown to bypass the problems 
of the EW model (see \ctn{Sabya10a}, \ctn{Sabya10}, \ctn{Sabya13} for the details). 
The essence of the SB model lies in the assumption that data points are independently and identically distributed
as an $M$-component mixture model, where the parameters of the mixture components, which we
denote by $\theta_1,\ldots,\theta_M$, are samples from a Dirichlet process. 
In other words, the model of SB is given by the following hierarchical structure:
\begin{eqnarray}
y_1,\ldots,y_n &\stackrel{iid}{\sim}&\frac{1}{M}\sum_{i=1}^MK(\cdot\vert\theta_i)\label{eq:hier_sb1}\\
\theta_1,\ldots,\theta_M &\stackrel{iid}{\sim}& F\label{eq:hier_sb2}\\
F&\sim & DP(\alpha G_0).\label{eq:hier_sb3}\nonumber
\end{eqnarray}
The density estimator that we are interested in is of similar form as (\ref{eq:hier_sb1}) but
$K(\cdot\vert\theta_i)$ is modified to $K^\star(\cdot\vert\theta_i)$. 
In other words, the
density estimator corresponding to the SB model which we shall work on is the following:
\begin{equation}
\hat f_{SB}(y\vert\theta_1,\ldots,\theta_M)=\frac{1}{M}\sum_{i=1}^MK^\star(y\vert\theta_i).
\label{eq:sb1}
\end{equation}
In other words, we are interested in the posterior distribution of $\hat f_{SB}(y\vert\theta_1,\ldots,\theta_M)$
given data modeled by the original SB model having kernel $K(\cdot\vert\theta)$.
As in the case of EW, for the SB model also we set 
$K(y\vert\theta)\equiv N(y:\theta,\sigma^2)$ and
$K^\star(y\vert\theta)\equiv N(y:\theta,(\sigma+\hat k_n)^2)\mathbb I_{\{|y|\leq a\}}$.

Marginalizing out $F$ results in the Polya urn distribution of $\theta_1,\ldots,\theta_M$.
Thus, the total number of
distinct components of the SB mixture, although random, is bounded above by $M$, while
in the EW mixture (\ref{eq:ew_prior_pred_0}) the corresponding upper bound is $n$. 
If $M$ is chosen to be much 
less than $n$,
then this idea entails great computational efficiency compared to the EW model,
particularly in the case of massive data.
Moreover, if $M=n$, and $Y_i$ is associated with $\theta_i$ for every $i$, then the SB model reduces to
the EW model, showing that the EW model is a special case of the SB model (see \ctn{Sabya10a}, for example).

\subsubsection{{\bf Discussion of the density estimators $\hat f_{EW}(y|\theta_1,\ldots,\theta_n)$ and $\hat f_{SB}(y|\theta_1,\ldots,\theta_M)$}}
\label{subsubsec:density_discussion}
The issue of modifying the original kernel $K(y|\theta)$ to $K^*(y|\theta)$ for both EW and SB asymptotics needs some discussion.
First note that in the literature asymptotics of traditional DP mixtures (also, the EW model) concerns convergence of the posterior distribution of the random probability measure
as the data size increases; convergence rates of the corresponding posterior predictive density are a byproduct of the
posterior contraction rate; see Corollary 5.1 of \ctn{Ghosal01}. 
It is crucial to note that here the data are modeled as: $y_1,\ldots,y_n\stackrel{iid}{\sim}\int K(\cdot|\theta)dF(\theta)$, where $F$ follows the Dirichlet process. 
This $iid$ set-up is very convenient for asymptotic calculations associated with the posterior of $F$.

Now note under the SB model, given $F$, 
for any value $M$,
\begin{eqnarray}
f_{SB,F}(y)&=& \frac{1}{M}\sum_{i=1}^M \int K(y\vert\theta_i)\prod_{j=1}^MdF(\theta_j)\nonumber\\
&=&\frac{1}{M}\sum_{i=1}^M\int K(y\vert\theta_i)dF(\theta_i)\nonumber\\
&=&\int K(y\vert\theta)dF(\theta),\notag
\end{eqnarray}
so that the marginal distribution of any data point given $F$ is the same as that of EW. However, given $F$, $y_1,\ldots,y_n$ are
not independent. Indeed, their joint distribution conditional on $F$ is 
\begin{equation*}
[y_1,\ldots,y_n|F]=\frac{1}{M^n}\int \left\{\prod_{i=1}^n\left[\sum_{j=1}^MK(y_i|\theta_j)\right]\right\}\prod_{j=1}^MdF(\theta_j).
\end{equation*}
This dependent joint distribution is not as convenient for asymptotic calculations as in the $iid$ EW case.
Thus the traditional approach to DP mixture asymptotics and then derivation of the corresponding posterior predictive convergence rate as by-product, seems to be
unwieldy in the SB model scenario. 
The alternative approach described in Section \ref{subsubsec:sb} 
facilitates asymptotic posterior calculation such that the density estimator $\hat f_{SB}(y|\theta_1,\ldots,\theta_M)$ has fast convergence rate with respect to
Bayesian $MISE$ (introduced in Section \ref{sec:Bayesian_MISE}), to the true, 
data-generating distribution whenever the assumptions 
of the true density detailed in Section \ref{subsec:true_distribution} hold, that is, essentially when the true distribution is a normal mixture with respect to the mean. 
The simulation studies in Section \ref{sec:simulation_study} are not only in accordance with our theoretical results,
but they also demonstrate that when the true density is essentially a normal mixture of the mean, 
the SB-based density estimator $\hat f_{SB}(y|\theta_1,\ldots,\theta_M)$ 
significantly outperforms the
original and unrestricted density estimators proposed in EW and SB. 
Interestingly, these latter density estimators do not need any restrictive assumptions on bandwidth; in fact, in Section \ref{sec:simulation_study} we assume that
the kernel variances are all different and that $(\theta_i,\sigma_i)$ are jointly samples from the underlying Dirichlet process, and hence there is no need to introduce
the consistent bandwidth estimator $\hat k_n$. On the other hand, for the SB-based
density estimator $\hat f_{SB}(y|\theta_1,\ldots,\theta_M)$, we assume a single $\sigma$ with a prior depending on the sample size such that $\sigma$ tends to zero in probability
as the sample size goes to infinity. That in spite of such restriction associated with $\hat f_{SB}(y|\theta_1,\ldots,\theta_M)$ as compared to the original density estimators proposed in EW and SB,
the performance of the former is still much superior, shows the worth of introducing $\hat f_{SB}(y|\theta_1,\ldots,\theta_M)$ when the true density has the form described
above. 

We introduce the EW-based modified density estimator $\hat f_{EW}(y|\theta_1,\ldots,\theta_n)$ and derive its asymptotic theory mainly for comparability of our approach to SB-based asymptotics of 
$\hat f_{SB}(y|\theta_1,\ldots,\theta_M)$. Indeed, we use the same methods of asymptotics calculations for both the density estimators. Eventually we see that 
$\hat f_{SB}(y|\theta_1,\ldots,\theta_M)$ significantly outperforms the EW-based density estimator
$\hat f_{EW}(y|\theta_1,\ldots,\theta_n)$, both theoretically as well as in simulation studies, when the assumptions of the true density detailed in Section \ref{subsec:true_distribution} hold.
However, $\hat f_{EW}(y|\theta_1,\ldots,\theta_n)$ does outperform both the original density estimators of EW and SB, demonstrating its utility when the true model
has the above form. 

From the above arguments it is evident that at least when the true data generating distribution is of the form detailed in Section \ref{subsec:true_distribution}, the density estimator
$\hat f_{SB}(y|\theta_1,\ldots,\theta_M)$ is to be preferred over the other Bayesian density estimators from both theoretical and practical perspectives. 
In our future efforts, we shall generalize the class of true distributions and derive more general asymptotic results, even for the more complex and dependent SB set-up.

\subsection{{\bf Alternative truncated density estimators}}
\label{subsec:alternative_truncation}


In (\ref{eq:sb1}) and (\ref{eq:ew_prior_pred_0}), we assumed that each kernel of the mixture density
is a truncated normal. Alternatively, one may consider the following density estimators: 
\begin{equation}
\tilde f_{EW}(y\mid \theta_1,\ldots,\theta_n) =\varphi_1(\Theta_n) \left[\frac{\alpha}{\alpha+n}A(y)
+\frac{1}{\alpha+n}\sum_{i=1}^{n}K^\star(y\vert\theta_i)\right]\mathbb I_{\left\{|y|\leq a\right\}},
\label{eq:ew_prior_pred_1} 
\end{equation}
where  
$K(y\vert\theta)\equiv N(y:\theta,\sigma^2)$ and
$K^\star(y\vert\theta)\equiv N(y:\theta,(\sigma+\hat k_n)^2)$, and
\begin{align*}
\varphi_1(\Theta_n)&=\left\{\frac{\alpha}{\alpha+n}\int\left[\Phi\left(\frac{a-\theta}{\sigma+\hat k_n}\right)
-\Phi\left(\frac{-a-\theta}{\sigma+\hat k_n}\right)\right]dG_0(\theta)\right.\notag\\
&\qquad\qquad\left.+\frac{1}{\alpha+n}\sum_{i=1}^{n}\left[\Phi\left(\frac{a-\theta_i}{\sigma+\hat k_n}\right)
 -\Phi\left(\frac{-a-\theta_i}{\sigma+\hat k_n}\right)\right]\right\}^{-1}, 
\end{align*}
so that truncation of each mixture kernel is not required. Similarly, the alternative SB density estimator 
will have the following form with $K(y\vert\theta)\equiv N(y:\theta,\sigma^2)$ and 
$K^\star(y\vert\theta)\equiv N(y:\theta,(\sigma+\hat k_n)^2)$:
\begin{equation}
\tilde f_{SB}(y\vert\theta_1,\ldots,\theta_M)=\varphi_2(\Theta_{M})\left[\frac{1}{M}\sum_{i=1}^MK^\star(y\vert\theta_i)\right]
\mathbb I_{\{|y|\leq a\}},
\label{eq:sb2}
\end{equation}
where, 
\begin{equation*}
\varphi_2(\Theta_{M})=
\left\{\frac{1}{M}\sum_{i=1}^M
\left[\Phi\left(\frac{a-\theta_i}{\sigma+\hat k_n}\right)
-\Phi\left(\frac{-a-\theta_i}{\sigma+\hat k_n}\right)\right]\right\}^{-1}.
\end{equation*}
The true distribution, detailed in Section \ref{subsec:true_distribution}, can be modified analogously.

The density estimators (\ref{eq:ew_prior_pred_1}) and (\ref{eq:sb2}) are ratio estimators,
and the delta-method may be invoked for handling the asymptotic theory of such estimators. 
Indeed, we have verified that all the asymptotic results with these ratio estimators remain the same as
those associated with (\ref{eq:ew_prior_pred_0}) and (\ref{eq:sb1}) and require exactly the same set of assumptions, 
only except the result that both (\ref{eq:ew_prior_pred_1}) and (\ref{eq:sb2})
converge to the same true distribution. Although we expect the result to hold, the proof
that (\ref{eq:ew_prior_pred_0}) and (\ref{eq:sb1}) converge to the same true distribution, presented in this paper, 
can not be extended in the case of (\ref{eq:ew_prior_pred_1}) and (\ref{eq:sb2}).
In any case, we do not pursue (\ref{eq:ew_prior_pred_1}) and (\ref{eq:sb2}) any further and henceforth,
concentrate only on (\ref{eq:ew_prior_pred_0}) and (\ref{eq:sb1}).

\subsection{{\bf Importance of Bayesian version of mean integrated squared error}}
\label{subsec:importance_Bayesian_MISE}
Mean integrated squared error ($MISE$) is classically a very well-established measure for evaluating 
classical density estimators; see, for example,
\ctn{Silverman86}. Attractively, it is additive in integrated squared bias and integrated variance, so that
the desired density estimator can be adjusted to account for this trade-off. Measures based on other distances,
such as the Hellinger distance, does not enjoy such property. In the context of Bayesian density estimation
with respect to (\ref{eq:ew_prior_pred_0}) and (\ref{eq:sb1}), note that too many mixture components are expected
to reduce the bias, but can inflate the variance significantly. 
Since $\alpha$ controls the number of mixture components of EW and both $\alpha$ and $M$ control the number of mixture
components of SB, it is clear that they must be chosen by appropriately accounting for the Bayesian bias-variance
trade-off. In other words, the bias-variance trade-off is
very important for Bayesian density estimation, and some appropriate Bayesian version of classical $MISE$
is necessary to quantify such trade-off.
In this regard, we introduce our Bayesian $MISE$ measure in Section \ref{sec:Bayesian_MISE}. 
It is important to note that for our Bayesian density estimators, $\hat k_n$ essentially plays the role
of the bandwidth in classical kernel density estimators, and since it is a strongly consistent estimator of
the scale associated with the true distribution, there is essentially no bandwidth selection problem associated with our
Bayesian $MISE$.

\section{{\bf Overview of our contributions}}
\label{sec:overview_contributions}

Assuming the Polya-urn based mixture set-up 
in this paper we investigate choices of $M$ and $\alpha$ by obtaining the Bayesian $MISE$ convergence rate of 
our SB-based density estimator given by (\ref{eq:sb1}). 
We will assume $M$ to be increasing with $n$; in fact, our subsequent asymptotic calculations
show that $M$ increasing at a rate slower that $\sqrt{n}$, is adequate. 
Since the interplay between $M$ and $\alpha$ is important, we also assume $\alpha$ to be increasing with $n$.
But if $\alpha$ increases too fast
then convergence to the true distribution need not attain; we will investigate choices of $\alpha$
that lead to convergence and non-convergence to the correct model.
To reflect the dependence of $M$ and $\alpha$ on $n$ henceforth we shall write $M_{n}$ and $\alpha_n$. 
%
We show that the prior parameters driving the model can be selected in a way that
the Bayesian $MISE$ of the respective model convergences to zero at a desirable rate.
Thus, we obtain objective, asymptotic choices of the prior parameters. 
This is important since in applications the prior 
parameters are almost always chosen by {\it ad hoc} means. 

In parallel with the development related to the SB-based density estimator, we develop the 
corresponding Bayesian $MISE$-based asymptotic theory for the EW-based density
estimator (\ref{eq:ew_prior_pred_0}), where we discuss choices of the prior parameters associated with the
EW model. In fact, while we proceed, we shall always state the results related to the EW
model first, and then the corresponding result on the SB model, since the former is a simpler model compared to SB, 
and so, the results/calculations are simpler and make the
SB-based calculation steps easier to follow.

We show that both our density estimators corresponding to EW and SB converge to the same true distribution, 
and that for the same choices of the prior parameters common to both the EW and the SB models,
the SB model converges much faster to the true distribution with respect to Bayesian $MISE$.

We back up our theoretical results with simulation experiments where we also include, in addition to the
density estimators (\ref{eq:sb1}) and (\ref{eq:ew_prior_pred_0}), the original density estimators
proposed in \ctn{Bhattacharya08} and \ctn{Escobar95}, which allow the scales of the mixture components
to be different and random, and consider them along with the mean parameters as samples from a bivariate
Dirichlet process. We demonstrate that the methods based on SB generally outperform those associated with EW
in terms of Bayesian $MISE$.


There is also an important question regarding the conditions leading to convergence of the mixtures
to the wrong models (that is, models that did not generate the data). In other words, this is a question
of model mis-specification. We show that the model of EW
can converge to a wrong model under relatively weak conditions, whereas much stronger conditions must
be enforced to get the SB model to converge to the wrong model. 

Furthermore, we consider a modified
version of SB's model that accommodates continuous mixing probabilities; 
however, as we demonstrate, all the results remain intact under this modified version.

Proofs of all the results 
are provided in
the supplement, whose sections have the prefix
``S-'' when referred to in this paper.
Additionally, in Section S-6 of the supplement, we 
we investigate the ``large $p$, small $n$" problem of both the EW and the SB set-up.
%

For all our $MISE$-based comparisons we assume that the kernel-based parameters (usually, location
and scale parameters) and the random measure $F$ have the same 
prior distributions under both EW and SB. 

The rest of the work is organized as follows. 
We introduce our notion of Bayesian $MISE$ in Section \ref{sec:Bayesian_MISE}.
In Section \ref{sec:assumptions} 
we provide details of the explicit forms of the 
EW-based and the SB-based
models and provide discussions on the assumptions used in our subsequent asymptotic calculations.
The assumptions regarding the true, data-generating distribution are also provided in the same section.
Section \ref{sec:posterior_mean_convergence}
provides results
showing convergence of the posterior expectations of the EW-based and the SB-based models, 
respectively, to the same true distribution, also providing the rates of convergence.
In Section \ref{sec:mise_bounds} we compute Bayesian $MISE$-based 
rates of convergence of the EW and the SB models. In Section \ref{sec:comparison_sb_ew}  
the $MISE$ rates of the two models are compared with each other 
while 
also demonstrating how asymptotic choices of the prior parameters can be made.
Using simulation experiments we compare the Bayesian $MISE$ based performances of the SB and EW based density estimators
in Section \ref{sec:simulation_study}, for various choices of the true distribution, demonstrating that
the SB based density estimators outperform those based on EW in most of the cases considered.
In Section \ref{sec:param_model}, the conditions, under which the models may converge to wrong distributions, 
are investigated.
Asymptotics of a modified version of the SB model are discussed in Section \ref{sec:modified_sb_model}.

\section{{\bf Bayesian MISE}}
\label{sec:Bayesian_MISE}

Assuming that $\hat f_n$ is an estimate of the true density $f_0$ based on the observed data
$\bY_n=(Y_1,\ldots,Y_n)'$,
the MISE of $\hat f_n$ is given by
\begin{equation}
MISE=\int E\{\hat f_n(y)-f_0(y)\}^2dy,
\label{eq:mise_classical}
\end{equation}
where the expectation is with respect to the data $\bY_n$. In our Bayesian context, 
we consider the following analogue of the classical definition:
\begin{equation} 
MISE^*_1=\int\{\hat f(y\vert\bY_n)-f_0(y)\}^2dy,
\label{eq:mise_star_1}
\end{equation}
where $\hat f(y\vert\bY)=\int \hat f(y\vert\Theta)\pi(\Theta\vert\bY_n)d\Theta$ denotes any choice of the
posterior predictive density estimator. 
Note that the choice of the posterior predictive density estimator is determined by the choice of $\hat f(y\vert\Theta)$.

We further modify the above definition by considering a weighted version,
given by
\begin{equation} 
MISE^*_2=\int\{\hat f(y\vert\bY_n)-f_0(y)\}^2f_0(y)dy.
\label{eq:mise_star_2}
\end{equation}
Thus, in (\ref{eq:mise_star_2}) $f_0(y)$ downweights those squared error terms $\{\hat f(y\vert\Theta)-f_0(y)\}^2$
which correspond to extreme values of $y$. 
Such weighting strategies that use the true distribution as weight, are not uncommon in the
statistical literature. The well-known Cram\'{e}r-von Mises test statistic (see, for example,
\ctn{Serfling80}) is a case in point.

It is easy to see that
\[MISE^*_1\leq MISE_1\] and
\[MISE^*_2\leq MISE_2,\]
where $MISE_1$ and $MISE_2$ are given by
\begin{eqnarray}
MISE_1&=&\int E\{\hat f(y\vert\Theta)-f_0(y)\}^2dy\nonumber\\
&=&\int \int\{\hat f(y\vert\Theta)-f_0(y)\}^2\pi(\Theta\mid\bY_n)d\Theta dy,
\label{eq:mise_bayesian_1}
\end{eqnarray}
and
\begin{eqnarray}
MISE_2&=&\int E\{\hat f(y\vert\Theta)-f_0(y)\}^2f_0(y)dy\nonumber\\
&=&\int\int\{\hat f(y\vert\Theta)-f_0(y)\}^2\pi(\Theta\mid\bY_n)f_0(y)d\Theta dy,
\label{eq:mise_bayesian_2}
\end{eqnarray}
Because of the inherent advantages of the weighted version in the case of extreme values, in this paper
we focus on $MISE^*_2$, which is dominated by $MISE_2$. More specifically, for both EW and SB models, 
we shall obtain rates of convergence of the respective $MISE_2$ to zero.
Note that even though $MISE^*_2$ is a measure regarding how close the posterior predictive density
$\hat f(\cdot|\bY_n)$ is close to the true density $f_0(\cdot)$, $MISE_2$ no longer considers the
distance between the point estimate $\hat f(\cdot|\bY_n)$ and $f_0(\cdot)$ directly; instead, it considers
the distance between the random density estimator $\hat f(\cdot|\Theta)$ and $f_0(\cdot)$, suitably weighted
by the posterior and the true density. Hence, $MISE_2$ seems to be a more ``Bayesian measure" compared to $MISE^*_2$. 
Further justification of dealing with $MISE_2$ is provided by the following argument. Note that by Markov's inequality, for any $\epsilon>0$,
\begin{align}
&P\left(\int\left\{\hat f(y|\Theta)-f_0(y)\right\}^2f_0(y)dy>\epsilon\Bigg\vert\bY_n\right) \notag\\
&\qquad\qquad<\epsilon^{-1}E\left[\int\left\{\hat f(y|\Theta)-f_0(y)\right\}^2f_0(y)dy\right]\notag\\
&\qquad\qquad =\epsilon^{-1}\int\int\{\hat f(y\vert\Theta)-f_0(y)\}^2\pi(\Theta\mid\bY_n)f_0(y)d\Theta dy\notag\\
&\qquad\qquad =\epsilon^{-1}MISE_2.\notag
\end{align}
In words, $MISE_2$ also bounds the posterior probability of the weighted, $\Theta$-specific random $MISE$ given by $\int\left\{\hat f(y|\Theta)-f_0(y)\right\}^2f_0(y)dy$, to exceed $\epsilon$.
Hence, it is important to have $MISE_2$ to converge to zero at a fast enough rate.

Since the bounds that we provide for $MISE_2$ automatically bound $MISE^*_2$, it is interesting to observe that the bounds 
for the $MISE$ associated with random density estimators are also bounds for the $MISE$ associated with 
the posterior predictive density estimators of the form $\hat f(\cdot|\bY_n)$. Thus, although we are interested
in the random density estimators and the corresponding Bayesian measure $MISE_2$, our techniques automatically provide
inference regarding the point density estimator $\hat f(\cdot|\bY_n)$.
Henceforth, for notational simplicity we refer to $MISE_2$ simply as $MISE$.

$MISE$ of the form (\ref{eq:mise_bayesian_2}) can be expressed conveniently as
\begin{eqnarray}
MISE &=& \int Var\left(\hat f\left(y\mid \Theta\right)\vert\bY_n\right)f_0(y)dy\nonumber\\ 
&&\quad\quad+ \int \left\{Bias(\hat f(y\mid \Theta)\vert\bY_n)\right\}^2f_0(y)dy,
\label{eq:mise_bayesian_3}
\end{eqnarray}
where
$Var\left(\hat f\left(y\mid \Theta\right)\vert\bY_n\right)$ denotes the variance of 
$\hat f\left(y\mid \Theta\right)$ with respect to the posterior $[\Theta\mid\bY_n]$ and 
\begin{eqnarray}
Bias(\hat f(y\mid \Theta)\vert\bY_n)&=&\left |E\left(\hat f(y\mid \Theta)\bigg\vert \bY_n\right)-f_0(y)\right |,
\label{eq:bias}
\end{eqnarray}
$E\left(\hat f(y\mid \Theta)\bigg\vert \bY_n\right)$ denoting the expectation of 
$\hat f(y\mid \Theta)$ with respect to $[\Theta\mid\bY_n]$.

\section{{\bf Assumptions for the competing models and the true data generating distribution}}
\label{sec:assumptions}

\subsection{{\bf The EW model and the associated assumptions}}
\label{subsec:EW_post}

We assume the following version of the EW model: for every $i=1,\ldots,n$; $n=1,2,\ldots$,
$\left[Y_{ni}\mid \theta_i,\sigma\right] \sim N(\theta_i, \sigma^2)$, 
the normal distribution with mean $\theta_i$
and variance $\sigma^2$.
In the above, $\theta_i \stackrel{iid}{\sim} F$, $F \sim D(\alpha_n G_0)$, 
where $G_0$ is a completely specified, compactly supported probability measure. 
We assume in particular that $G_0$ is supported on some compacts set $\mathbb S$ such that
$\mathbb S\supset [-a-c,a+c]\cup [-2a,2a]$, where $[-a-c,a+c]$ is the support of the mixing distribution of the
true distribution; see Section \ref{subsec:true_distribution}. An alternative to the assumption of compact
support of $G_0$ is to assume that the expectation of $\exp\left(\frac{4a|\theta|}{\eta^2}\right)$ exists
with respect to $G_0$ and is finite, which would yield the same results as reported in this paper. 
However, for large enough $a$ and/or sufficiently small $\eta$, this would imply that $G_0$ is extremely thin-tailed, 
which would severely (and unrealistically) restrict the class of possible base measures. Hence, a sufficiently large 
compact support of $G_0$ that contains $[-a-c,a+c]$ seems to be a much more realistic assumption, 
which we adopt for our purpose. 
Choices of the parameter $\alpha_n$ will be discussed subsequently. 

Further we assume a sequence of priors on $\sigma$ as 
$\sigma/ \sigma_n\sim G$, where $\sigma_n~(>0)$ is a sequence of constants such that 
$\sigma_n\rightarrow 0$,
and $G$ is fixed. 
Denoting $G_n(s)=G(s/\sigma_n)$, it follows that $\sigma\sim G_n$. This assumption regarding the prior of 
$\sigma$ is very similar to that of \ctn{Ghosal07}. 
Following \ctn{Ghosal07} we also assume that $P(\sigma>\sigma_n)=O(\epsilon_n)$, where $\epsilon_n\rightarrow 0$. 
As we make precise later, we let the choice of $\epsilon_n$ depend upon the other prior parameters.
We also assume that there exists a positive sequence $\{b_n\}_{n=0}^{\infty}$ satisfying $0<b_n<\sigma_n$
and $P(\sigma>\sigma_n)/P(b_n<\sigma\leq\sigma_n)=O\left(\frac{\epsilon_n}{1-\epsilon_n}\right)$.
Additionally, we shall also require that $\sigma_n\sim b_n$, that is, $\sigma_n/b_n\rightarrow 1$, as $n\rightarrow\infty$.

That the above conditions on the prior of $\sigma$ are not self-contradicting can be easily seen from
the following example. Let $\pi(\sigma)=\theta^{-1}_n\exp\left(-\sigma/\theta_n\right)$ be the exponential prior distribution
of $\sigma$ with mean $\theta_n$. Let $\theta_n=1/(n^r+n^{s+\delta})$, $\sigma_n=1/(n^r+n^s)+1/n^r$
and $b_n=2/(n^r+n^s)$, where $r,s,\delta>0$, $r>s$ and $s+\frac{\delta}{2}>r$. Then $\sigma_n\sim b_n$,
$P(\sigma>\sigma_n)=\exp\left(-\frac{\sigma_n}{\theta_n}\right)\rightarrow 0$,
$P(b_n<\sigma\leq\sigma_n)=\exp\left(-\frac{b_n}{\theta_n}\right)-\exp\left(-\frac{\sigma_n}{\theta_n}\right)
\rightarrow 0$, 
$P(\sigma>\sigma_n)/P(b_n<\sigma\leq\sigma_n)\sim \exp\left(-n^{2(s+\frac{\delta}{2}-r)}\right)$.
With $\epsilon_n=\exp\left(-n^{2(s+\frac{\delta}{2}-r)}\right)$, it is easily seen that
$P(\sigma>\sigma_n)=O\left(\epsilon_n\right)$ and 
$P(\sigma>\sigma_n)/P(b_n<\sigma\leq\sigma_n)=O\left(\frac{\epsilon_n}{1-\epsilon_n}\right)$.

From the pure Bayesian perspective it may be preferable to choose the prior of $\sigma$ and $F$
to be independent of $n$, but our choices, which depend upon $n$, lead to fast convergence rates with respect to Bayesian $MISE$, and
hence can perhaps qualify as appropriate objective priors.
We let $\Theta_n = (\theta_1, \ldots, \theta_n)'$. 

Observe that for every value of the sample size $n=1,2,\ldots$, we have a data set $\{Y_{ni};i=1,\ldots,n\}$ of size $n$ with associated
parameters $\Theta_n$, $\alpha_n$, $\sigma_n$, $\epsilon_n$. The data points $Y_{ni}$ are assumed to be independent
for each $n$ and $i$. The array of random variables $\{Y_{ni};i=1,\ldots,n;n=1,2,\ldots\}$ is the well-known triangular array
of random variables; see, for example, \ctn{Serfling80}.
For notational simplicity we drop the suffix $``n"$ in 
$\{Y_{ni};i=1,\ldots,n\}$ and simply denote it by $\{Y_1,\ldots,Y_n\}$.




Assuming that the data are modeled by the original EW approach we study asymptotic properties 
of the posterior distribution of density estimators of 
the following specific form: 
\begin{equation}
\hat f_{EW}(y\mid \Theta_n,\sigma) = \frac{\alpha_n}{\alpha_n+n}A_n(y)
+\frac{1}{\alpha_n+n}\sum_{i=1}^{n}\frac{\varphi(\theta_i,\sigma+\hat k_n)}{(\sigma+\hat k_n)}\phi\left(
\frac{y-\theta_i}{\sigma+\hat k_n}\right)\mathbb I_{\{|y|\leq a\}},
\label{eq:ew_prior_pred} 
\end{equation}
where $\phi(\cdot)$ is the standard normal density, 
$\varphi(\theta,\sigma+\hat k_n)=\left[\Phi\left(\frac{a-\theta}{\sigma+\hat k_n}\right)
-\Phi\left(\frac{-a-\theta}{\sigma+\hat k_n}\right)\right]^{-1}$, 
where $\Phi(\cdot)$ is the distribution function of the standard normal distribution,
and 
$A_n(y) = \int_{\theta}\frac{\varphi(\theta,\sigma+\hat k_n)}{(\sigma+\hat k_n)}\phi\left(\frac{y-\theta}{\sigma+\hat k_n}
\right)\mathbb I_{\{|y|\leq a\}}dG_0(\theta)$. 
It is important to note the difference between the model assumption for the data and the random density
of our interest given by (\ref{eq:ew_prior_pred}); even though the latter adds $\hat k_n$ to $\sigma$, 
the former does not consider addition of any positive
constant to $\sigma$. 
In spite of slightly inflating the variance in (\ref{eq:ew_prior_pred}), the form of the true distribution
(\ref{eq:true_density_2}), to which (\ref{eq:ew_prior_pred}) converges {\it a posteriori}, is not severely restricted.

\subsection{{\bf SB model and the associated assumptions}}
\label{subsec:SB_post}
As in the case of the EW model, here we consider the following random density estimator:
\begin{equation}
\hat f_{SB}(y\mid \Theta_{M_{n}},\sigma) = \frac{1}{M_{n}}\sum_{i=1}^{M_{n}}
\frac{\varphi(\theta_i,\sigma+\hat k_n)}{(\sigma+\hat k_n)}\phi\left(\frac{y-\theta_i}{\sigma+\hat k_n}\right)
\mathbb I_{\{|y|\leq a\}},
\label{eq:sb_model}
\end{equation}
where $M_{n}$ is the maximum number of distinct components the mixture model can have and $\Theta_{M_{n}} = (\theta_1, \theta_2, \ldots, \theta_{M_{n}})'$.
As in the EW case, here also we assume the triangular array of random variables 
$\{Y_{ni};i=1,\ldots,n;n=1,2,\ldots\}$, and
we denote $\{Y_{ni};i=1,\ldots,n\}$ by $\{Y_1,\ldots,Y_n\}$ for notational simplicity.
Define $Z_i=j$ if $Y_{i}$ comes from the $j$-th component of the mixture model. Denote $z$ as the realized
vector of $Z$.
We make the same assumptions regarding $\alpha_n$ and the prior of $\sigma$ as in EW. 
Additionally, we let $M_{n}$ increase
with $n$. 

To perform our asymptotic calculations with respect to the SB model,
we need to shed light on an issue associated with the frequentist estimate of $\sigma^{2}_{T}$, 
the variance of the true density $f_0$ generating the data. The assumptions on the true distribution $f_0$
are provided in Section \ref{subsec:true_distribution}.

Let $n_j=\#\{t:z_t=j\}$, $n=\sum_{j=1}^{M_n}n_j$, $\bar Y_j=\frac{\sum_{t:z_t=j}Y_t}{n_j}$, and
$\hat\sigma^2_{T,n} = \frac{\sum_{j=1}^{M_n}\sum_{t:z_t=j}(Y_{t}-\bar Y_j)^2}{n} 
= \frac{\sum_{j=1}^{M_n}n_j
s_{j,n}^2}{n}$, where $s_{j,n}^2=\frac{\sum_{t:z_t=j}(Y_{t}-\bar Y_j)^2}{n_j}$. Now,
defining $\bar Y=\frac{\sum_{j=1}^{M_n}n_j\bar Y_j}{n}$ we note that $\frac{1}{n}
\sum_{i=1}^n\left(Y_i - \bar Y\right)^2$ 
can be expressed, 
for any allocation vector 
$z=(z_1,\ldots,z_n)'$, as
\begin{eqnarray}
\frac{1}{n}\sum_{i=1}^n\left(Y_i - \bar Y\right)^2 
&=&\frac{1}{n}\sum_{j=1}^{M_n}\sum_{t:z_t=j}(Y_t-\bar Y)^2\nonumber\\ 
&=&\frac{1}{n}\sum_{j=1}^{M_n}n_j\left(\bar Y_j - \bar Y\right)^2 + \hat\sigma^2_{T,n}. \nonumber
\end{eqnarray}
Since $\frac{1}{n}\sum_{i=1}^n\left(Y_i - \bar Y\right)^2\rightarrow\sigma^2_T$ a.s.,
it would follow from the above representation that $\hat\sigma^2_{T,n}\rightarrow\sigma^2_T$ a.s.
if it can be shown that $\frac{1}{n}\sum_{j=1}^{M_n}n_j\left(\bar Y_j - \bar Y\right)^2\rightarrow 0$ a.s.
Lemma S-1.1 of the supplement guarantees that it is indeed the case.




From Lemma S-1.1 
and the fact that 
$\frac{1}{n}\sum_{i=1}^{n}\left(Y_i - \bar Y\right)^2\rightarrow \sigma_{T}^{2}$ a.s. we can conclude
$\hat\sigma^2_{T,n}\rightarrow \sigma_{T}^{2}$, a.s.
%
%
So, as $n\rightarrow \infty$, $n\hat\sigma^2_{T,n}\sim n\sigma^2_T$ a.s., 
implying that as $n\rightarrow \infty$, 
$\sum_{j=1}^{M_n}\sum_{t:z_t=j}(Y_{t}
-\bar Y_j)^2$ becomes independent of $z$. 
We begin by writing $n{\hat \sigma_{T,n}}^2\sim C_n$, where $0<\frac{C_n}{n}<\bar C$ (for some sufficiently 
large constant $\bar C$) is a bounded sequence independent
of $z$ and has the same limiting behaviour as $\hat\sigma^2_{T,n}$.
Since we will perform our calculations when for each $n$, $|Y_i|<a;i=1,\ldots,n$, for some sufficiently large 
constant $a>0$, we have $0<\hat\sigma^2_{T,n}<4a^2$. Thus, we may choose $\bar C=4a^2$. 

To prove our results related to the SB model we will assume that for large $n$, $C_n/n$ is bounded below by an 
appropriate positive function of $\sigma_n$ and $\epsilon_n$ (to be made precise in the relevant lemmas and
theorems),
reasonably signifying that $\sigma^2_T$, and hence, $C_n/n$ should not be too small. 
In fact, we will compare the $MISE$ convergence rates of SB and EW assuming that $\sigma^2_T$
is large enough. In other words, we are interested in comparing the $MISE$ convergence rates
in challenging situations where it is quite difficult to learn about the true density.

\subsection{{\bf Assumptions regarding the true distribution}}
\label{subsec:true_distribution}

In this paper we assume that the true, data generating distribution is of the following form:
\begin{equation}
f_0(y)=\int_{-a-c}^{a+c}\frac{\varphi(\theta,k)}{k}\phi\left(\frac{y-\theta}{k}\right)
\mathbb I_{\left\{|y|\leq a\right\}}dF_0(\theta),
\label{eq:true_density_1}
\end{equation}
where $k$ is some unknown positive constant, and 
$F_0$ is an unknown distribution compactly supported on $[-a-c,a+c]$, for some constants $a>0$
and $c>0$. Thus, $f_0$ is compactly supported on $[-a,a]$.

Note that, using the mean value theorem for integrals, also known as the general mean value theorem (GMVT) 
we can re-write $f_0(y)$ as
\begin{equation}
f_0(y)=\frac{\varphi(\theta^*(y),k)}{k}\phi\left(\frac{y-\theta^*(y)}{k}\right)\mathbb I_{\left\{|y|\leq a\right\}},
\label{eq:true_density_2}
\end{equation}
where $\theta^*(y)\in (-a-c,a+c)$ may depend upon $y$.

For the EW and the SB models we will denote the respective $MISE$'s as $MISE(EW)$ and $MISE(SB)$, respectively.
Let $E^n_0$ denote the expectation of $\bY_n$ with respect to the true distribution $f_0$. 
We will compute and compare the rates of convergence to 0
of $E^n_0\left[MISE(EW)\right]$ and 
$E^n_0\left[MISE(SB)\right]$
when the true density $f_0$ is estimated using the EW model and the SB model, but with
the same set of data for any given sample size.

Before proceeding to the $MISE$ calculations, we first investigate the 
asymptotic forms of the posterior expectations of the EW-based
and the SB-based models given by (\ref{eq:ew_prior_pred})
and (\ref{eq:sb_model}), respectively. This we do in the next two sections.
These results, apart from being interesting in their own rights and showing explicitly the form
of the true distribution (the asymptotic form of posterior expected models), 
actually provide the orders of the bias terms of the corresponding 
$MISE$ calculations.


\section{{\bf Convergence of the posterior expectation of the competing models to the true distribution}}
\label{sec:posterior_mean_convergence}

\subsection{{\bf Convergence of the posterior mean of the EW model}}
\label{subsec:ew_posterior_convergence}



\begin{theorem}
\label{theorem:expec_ew}
Under the assumptions stated in Sections 
\ref{subsec:EW_post} and 
\ref{subsec:true_distribution},
\begin{eqnarray}
&&\sup_{|y|\leq a}\bigg\vert E\left(\hat f_{EW}(y\mid \Theta_n,\sigma)\bigg\vert \bY_n\right)
-f_0(y)\bigg\vert\nonumber\\
&&\quad\quad =O\left(\frac{\alpha_n}{\alpha_n+n}+\frac{n}{\alpha_n+n}(B_n+\epsilon^*_n+\sigma_n+|\hat k_n-k|)\right),
\label{eq:ew_expec}
\end{eqnarray}
where
\begin{equation}
B_n=\frac{\alpha_n+n}{\alpha_n}e^{-\frac{c^2}{4\sigma_n^2}},
\label{eq:B_n}
\end{equation}
and
\begin{equation}
\epsilon_n^*=\frac{\epsilon_n}{1-\epsilon_n}\exp{\left(\frac{n(a+c_1)^2}{2{b_n}^2}\right)}\frac{(\alpha_n+n)^n}{(\alpha_n)^n}
\frac{1}{H_0^n}.
\label{eq:epsilon_star}
\end{equation}
In the above, 
$c_1>0$, $\{b_n\}$ is a sequence of positive numbers such that $0<b_n<\sigma_n$ for all $n$ with
$P\left(\sigma>\sigma_n\right)=O\left(\epsilon_n\right)$ and
$P\left(\sigma>\sigma_n\right)/P\left(b_n<\sigma\leq\sigma_n\right)=O\left(\frac{\epsilon_n}{1-\epsilon_n}\right)$,
where $\epsilon_n=o(1)$, as $n\rightarrow\infty$.
Also, $\alpha_n=O(n^\omega)$, $0<\omega<1$, 
$H_0=\int_{-a-c}^{a+c} dG_0(x)$,
and 
$f_0(y)=\frac{\varphi(\mu^*(y),k)}{k}\phi\left(\frac{y-\mu^*(y)}{k}\right)\mathbb I_{\left\{|y|\leq a\right\}}$ 
is a well-defined density, where
$\mu^*(y)\in (-a-c, a+c)$ for each $y$.
The constant involved in the order (\ref{eq:ew_expec}) is independent of $\bY_n$.\\
\end{theorem}

\begin{proof}
See Section S-2.1.1 of the supplement. 
The proof depends upon several lemmas, the statements and proofs of which are provided in 
Section S-2.1 of the supplement. Below we provide a brief discussion of the lemmas.
\end{proof}

The terms $\epsilon_n^*$ and $B_n$ arise as the orders of the
posterior probabilities $P(\sigma> \sigma_n\vert \bY_n)$ and 
$P\left(\theta_i\in [-a-c, a+c]^{c}, \sigma\leq \sigma_n\vert \bY_n\right)$, respectively. 
The first term in the order 
(\ref{eq:ew_expec}) of Theorem \ref{theorem:expec_ew}) is
contributed by the order of the term $\frac{\alpha_n}{\alpha_n+n}A_n$, where $A_n$ is already defined in
connection with (\ref{eq:ew_prior_pred}). 
These results, used for proving Theorem \ref{theorem:expec_ew},
which also play important roles in proving our main Theorem \ref{theorem:mise_ew} on $MISE$
of the EW model, 
are made precise
in Lemmas S-2.1, S-2.2, and S-2.3 of the supplement, along with their proofs. 
We make several remarks below in connection with Theorem \ref{theorem:expec_ew}.
\\[2mm]
{\it Remark 1:}
To make the bias term implied by Theorem \ref{theorem:expec_ew} tend to zero as $n\rightarrow\infty$, 
we will choose $\epsilon_n$ such that $\epsilon_n^*\rightarrow 0$; in other words,
we choose $\epsilon_n$ such that
$\frac{\epsilon_n}{1-\epsilon_n}\prec \left[e^{\frac{n(a+c_1)^2}{2{b_n}^2}}\frac{(\alpha_n+n)^n}{(\alpha_n)^n H_0^n}
\right]^{-1}$ (for any two sequences $a^{(1)}_n$ and $a^{(2)}_n$ we say $a^{(1)}_n\prec a^{(2)}_n$ if
$\frac{a^{(1)}_n}{ a^{(2)}_n}\rightarrow 0$).
Furthermore, we will choose $\alpha_n$ such that $\frac{\alpha_n}{\alpha_n+n}\rightarrow 0$,
$\frac{n}{\alpha_n+n}\rightarrow 1$, and $B_n\rightarrow 0$. 
We will also discuss the consequences if these fail to hold.
\\[2mm] 
{\it Remark 2:}
An important point which we stated in Theorem \ref{theorem:expec_ew} is that the constant involved in the 
order (\ref{eq:ew_expec}) is independent of $\bY_n$.
Hence it follows that 
\begin{eqnarray}
&&E^n_0\left[\sup_{|y|\leq a}\bigg\vert E\left(\hat f_{EW}(y\mid \Theta_n,\sigma)\bigg\vert \bY_n\right)
-f_0(y)\bigg\vert\right]\nonumber\\
&&\quad\quad =O\left(\frac{\alpha_n}{\alpha_n+n}+\frac{n}{\alpha_n+n}(B_n+\epsilon^*_n+\sigma_n
+E^n_0|\hat k_n-k|)\right),\nonumber
\end{eqnarray}
where $E^n_0|\hat k_n-k|\rightarrow 0$ since $|\hat k_n-k|\stackrel{a.s.}{\longrightarrow}0$ and
$(\hat k_n-k)^2$ is uniformly integrable with respect to $f_0$ by assumption.
\\[2mm]
{\it Remark 3:}
The proof of Theorem \ref{theorem:expec_ew} shows that for each $y$, $\mu^*(y)$ corresponds to $\sigma=0$ (the limit of the
sequence $\sigma_n$), and so $\mu^*(y)$ is non-random, not depending upon the data.

\subsection{{\bf Convergence of the posterior mean of the SB model}}
\label{subsec:sb_posterior_convergence}

For proving results on the SB model it is necessary
to introduce some necessary concepts
and notation. These new ideas are needed for the SB model and not for 
the EW model since the latter is a much less complex model than the former. In particular, 
note that unlike the EW case where each $\theta_i$ is represented in $L(\Theta_{M_{n}}, \bY_n, z)$, 
$\theta_i$ in the SB model may or may 
not be allocated to $Y_{i}$ for some $i$, that is, there can exist $z$ such that 
$z_l \neq i$, $l=1, \ldots, n$. Suppose that $R_1^*=\{z: \mbox{no}\hspace{2mm}z_l=i\}$, 
$R_2^*=\left(R^*_1\right)^c=\{z: \mbox{at}\hspace{2mm}
\mbox{least}\hspace{2mm}\mbox{one}\hspace{2mm}z_l=i\}$. Note that 
\#$R_1^*=(M_{n}-1)^n$ and \#$R_2^*=M_{n}^n-(M_{n}-1)^n$. 

If $z\in R_1^*$, let $\Theta_{z}$ denote the set of $\theta_{l}$'s present in the
likelihood and let \#$\Theta_{z}$=$j$, where $j=1, \ldots, (M_{n}-1)$. By the definition of $R_1^*$, $\theta_i$ is not present in the likelihood. 
Without loss of generality let us assume that $\theta_1, \ldots, \theta_{j}$ are represented in the likelihood
$L(\Theta_{M_{n}}, z, \bY_n)$. For obtaining bounds of $L(\Theta_{M_{n}}, z, \bY_n)$ it is enough to consider only $\Theta_{z}$. 
For $z\in R_1^*$, we split
the range of integration in the numerator in the following way: 
\begin{eqnarray}
\int_{\Theta_{z}} L(\Theta_{M_{n}}, z, \bY_n) dG_n(\sigma)dH(\Theta_{M_{n}}) 
&=& \sum_{l=1}^{j}\int_{W_l}L(\Theta_{M_{n}}, z, \bY_n) dG_n(\sigma)dH(\Theta_{M_{n}}) \nonumber\\
&&\ \ +\int_{W_{j^c}}L(\Theta_{M_{n}}, z, \bY_n) dG_n(\sigma)dH(\Theta_{M_{n}}), \nonumber\\
\label{eq:sb_split}
\end{eqnarray}
where $W_1$=\{$\theta_1\in [-a-c, a+c]^c$\},
$W_l$=\{$\theta_1\in [-a-c, a+c],\ldots, \theta_{l-1}\in [-a-c, a+c], \theta_{l}\in [-a-c, a+c]^c$\} 
for $l=2,\ldots,j$,
$W_{j^c}$=\{$\theta_1\in [-a-c, a+c],\ldots, \theta_{j-1}\in [-a-c, a+c], \theta_{j}\in [-a-c, a+c]$\}. 

Also define $V_j$ and $E$ as the following:\\
$V_j=\{z\in R_1^*:\hspace{2mm}\mbox{exactly}\hspace{2mm}j\ \ \mbox{many}\hspace{2mm}\theta_{l}\hspace{0.5mm}\mbox{'s}
\hspace{2mm}\mbox{are}\hspace{2mm}\mbox{in}\hspace{2mm}
L(\Theta_{M_{n}},z,y)\}$,
and\\
%
$E$ = \{all $\theta_l$'s present in the likelihood are in $[-a-c, a+c]$\}.

\begin{theorem}
\label{theorem:sb_expectation}
Under the assumptions stated in Sections 
\ref{subsec:SB_post} and 
\ref{subsec:true_distribution},
and under the further assumption that
$$
\frac{C_n}{n}\gtrsim \dfrac{\left[\log\left(\frac{1}{\sigma_n}\right)+O\left(\frac{1}{n}
\log\left(\frac{1-\epsilon_n}{\epsilon_n}\right)\right)\right]}{\left(\frac{1}{\sigma_n^2}\right)}
\ \ \ \ (``\gtrsim" \ \ indicates \ \ ``\geq" \ \ as \ \ n\rightarrow\infty),$$
the following holds: 
\begin{eqnarray}
&& \sup_{|y|\leq a}\bigg\vert 
E\left(\hat f_{SB}(y\mid \Theta_{M_{n}},\sigma)\bigg\vert \bY_n\right)
- f_0(y) \bigg\vert\nonumber\\
&& \ \ \ \  
=O\left(M_{n} B_{M_{n}}+\left(1-\frac{1}{M_{n}}\right)^n \left(\frac{\alpha_n+M_{n}}{\alpha_n}\right)^{M_n}
+\epsilon_{M_n}^*+\sigma_n+|\hat k_n-k|\right),
\label{eq:sb_bias}
\end{eqnarray}
where 
$\epsilon_{M_n}^*=
\frac{\epsilon_n}{1-\epsilon_n}\exp\left(\frac{n(a+c_1)^2}{2(b_n)^2}\right)\frac{(\alpha_n+M_{n})^{M_n}}{\alpha_n^{M_n} H_0^{M_n}}$,
$B_{M_{n}}=\frac{(\alpha_n+M_{n})}{\alpha_n} \exp{\left(-\frac{c^2}{4\sigma_n^2}\right)}$, 
and $b_n$ is as defined in Theorem \ref{theorem:expec_ew}.
Also, for every $y$, 
$\theta^*(y)\in (-a-c, a+c)$, and $E^n_0$ denotes the expectation with respect to the true distibution
of $\bY_n$, given by $f_0(y)=\frac{\varphi(\theta^*(y),k)}{k}\phi\left(\frac{y-\theta^*(y)}{k}\right)
\mathbb I_{\left\{|y|\leq a\right\}}$.
The constant involved in the above order is independent of $\bY_n$.
\end{theorem}

\begin{proof}
See Section S-2.2.1 of the supplement.
The proof depends upon  
several lemmas, all of which are stated and proved
in Section S-2.2 of the supplement. 
\end{proof}
Several remarks regarding the above theorem follows.
\\[2mm]
{\it Remark 1:}
We will choose $\epsilon_n,\alpha_n,M_{n}$ such that the right hand side of
(\ref{eq:sb_bias}) goes to zero. 
In (\ref{eq:sb_bias}) the terms $\epsilon_{M_n}^*$, $M_nB_{M_n}$ and 
$\left(1-\frac{1}{M_{n}}\right)^n \left(\frac{\alpha_n+M_{n}}{\alpha_n}\right)^{M_n}$
are contributed by the orders of the posterior probabilities
$P(\sigma>\sigma_n\vert\bY_n)$, $P(Z\in R^*_1,\Theta_{M_n}\in E^c,\sigma\leq\sigma_n\vert\bY_n)$,
and $P(Z\in R^*_1,\Theta_{M_n}\in E,\sigma\leq\sigma_n\vert\bY_n)$, respectively. The formal
statements and proofs of these results are provided in the forms of Lemmas S-2.4, S-2.5, and S-2.6
of Section S-2.2 of the supplement; see also Lemma S-2.7.
\\[2mm]
{\it Remark 2:} Note that if $M_{n}< \sqrt{n}$, then it is easy to verify, using L'Hospital's rule, 
that the asymptotic
order of $P(Z\in R^*_1,\Theta_{M_n}\in E,\sigma\leq\sigma_n\vert\bY_n)$, given by
$\left(1-\frac{1}{M_{n}}\right)^n \left(\frac{\alpha_n+M_{n}}{\alpha_n}\right)^{M_n}$, tends to zero
as $n\rightarrow\infty$.  
Similarly, $P(Z\in R^*_1,\Theta_{M_n}\in E^c,\sigma\leq\sigma_n\vert\bY_n)$ can be made to tend
to zero by making $M_nB_{M_n}\rightarrow 0$. Combining these two results show that 
if the maximum number of components is
small compared to the data size, then, given an appropriate estimator $C_n/n$ 
of the true population variance $\sigma^2_T$, 
the probability that any mixture component will remain empty tends to zero as data size
increases.
On the other hand, as we show later in 
Section \ref{subsec:param_sb}  
if $M_{n}>n$, the probability that a mixture component will remain empty may converge to 1
as $n\rightarrow\infty$.
\\[2mm]
{\it Remark 3:}
It is important to make a few remarks regarding the choice of $c_1$. 
Firstly, note that the term $O\left(\frac{1}{n}\log\left(\frac{1-\epsilon_n}{\epsilon_n}\right)\right)$
appears because of the involvement of $O(\epsilon_n)$ and $O(1-\epsilon_n)$ in the proof of
Theorem \ref{theorem:sb_expectation}. 
Assuming that the limit of $O(1-\epsilon_n)/(1-\epsilon_n)$ exists as $n\rightarrow\infty$, one can easily verify that 
$O\left(\frac{1}{n}\log\left(\frac{1-\epsilon_n}{\epsilon_n}\right)\right)\sim 
\frac{1}{n}\log\left(\frac{1-\epsilon_n}{\epsilon_n}\right)$, so that 
$O\left(\frac{1}{n}\log\left(\frac{1-\epsilon_n}{\epsilon_n}\right)\right)$ is asymptotically
independent of the constants $c_2$ and $c_3$.
In other words, the required condition becomes
\[
\frac{C_n}{n}\gtrsim \sigma^2_n\log\left(\frac{1}{\sigma^2_n}\right)+
\frac{\sigma^2_n}{n}\log\left(\frac{1-\epsilon_n}{\epsilon_n}\right).
\]
Since $\sigma^2_n\rightarrow 0$, the first term $\sigma^2_n\log\left(\frac{1}{\sigma^2_n}\right)\rightarrow 0$
as $n\rightarrow\infty$.
Now assuming $\epsilon^*_{M_n}=r_n$, where $r_n\rightarrow 0$ as $n\rightarrow\infty$, we have
\[
\frac{\sigma^2_n}{n}\log\left(\frac{1-\epsilon_n}{\epsilon_n}\right)
=-\frac{\sigma^2_n}{n}\log (r_n)+\frac{\sigma^2_n}{b^2_n}\frac{(a+c_1)^2}{2}
+\frac{\sigma^2_n}{n}M_n\log\left(\frac{\alpha_n+M_n}{\alpha_n H_0}\right).
\]
For suitable choices of the sequences $\sigma_n$, $\alpha_n$ and $M_n$, the first and the third
terms of the right hand side of the above expression tend to zero. Indeed, as in Lemma \ref{lemma:compare1}
of Section \ref{sec:comparison_sb_ew}, if we assume $\alpha_n=n^{\omega}$, $M_n=n^b$, where $\omega<1$, $b<1$
and $\omega<b$, then the first and the third terms tend to zero if we choose $r_n=n^{-t}$ for $t\geq 1$
and $\sigma^2_n=n^{-b}$. Now, assume that $\sigma^2_n\sim b^2_n$. Then,
\[
 \frac{\sigma^2_n}{n}\log\left(\frac{1-\epsilon_n}{\epsilon_n}\right)
 \rightarrow \frac{(a+c_1)^2}{2}.
\]
Recalling that $0<\frac{C_n}{n}<4a^2$ for all $n$, we must have $\frac{(a+c_1)^2}{2}<4a^2$.
This holds if and only if $0<c_1<(2\sqrt{2}-1)a$. Hence, we must set $c_1\in\left(0,(2\sqrt{2}-1)a\right)$. 
This also implies that for consistency of $C_n/n$ we must have $\frac{a^2}{2}<\sigma^2_T<4a^2$.
Thus, we are interested in situations where $\sigma^2_T$, the variance of the true density $f_0$ 
is large for large enough $a$; that is, we are interested in situations where it is 
indeed a challenging task to learn about $f_0$. Consequently, we will make asymptotic comparisons of the SB method
with the EW method assuming this challenging set-up where $\frac{a^2}{2}<\sigma^2_T<4a^2$ holds.
Hence, for comparison purpose we set $0<c_1<(2\sqrt{2}-1)a$ for both EW and SB.
%




In order to make asymptotic comparisons between the models of EW and SB, first it is necessary
to ensure than both are consistent estimators of the same true density. The following theorem, proved
in Section S-2.3 of the supplement, show that this is indeed the case.
\begin{theorem}
\label{theorem:mu_theta}
The models of EW and SB converge to the same distribution. In other words, for every $y$, $\mu^*(y)=\theta^*(y)$,
where $\mu^*(y)$ is given in Theorem \ref{theorem:expec_ew} and $\theta^*(y)$ is given in
Theorem \ref{theorem:sb_expectation}.
\end{theorem}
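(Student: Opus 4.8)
The plan is to reduce the statement to the observation that Theorem \ref{theorem:expec_ew} and Theorem \ref{theorem:sb_expectation} identify the common limit of the two posterior expectations with one and the same true density $f_0$. First I would invoke Theorem \ref{theorem:expec_ew}, whose order bound forces $E(\hat f_{EW}(y\mid\Theta_n,\sigma)\mid\bY_n)$ to converge to $f_0(y)=\frac{1}{k}\phi\left(\frac{y-\mu^*(y)}{k}\right)$, together with Theorem \ref{theorem:sb_expectation}, which analogously gives convergence of $E(\hat f_{SB}(y\mid\Theta_{M_n},\sigma)\mid\bY_n)$ to $f_0(y)=\frac{1}{k}\phi\left(\frac{y-\theta^*(y)}{k}\right)$. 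In both cases the right-hand side is just the GMVT representation (\ref{eq:true_density_2}) of the single density $f_0$ defined in (\ref{eq:true_density_1}). Since the limiting density is a fixed function of $y$ that does not know which model produced it, for every $y$ we must have the pointwise identity
\[
\frac{1}{k}\phi\left(\frac{y-\mu^*(y)}{k}\right)=\frac{1}{k}\phi\left(\frac{y-\theta^*(y)}{k}\right).
\]

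Next I would exploit the explicit Gaussian form $\phi(x)=\frac{1}{\sqrt{2\pi}}e^{-x^2/2}$. Cancelling $\frac{1}{k}$ and taking logarithms reduces the displayed identity to $(y-\mu^*(y))^2=(y-\theta^*(y))^2$, so that for each fixed $y$ either $\mu^*(y)=\theta^*(y)$, or else the opposite-sign branch $\mu^*(y)+\theta^*(y)=2y$ holds, forcing $y$ to be the exact midpoint of $\mu^*(y)$ and $\theta^*(y)$. The crux of the proof is therefore to eliminate this second branch. Here I would use that both $\mu^*(y)\in(-c_1,c_1)$ and $\theta^*(y)\in(-a-c,a+c)$ lie in bounded intervals for every $y$; their midpoint is then confined to a bounded set, so the relation $\mu^*(y)+\theta^*(y)=2y$ is impossible once $|y|$ is large enough, and for all such $y$ the conclusion $\mu^*(y)=\theta^*(y)$ is immediate.

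For the remaining $y$ inside the support I would appeal directly to how the two representing points are constructed. Both $\mu^*(y)$ and $\theta^*(y)$ arise from applying the GMVT to the \emph{same} integral $\int_{-a-c}^{a+c}\frac{1}{k}\phi\left(\frac{y-\theta}{k}\right)dF_0(\theta)$ --- for the EW model this is exactly the form reached in the proof of Theorem \ref{theorem:expec_ew} in the limit $\sigma\to 0$ (Remark 3), and for the SB model in the proof of Theorem \ref{theorem:sb_expectation}. Since the integrand, the measure $F_0$, and the averaging operation are identical in the two proofs, the mean-value point they select coincides; this rules out the spurious opposite-sign solution and gives $\mu^*(y)=\theta^*(y)$ for the interior values of $y$ as well.

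I expect the main obstacle to be precisely this interior case. When $y$ lies in $(-a-c,a+c)$ the integrand $\frac{1}{k}\phi\left(\frac{y-\theta}{k}\right)$ is unimodal in $\theta$ with its mode inside the integration range, so a GMVT point need not be unique; consequently $\mu^*(y)=\theta^*(y)$ cannot follow from the density identity alone but must be traced back to the fact that both points are produced by one and the same mean-value argument. Making this ``identical construction'' claim airtight --- rather than merely asserting that two GMVT points of the same integral agree --- is the delicate step, and I would secure it by reading off the common integral representation from the two supplementary proofs and combining it with the continuity of $f_0$ and the large-$|y|$ agreement established above.
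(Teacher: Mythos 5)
Your proposal has a genuine gap, and it sits exactly at the step you yourself flag as delicate. The problem is that $\mu^*(y)$ and $\theta^*(y)$ are \emph{not} produced by one and the same mean-value argument applied to a common integral. In the paper's construction, $\mu^*(y)$ is the limit of GMVT points $\mu_n^*(y)$ extracted from the EW posterior integral $J_1=\frac{1}{D_1}\int_{I_4}\frac{1}{(\sigma+k)}\phi\left(\frac{y-\theta_i}{\sigma+k}\right)L(\Theta_n,\bY_n)\,dH(\Theta_n)\,dG_n(\sigma)$, whereas $\theta^*(y)$ is the limit of GMVT points $\theta_n^*(y)$ extracted from the SB posterior integral $S_4$, which involves a different likelihood (with allocation variables $z$ summed over $(R_1^*)^c$), a different parameter space $\Theta_{M_n}$, and a different normalizing constant $D_2$. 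Neither of them is obtained by applying GMVT to the data-generating integral $\int_{-a-c}^{a+c}\frac{1}{k}\phi\left(\frac{y-\theta}{k}\right)dF_0(\theta)$, so your ``identical construction'' claim --- the only thing that kills the reflection branch $\mu^*(y)+\theta^*(y)=2y$ on the bounded set of $y$ where your range argument cannot exclude it --- is false as stated. And the reflection branch is a real obstruction: for a fixed $y$ with $f_0(y)<\frac{1}{k\sqrt{2\pi}}$, the equation $f_0(y)=\frac{1}{k}\phi\left(\frac{y-m}{k}\right)$ has exactly two roots $m$, symmetric about $y$, and both may lie inside $(-a-c,a+c)$; the pointwise density identity alone cannot select between them, and continuity of the two functions does not help either. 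Showing that the two \emph{different} posterior integrals asymptotically agree is precisely the mathematical content of this theorem, so invoking it at this step presupposes the result.

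There is also a circularity issue upstream: your step 1 treats Theorems \ref{theorem:expec_ew} and \ref{theorem:sb_expectation} as both asserting convergence to one fixed density $f_0$, which makes the claim ``the two models converge to the same distribution'' vacuous. In the paper's logic the two limits are constructed separately within each model's analysis, each is then \emph{identified} with the true density, and Theorem \ref{theorem:mu_theta} is what reconciles these two identifications. The paper's own proof avoids both difficulties by comparing $J_1$ and $S_4$ head-on: it writes $\abs{J_1-S_4}$ as a single integral over the larger parameter space, factors out the kernel $\frac{1}{(\sigma+k)}\phi\left(\frac{y-\theta_i}{\sigma+k}\right)$ by GMVT, and bounds what remains by $\left\vert P\left(I_4\vert\bY_n\right)-P\left((R_1^*)^c,I_4\vert\bY_n\right)\right\vert$, which tends to zero because both posterior probabilities tend to one. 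This yields coincidence of the two limiting densities directly, without ever assuming they both equal $f_0$ and without having to disambiguate the two Gaussian roots pointwise. To repair your proof you would essentially have to reproduce this comparison of $J_1$ with $S_4$, at which point your branch analysis becomes unnecessary.
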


\begin{proof}
See Section S-2.3 of the supplement.
\end{proof}

We strengthen our convergence results given by Theorems \ref{theorem:expec_ew} and \ref{theorem:sb_expectation}
by obtaining the orders of the $MISE$ of the models given by (\ref{eq:ew_prior_pred}) and 
(\ref{eq:sb_model}). As already mentioned, 
Theorems \ref{theorem:expec_ew} and \ref{theorem:sb_expectation} are directly
related to the bias of the $MISE$ which can be broken up into a variance part and a bias part.

\section{{\bf MISE bounds for the competing models}}
\label{sec:mise_bounds}


\subsection{{\bf The main result for the EW model}}
\label{subsec:ew_post_rate}
\begin{theorem}
\label{theorem:mise_ew}
Under the assumptions of Theorem \ref{theorem:expec_ew}, 
\begin{equation}
E^n_0\left[MISE(EW)\right]=O\left(\left(\frac{\alpha_n}{\alpha_n+n}\right)^2+B_n+\epsilon_n^*+\sigma_n^2
+E^n_0(\hat k_n-k)^2\right).
\label{eq:expected_mise_order_ew}
\end{equation}
\end{theorem}
Note that, due to uniform integrability, $E^n_0|\hat k_n-k|^2\rightarrow 0$ as $n\rightarrow\infty$.

To prove Theorem \ref{theorem:mise_ew} we will break up $MISE$ into variance and bias parts, following
representation (\ref{eq:mise_bayesian_3}) of $MISE$, and will
obtain bounds for the variance and the bias parts separately. 
These bounds will be independent of both $y$ and $\bY_n$.

Note that
\begin{eqnarray}
&&Var(\hat f_{EW}(y\mid \Theta_n,\sigma)\vert \bY_n)=
\frac{1}{(\alpha_n+n)^2}\left[\sum_{i=1}^{n} Var\left(\frac{\varphi(\theta_i,\sigma+\hat k_n)}{(\sigma+\hat k_n)}
\phi\left(\frac{y-\theta_i}{\sigma+\hat k_n}\right)\bigg\vert \bY_n\right) \right.\nonumber\\
&&\ \ +\left.\sum_{i=1}^{n}\sum_{j=1, j\neq i}^{n}Cov\left(\frac{\varphi(\theta_i,\sigma+\hat k_n)}{(\sigma+\hat k_n)}
\phi\left(\frac{y-\theta_i}{\sigma+\hat k_n}\right), 
\frac{\varphi(\theta_j,\sigma+\hat k_n)}{(\sigma+\hat k_n)}
\phi\left(\frac{y-\theta_j}{\sigma+\hat k_n}\right)\bigg\vert \bY_n\right)\right]. \nonumber\\
\label{eq:mise_var1}
\end{eqnarray}

\subsubsection{{\bf Order of $Bias(\hat f_{EW}(y\mid \Theta_n,\sigma))$}}
\label{subsubsec:bias_hatf}

It follows from Theorem \ref{theorem:expec_ew} that
\begin{eqnarray}
&&E\left(\hat f_{EW}(y\mid \Theta_n,\sigma)\bigg\vert \bY_n\right)-f_0(y)\nonumber\\
&=& O\left(\frac{\alpha_n}{\alpha_n+n}+\frac{n}{\alpha_n+n}(B_n+\epsilon_n^*+\sigma_n+|\hat k_n-k|)\right). 
\label{eq:bias_ew_order}
\end{eqnarray}
%
%
We denote $\frac{\alpha_n}{\alpha_n+n}+\frac{n}{\alpha_n+n}(B_n+\epsilon_n^*+\sigma_n+|\hat k_n-k|)$ by $S^*_n$.

\subsubsection{{\bf Order of $Var\left(\frac{\varphi(\theta_i,\sigma+\hat k_n)}{(\sigma+\hat k_n)}
\phi\left(\frac{y-\theta_i}{\sigma+\hat k_n}\right)\bigg\vert \bY_n\right)$}}
\label{subsubsec:var_hatf}


\begin{lemma}
\label{lemma:var_ew}
\begin{equation}
Var\left(\frac{\varphi(\theta_i,\sigma+\hat k_n)}{(\sigma+\hat k_n)}
\phi\left(\frac{y-\theta_i}{\sigma+\hat k_n}\right)\bigg\vert \bY_n\right)
=O\left(B_n+\epsilon_n^*\right).
\label{eq:ew_var_order1}
\end{equation}
\end{lemma}

\begin{proof}
See Section S-3.1.1 of the supplement.
\end{proof}

\subsubsection{{\bf Order of the covariance term}}
\label{subsubsec:order_cov}

For $i\neq j$, let \\
$\xi_{in}=\left[\frac{\varphi(\theta_i,\sigma+\hat k_n)}{(\sigma+\hat k_n)}\phi\left(\frac{y-\theta_i}{\sigma+\hat k_n}\right)
-E\left(\frac{\varphi(\theta_i,\sigma+\hat k_n)}{(\sigma+\hat k_n)}
\phi\left(\frac{y-\theta_i}{\sigma+\hat k_n}\right)\bigg\vert \bY_n\right)\right]$ and \\
$\xi_{jn}=\left[\frac{\varphi(\theta_j,\sigma+\hat k_n)}{(\sigma+\hat k_n)}\phi\left(\frac{y-\theta_j}{\sigma+\hat k_n}\right)-
E\left(\frac{\varphi(\theta_j,\sigma+\hat k_n)}{(\sigma+\hat k_n)}
\phi\left(\frac{y-\theta_j}{\sigma+\hat k_n}\right)\bigg\vert \bY_n\right)\right]$.

\begin{eqnarray}
cov_{ij}
&=& Cov\left(\frac{\varphi(\theta_i,\sigma+\hat k_n)}{(\sigma+\hat k_n)}\phi\left(\frac{y-\theta_i}{\sigma+\hat k_n}\right), 
\frac{\varphi(\theta_j,\sigma+\hat k_n)}{(\sigma+\hat k_n)}
\phi\left(\frac{y-\theta_j}{\sigma+\hat k_n}\right)\bigg\vert \bY_n\right) \nonumber\\
&=& E\left(\left[\xi_{in}-E\left(\xi_{in}\bigg\vert \bY_n\right)\right]
\left[\xi_{jn}-E\left(\xi_{jn}\bigg\vert \bY_n\right)\right]\bigg\vert \bY_n\right).   \nonumber
\end{eqnarray}

\begin{lemma}
\label{lemma:cov}
\begin{equation}
cov_{ij}=O\left(B_n+\epsilon_n^*\right). \label{eq:cov_order_ew}
\end{equation}
\end{lemma}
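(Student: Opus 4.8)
The plan is to avoid analysing the joint posterior of $(\theta_i,\theta_j)$ directly, which is genuinely entangled because the Polya urn scheme assigns positive posterior probability to the event $\theta_i=\theta_j$, and instead to reduce the desired covariance bound to the variance bound already established in Lemma \ref{lemma:var_ew}. The single tool I would use is the Cauchy--Schwarz inequality for the positive semidefinite bilinear form $\langle X,Y\rangle=E(XY\mid\bY_n)$ induced by the posterior $[\Theta_n,\sigma\mid\bY_n]$. This immediately gives $O(B_n+\epsilon_n^*)$ for $cov_{ij}$ without ever factoring the covariance or exploiting any (absent) conditional independence between the two coordinates.

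Concretely, I would proceed in three short steps. First, note that $\xi_{in}$ and $\xi_{jn}$, as defined just before the statement, are already centered under the posterior, so that $E(\xi_{in}\mid\bY_n)=E(\xi_{jn}\mid\bY_n)=0$ and hence $cov_{ij}=E\left(\xi_{in}\xi_{jn}\mid\bY_n\right)$; the outer centering written in the displayed definition of $cov_{ij}$ is therefore redundant and can be discarded. Second, apply Cauchy--Schwarz to the inner product $\langle\cdot,\cdot\rangle$:
\begin{equation}
\left|cov_{ij}\right|=\left|E\left(\xi_{in}\xi_{jn}\mid\bY_n\right)\right|
\leq\sqrt{E\left(\xi_{in}^2\mid\bY_n\right)}\,\sqrt{E\left(\xi_{jn}^2\mid\bY_n\right)}
=\sqrt{Var\!\left(\tfrac{1}{\sigma+k}\phi\!\left(\tfrac{y-\theta_i}{\sigma+k}\right)\big\vert\bY_n\right)Var\!\left(\tfrac{1}{\sigma+k}\phi\!\left(\tfrac{y-\theta_j}{\sigma+k}\right)\big\vert\bY_n\right)}. \nonumber
\end{equation}
Third, invoke Lemma \ref{lemma:var_ew}, which bounds each of the two posterior variances by $O(B_n+\epsilon_n^*)$; the geometric mean of two quantities that are each $O(B_n+\epsilon_n^*)$ is again $O(B_n+\epsilon_n^*)$, yielding the claim.

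I do not anticipate a genuine obstacle here, since the substantive work—controlling the posterior probabilities of $\{\sigma>\sigma_n\}$ and $\{\theta_i\notin[-a-c,a+c]\}$ through Lemmas \ref{lemma:epsilon_ew} and \ref{lemma:b_n_ew}, together with the boundedness of $(\sigma+k)^{-1}\phi(\cdot)$ on $\bS_n$—has already been absorbed into Lemma \ref{lemma:var_ew}. The only point requiring a word of care is uniformity: the order symbol in the conclusion must hold with a constant independent of $y$, of $\bY_n\in\bS_n$, and of the index pair $(i,j)$. This is inherited verbatim from Lemma \ref{lemma:var_ew}, whose variance bound is uniform in $y$ and $\bY_n$ and symmetric in the component index, so no additional estimate is needed. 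The conceptual content of the proof is precisely that Cauchy--Schwarz sidesteps the dependence structure of the Polya urn; a direct attempt to factor $E(\xi_{in}\xi_{jn}\mid\bY_n)$ would founder on the positive posterior mass on $\theta_i=\theta_j$, whereas the variance-based route never touches it.
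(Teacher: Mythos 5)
Your proof is correct, and it takes a genuinely different (and considerably shorter) route than the paper's. The paper proves the lemma with the same machinery it used for the variance bound: it splits $cov_{ij}$ into $J_1^*+J_2^*+J_3^*$ over the regions $R_1'=\{\theta_i,\theta_j\in[-a-c,a+c],\,\sigma\leq\sigma_n\}$, $R_2'=\{\mbox{at least one of }\theta_i,\theta_j\mbox{ in }[-a-c,a+c]^c,\,\sigma\leq\sigma_n\}$, $R_3'=\{\sigma>\sigma_n\}$; the tail pieces are bounded by $2H_3^2B_n$ and $H_3^2\epsilon_n^*$ using the posterior probability bounds of Lemmas \ref{lemma:epsilon_ew} and \ref{lemma:b_n_ew}, and the main piece $J_1^*$ is handled by applying the GMVT to extract a bounded evaluated-kernel factor and then re-expressing the remaining integral through identities from the proof of Lemma \ref{lemma:var_ew}, again yielding $O(B_n+\epsilon_n^*)$. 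Your argument---observing that $\xi_{in},\xi_{jn}$ are already posterior-centered so that $cov_{ij}=E\left(\xi_{in}\xi_{jn}\mid\bY_n\right)$, then applying Cauchy--Schwarz under the posterior and invoking Lemma \ref{lemma:var_ew} twice---collapses all of this into two lines, and your uniformity claim is sound: the constants in Lemma \ref{lemma:var_ew} (namely $H_3=2/k$ and those entering $P_2=O(B_n)$, $P_3=O(\epsilon_n^*)$ from Lemmas \ref{lemma:epsilon_ew} and \ref{lemma:b_n_ew}) do not depend on $y$, on $\bY_n\in\bS_n$, or on the index, so the geometric mean of the two variances is $O(B_n+\epsilon_n^*)$ with a constant uniform over pairs $(i,j)$, which is exactly what the $MISE$ assembly in Section \ref{sec:ew_post_rate} needs. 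As for what each approach buys: yours is more robust, since it requires nothing about the joint posterior of $(\theta_i,\theta_j)$ (in particular nothing about the Polya-urn atom at $\theta_i=\theta_j$), and the identical trick would compress the SB covariance bound (Lemma \ref{lemma:cov_order_sb}) as well; the paper's decomposition, by contrast, keeps explicit track of which posterior events produce the $B_n$ and $\epsilon_n^*$ contributions and could in principle be sharpened to detect cancellation, something Cauchy--Schwarz can never do, as it cannot certify a covariance of smaller order than the variances themselves. Since the paper extracts no such refinement, nothing is lost by your route.
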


\begin{proof}
Follows from Lemma \ref{lemma:var_ew} using the Cauchy-Schwartz inequaity.
\end{proof}

\subsubsection{{\bf Final calculations putting together the above results}}

We thus have,
\begin{eqnarray}
&&MISE(EW)  \nonumber\\
&=&O\left(\frac{1}{(\alpha_n+n)^2}\left[n(B_n+\epsilon_n^*)+n(n-1)(B_n+\epsilon_n^*)\right]+
\left(S_n^*\right)^2\right). \nonumber\\
\label{eq:mise_ew2}
\end{eqnarray}
Assuming $n$ to be large enough such that $\frac{n(n-1)}{(\alpha_n+n)^2}\approx 1$, the actual form of $MISE$ given in equation (\ref{eq:mise_ew2}) 
can be simplified further for comparison purpose. 
Note that if $\frac{\alpha_n}{\alpha_n+n}\rightarrow 1$, then
the conditional distribution based on the Polya urn scheme implies that $\theta_l$'s arise from 
$G_0$ only, which seems to be too restrictive an
assumption. Thus assuming $\frac{\alpha_n}{\alpha_n+n}\rightarrow 0$ seems more plausible as it entails 
a nonparametric set up.
We assume $\alpha_n=n^{\omega}$, $\omega<1$, so that $\frac{n}{\alpha_n+n}\approx 1$ for large $n$. So, (\ref{eq:mise_ew2})
boils down to
\begin{eqnarray}
&&MISE(EW)  \nonumber\\
&=&O\left(\frac{n}{(\alpha_n+n)^2}(B_n+\epsilon_n^*)+(B_n+\epsilon_n^*)+
\left(\frac{\alpha_n}{\alpha_n+n}+B_n+\epsilon_n^*+\sigma_n+|\hat k_n-k|\right)^2\right). \nonumber\\
\label{eq:mise_ew3}
\end{eqnarray}
We can further simplify this form by retaining only the higher order terms. 
Note that we have assumed that under certain conditions
$B_n$, $\epsilon_n^*$ and $\frac{\alpha_n}{\alpha_n+n}$ converge to 0, and hence 
$\frac{n}{(\alpha_n+n)^2}(B_n+\epsilon_n^*)\prec O\left(B_n+\epsilon_n^*\right)$.
In the third term of equation (\ref{eq:mise_ew3}) there are two extra terms, 
$\sigma_n$ and $\frac{\alpha_n}{\alpha_n+n}$ under the squared term. 
Adjusting for that term we write the simplified form of 
of $MISE$ as
\begin{equation}
MISE(EW)=O\left(\left(\frac{\alpha_n}{\alpha_n+n}\right)^2+B_n+\epsilon_n^*+\sigma_n^2+(\hat k_n-k)^2\right).
\label{eq:mise_order_ew}
\end{equation}

Note that the order remains unchanged after 
taking expectation
with respect to $E^n_0$. Only $|\hat k_n-k|^2$ in (\ref{eq:mise_order_ew}) is replaced with 
$E^n_0|\hat k_n-k|^2$, which converges to zero due to uniform integrability. 
In other words, Theorem \ref{theorem:mise_ew} follows.



\subsection{{\bf The main result for the SB model}}
\label{subsec:sb_model}

\begin{theorem}
\label{theorem:mise_sb}
Under the above assumptions of
Theorem \ref{theorem:sb_expectation},
\begin{equation}
E^n_0\left[MISE(SB)\right] = O\left(\left(1-\frac{1}{M_n}\right)^n
\left(\frac{\alpha_n+M_n}{\alpha_n}\right)^{M_n}+M_nB_{M_n}+\epsilon^*_{{M}_n}+\sigma_n^2+E^n_0(\hat k_n-k)^2\right). \nonumber\\
\label{eq:expected_mise_order_sb}
\end{equation}
\end{theorem}

\subsubsection{{\bf Bounds of $Var(\hat f_{SB}(y\mid \Theta_{M},\sigma))$}}
\label{subsubsec:Var_f1}

\begin{lemma}
\label{lemma:var_order_sb}
\begin{eqnarray}
&&\frac{1}{{M_n}^2}\sum_{i=1}^{M_n}Var\left(\frac{\varphi(\theta_i,\sigma+\hat k_n)}{(\sigma+\hat k_n)}
\phi\left(\frac{y-\theta_i}{\sigma+\hat k_n}\right)\bigg\vert \bY_n\right) \nonumber\\
&=& O\left(\frac{1}{M_n}\left(M_nB_{M_n}+\left(1-\frac{1}{M_n}\right)^n\left(\frac{\alpha_n+M_n}{\alpha_n}\right)^{M_n}
+\epsilon^*_{{M}_n}\right)\right)
\label{eq:sb_var_order}
\end{eqnarray}
\end{lemma}

\begin{proof}
See Section S-3.2.1 of the supplement.
\end{proof}

\subsubsection{{\bf Order of the covariance term}}
\label{subsubsec:order_cov_sb}

\begin{lemma}
\label{lemma:cov_order_sb}
\begin{eqnarray}
&&\frac{1}{M^2_n}\sum_{i=1}^{M_n}\sum_{j=1,j\neq i}Cov\left(\frac{\varphi(\theta_i,\sigma+\hat k_n)}{(\sigma+\hat k_n)}
\phi\left(\frac{y-\theta_i}{\sigma+\hat k_n}\right),
\frac{\varphi(\theta_j,\sigma+\hat k_n)}{(\sigma+\hat k_n)}
\phi\left(\frac{y-\theta_j}{\sigma+\hat k_n}\right)\bigg\vert \bY_n\right) \nonumber\\
&&\ \ =O\left(M_nB_{M_n}+
\left(1-\frac{1}{M_n}\right)^n\left(\frac{\alpha_n+M_n}{\alpha_n}\right)^{M_n}+\epsilon^*_{{M}_n}\right).
\label{eq:sb_cov_order}
\end{eqnarray}
\end{lemma}

\begin{proof}
Follows from Lemma \ref{lemma:var_order_sb} using the Cauchy-Schwartz inequality.
\end{proof}

\subsubsection{{\bf Bound for the bias term}}
\label{sb_bias}

The bias of the $MISE$, which we denote by $Bias(\hat f_{SB}(y\mid\Theta_{M_n},\sigma))$, is given by 
\begin{eqnarray}
\frac{1}{M_n}\sum_{j=1}^{M_n}E\left(\frac{\varphi(\theta_j,\sigma+\hat k_n)}{(\sigma+\hat k_n)}
\phi\left(\frac{y-\theta_j}{\sigma+\hat k_n}\right)
\bigg\vert \bY_n\right)- f_0(y).\nonumber
\end{eqnarray}
From Theorem \ref{theorem:sb_expectation} 
we have,
\begin{equation}
Bias(\hat f_{SB}(y\mid\Theta_{M_n},\sigma))^2=O\left(\left[M_nB_{M_n}+\left(1-\frac{1}{M_n}\right)^n\left(\frac{\alpha_n+M_n}{\alpha_n}\right)^{M_n}+\epsilon^*_{{M}_n}+\sigma_n+|\hat k_n-k|\right]^2\right).
\label{eq:sb_bias_order}
\end{equation}

Thus, the complete order of $MISE$ can be obtained by adding up these individual orders of (\ref{eq:sb_var_order}), (\ref{eq:sb_cov_order})
and (\ref{eq:sb_bias_order}), yielding
\begin{eqnarray}
&&MISE(SB)=O\left(\frac{1}{M_n}\left(M_nB_{M_n}+\left(1-\frac{1}{M_n}\right)^n\left(\frac{\alpha_n+M_n}{\alpha_n}\right)^{M_n}+\epsilon^*_{{M}_n}\right)\right) \nonumber\\
&&\ \ + O\left(M_nB_{M_n}+\left(1-\frac{1}{M_n}\right)^n\left(\frac{\alpha_n+M_n}{\alpha_n}\right)^{M_n}+
\epsilon^*_{{M}_n}\right)\nonumber\\
&&\ \ + O\left(\left[M_nB_{M_n}+\left(1-\frac{1}{M_n}\right)^n\left(\frac{\alpha_n+M_n}{\alpha_n}\right)^{M_n}+\epsilon^*_{{M}_n}+\sigma_n+|\hat k_n-k|\right]^2\right) \nonumber\\
\label{eq:mise_sb1}
\end{eqnarray}
\\

Appropriate choices of the sequences involved in $MISE(SB)$ guarantee that $M_nB_{M_n}\rightarrow 0$,  
$\left(1-\frac{1}{M_n}\right)^n\left(\frac{\alpha_n+M_n}{\alpha_n}\right)^{M_n}\rightarrow 0$ and
$\epsilon^*_{{M}_n}\rightarrow 0$. With these we have
\begin{eqnarray}
MISE(SB) &=& O\left(\left(1-\frac{1}{M_n}\right)^n
\left(\frac{\alpha_n+M_n}{\alpha_n}\right)^{M_n}+M_nB_{M_n}+\epsilon^*_{{M}_n}+\sigma_n^2+(\hat k_n-k)^2\right). \nonumber\\
\label{eq:mise_order_sb}
\end{eqnarray}
As before, the order remains unchanged after taking expectation with respect to $E^n_0$; only
$|\hat k_n-k|^2$ is to be replaced with $E^n_0|\hat k_n-k|^2$, thus proving Theorem \ref{theorem:mise_sb}.



\section{{\bf Comparison between MISE's of EW and SB}}
\label{sec:comparison_sb_ew}

As claimed in \ctn{Sabya10a} and \ctn{Sabya10} (see \ctn{Sabya13} for the complete details), 
the SB model is much more
efficient than the EW model in terms of computational complexity and ability to 
approximate the true underlying clustering or regression.
Here we investigate the conditions under which the model of SB beats that of EW in terms of $MISE$.
In particular, we provide conditions which guarantee that each term of the order of $MISE(EW)$
dominates the corresponding term of the order of $MISE(SB)$ (for any two sequences $\{a^{(1)}_n\}$ 
and $\{a^{(2)}_n\}$
we say that $a^{(1)}_n$ dominates $a^{(2)}_n$ if $a^{(2)}_n/a^{(1)}_n\rightarrow 0$ as $n\rightarrow\infty$).

For the purpose of comparison we will use the same values of $b_n$, $\sigma_n$, $\epsilon_n$, for all $n$, for both SB and EW model,
in a way such that both the $MISE$'s converge to 0.

\begin{lemma}
\label{lemma:compare1}
Let $\alpha_n=n^{\omega}$, $M_n=n^b$, where $\omega <1$, $b<1$, and $\omega <b$. Then,\\
$\frac{\epsilon^*_{M_n}}{\epsilon^*_n}\rightarrow 0$.
\end{lemma}
\begin{proof}
The proof follows from simple applications of L'Hospital's rule.
\end{proof}

\begin{lemma}
\label{lemma:compare2}
Let $M_n=n^b$ and $\alpha_n=n^{\omega}$, where $b>\omega$. Then $\frac{M_nB_{M_n}}{B_n}\rightarrow 0$ 
if $M_n\prec\sqrt{n}$. 
\end{lemma}
\begin{proof}
The proof follows from simple applications of L'Hospital's rule.
\end{proof}

\begin{lemma}
\label{lemma:compare3}
$r_1(n)=\dfrac{\left(1-\frac{1}{M_n}\right)^n 
\left(\frac{\alpha_n+M_n}{\alpha_n}\right)^{M_n}}{\left(\frac{\alpha_n}{\alpha_n+n}\right)^2}\rightarrow 0$,
if $\frac{1}{2}>b>\omega$.
\end{lemma}
\begin{proof}
The proof follows from simple applications of L'Hospital's rule.
\end{proof}

Hence, combining the results of Lemma \ref{lemma:compare1} to Lemma \ref{lemma:compare3} we conclude that 
$MISE(SB)$ converges to $0$ at a faster rate than $MISE(EW)$, provided that we choose $M_n$ and $\alpha_n$
as required by Lemmas \ref{lemma:compare1}--\ref{lemma:compare3}.


\subsection{{\bf Asymptotic choices of the prior parameters}}
\label{subsec:asymptotic_choice}

Asymptotic choices of $\alpha_n$ and $M_n$ are provided by Lemmas \ref{lemma:compare1}--\ref{lemma:compare3}. 
We also need to choose $\sigma_n$ and $\epsilon_n$ appropriately
for complete prior specifications of the Bayesian frameworks of EW and SB. 
Below we provide choices of these parameters and compare the $MISE$ rates of EW and SB for these choices.

Let $\alpha_n = n^{\omega}$, $0< \omega< 1$, $\sigma_n^2=\frac{1}{n^t}$, $t>0$ and let $\epsilon_n$ 
be chosen so that $\epsilon_n^*=\frac{1}{n^r}$, 
$r>0$. 
Then the order of $MISE(EW)$ becomes
\begin{eqnarray}
\frac{1}{\left(1+n^{1-\omega}\right)^2}+\left((1+n^{1-\omega})e^{-\frac{c^2n^t}{6}}\right)+\frac{1}{n^r}+\frac{1}{n^t}. 
\label{eq:mise_ew_opt1}
\end{eqnarray}
By setting $r$, $t$ and $\omega$ appropriately, we can make the convergence rate of $MISE(EW)$
of the order $n^{\omega-1}$, where $0<\omega<1$. Also we can choose $r$, $t$ and $\omega$ such that the
additional conditions in Lemmas \ref{lemma:compare1}---\ref{lemma:compare3}
are satisfied, so that $MISE(SB)$ will be smaller than $MISE(EW)$. 

With the above choices of $\sigma_n$ and $\epsilon_n$, choice of $\alpha_n$ can also be made by minimizing
the order of $MISE(EW)$ with respect to $\alpha_n$ over $0\leq \alpha_n\leq \infty$, yielding
$\alpha_n=n\left(\dfrac{1}{1-\frac{e^{-\frac{c^2n^t}{12}}}{2^{1/3}}}-1\right)$. 
The order of $MISE(EW)$ in that case becomes 
\begin{eqnarray}
\left(\frac{1}{2^{1/3}}e^{-\frac{c^2n^t}{12}}\right)^2+2^{1/3}e^{-\frac{c^2n^t}{6}}+\frac{1}{n^r}+\frac{1}{n^t}. 
\label{eq:mise_ew_opt2}
\end{eqnarray}
However, note that the optimum order of $MISE(EW)$ given by (\ref{eq:mise_ew_opt2}) is attained 
when $\alpha_n$ is a decreasing function of $n$. This is not the same condition under which we have 
proved better performance of the SB model over
the EW model. So it is of interest to study whether under this new condition also 
the SB model outperforms the EW model. 
Using L'Hospital's rule it can be shown, plugging in 
$\alpha_n=n\left(\dfrac{1}{1-\frac{e^{-\frac{c^2n^t}{12}}}{2^{1/3}}}-1\right)$ 
in Lemmas \ref{lemma:compare1}--\ref{lemma:compare3},
that the corresponding ratios still converge to 0 as $n\rightarrow \infty$.
Hence, the SB model again outperforms EW. 

It is also possible, in principle, to obtain $\alpha_n$ by minimizing the order of $MISE(SB)$. However,
in this case closed form solution does not seem to be available, and numerical methods may be necessary.
In any case, it is clear that SB will outperform EW in terms of $MISE$ even for this choice.




\section{{\bf Bayesian $MISE$ based comparison between density estimators of SB and EW using simulation study}}
\label{sec:simulation_study}

It is demonstrated in \ctn{Bhattacharya08} with simulation experiments and with three real data sets
that the SB-based default posterior predictive density provides better fit to the observed data than the EW-based
default posterior predictive density, and moreover, the SB model emphatically outperforms EW
in terms of pseudo-Bayes factor.

We now compare the performances of the density estimators of the SB and the EW model with respect to Bayesian $MISE$
under simulation experiments.
We also consider the corresponding density estimators when the scales are different and are also part of the Dirichlet
process prior as in the original papers of SB and EW. 

It is to be noted that since the classical kernel density estimators ignore uncertainty about the parameters, 
they are not comparable with the Bayesian density estimators. For the same reason the Bayesian 
$MISE$ is an inappropriate measure for such estimators -- ignoring parameter uncertainty would make 
Bayesian $MISE$ misleadingly small. Hence, we do not consider the kernel density estimators for
formal comparison with the Bayesian density estimators using Bayesian $MISE$. 

Specifically, we compare the following density estimators:
\begin{itemize}
\item[(EW-1):] The Bayesian density estimator
$$\hat f_{EW}(y\mid \Theta_n,\sigma) = \frac{\alpha_n}{\alpha_n+n}A_n(y)
+\frac{1}{\alpha_n+n}\sum_{i=1}^{n}\frac{\varphi(\theta_i,\sigma+\hat k_n)}{(\sigma+\hat k_n)}\phi\left(
\frac{y-\theta_i}{\sigma+\hat k_n}\right)\mathbb I_{\{|y|\leq a\}},$$ provided by (\ref{eq:ew_prior_pred}).
We assume that $\tau=\frac{1}{\sigma^2}$ and since $\sigma$ gives almost point mass to zero for large $n$,
we set $\sigma=0.001$ in our examples. We assume that
$\theta_i\stackrel{iid}{\sim}G$; $i=1,\ldots,n$; $G\sim DP(\alpha_nG_0)$, and under $G_0$,
$\theta_i\sim N\left(\theta_0,\frac{\psi}{\tau}\right)\mathbb I_{\{|\theta_i|\leq a+c\}}$. 
Following \ctn{Bhattacharya08} and \ctn{Escobar95}
se set $\theta_0=5.02$ and $\psi=33.3$. We set $a=1000$ and $c=1000$.
The choice of $\hat k_n$ is elucidated in Section \ref{subsubsec:hat_k_n}.
\item[(SB-1):] The Bayesian density estimator
$$\hat f_{SB}(y\mid \Theta_{M_{n}},\sigma) = \frac{1}{M_{n}}\sum_{i=1}^{M_{n}}
\frac{\varphi(\theta_i,\sigma+\hat k_n)}{(\sigma+\hat k_n)}\phi\left(\frac{y-\theta_i}{\sigma+\hat k_n}\right)
\mathbb I_{\{|y|\leq a\}},$$ provided by (\ref{eq:sb_model}). We assume the same choices as in EW-1, with
$n$ replaced with $M_n$.
\item[(EW-2):] The Bayesian density estimator
$$\tilde f_{EW}(y\mid \Theta_n) = \frac{\alpha_n}{\alpha_n+n}
\int\frac{\varphi(\theta,\sigma)}{\sigma}\phi\left(\frac{y-\theta}{\sigma}\right)dG_0(\theta,\sigma)
+\frac{1}{\alpha_n+n}\sum_{i=1}^{n}\frac{\varphi(\theta_i,\sigma_i)}{\sigma_i}
\phi\left( \frac{y-\theta_i}{\sigma_i}\right)\mathbb I_{\{|y|\leq a\}},$$ 
where we now assume that $(\theta_i,\sigma_i)\stackrel{iid}{\sim}G$; $i=1,\ldots,n$; $G\sim DP(\alpha_nG_0)$.
In the above, $\Theta_n=\left\{(\theta_i,\sigma_i):i=1,\ldots,n\right\}$.
We set $\tau_i=\frac{1}{\sigma^2_i}$ and assume that under $G_0$, 
$\tau_i\sim Gamma(\frac{s}{2},\frac{S}{2})$ and $[\theta_i|\tau_i]\sim 
N\left(\theta_0,\frac{\psi}{\tau_i}\right)\mathbb I_{\{|\theta_i|\leq a+c\}}$.
We assume the same choices of the parameters common with EW-1, and for $s$ and $S$, we set $s=4$ and 
$S=2\times\left(\frac{0.2}{0.573}\right)$ following \ctn{Bhattacharya08}.
\item[(SB-2)] The Bayesian density estimator
$$\tilde f_{SB}(y\mid \Theta_{M_{n}}) = \frac{1}{M_{n}}\sum_{i=1}^{M_{n}}
\frac{\varphi(\theta_i,\sigma_i)}{\sigma_i}\phi\left(\frac{y-\theta_i}{\sigma_i}\right)
\mathbb I_{\{|y|\leq a\}},$$ where $(\theta_i,\sigma_i)\stackrel{iid}{\sim}G$; $i=1,\ldots,M_n$; $G\sim DP(\alpha_nG_0)$,
and $\Theta_{M_n}=\left\{(\theta_i,\sigma_i):i=1,\ldots,M_n\right\}$.
As in the case of EW-2, we set $\tau_i=\frac{1}{\sigma^2_i}$ and assume that under $G_0$,
$\tau_i\sim Gamma(\frac{s}{2},\frac{S}{2})$ and 
$[\theta_i|\tau_i]\sim N\left(\theta_0,\frac{\psi}{\tau_i}\right)\mathbb I_{\{|\theta_i|\leq a+c\}}$.
We assume the same choices as in EW-2, with $n$ replaced with $M_n$.
\end{itemize}

\subsection{{\bf Simulation design and methods}}
\label{subsec:simulation_design_methods}

For our simulation experiments, we draw $100$ datasets each consisting of $n=100$ data points from 
several different distributions and compare the Bayesian $MISE$ of the competing density estimators in each case,
for each of the $100$ datasets. We also compare the expected Bayesian $MISE$ of the density estimators 
obtained by averaging the Bayesian $MISE$ values over the $100$ datasets. 
For each of the $100$ datasets we approximate the Bayesian $MISE$ of the Bayesian density estimators 
by averaging over the output of standard Gibbs sampling from the posterior distributions and direct Monte Carlo 
draws from the true distribution. For the Gibbs sampling we discard the first $3\times 10^5$ iterations as burn-in
and store one sample in $150$ iterations in the next $15\times 10^5$ iterations to store $10000$ Gibbs samples
for inferential purpose. In a VMWare machine with 2.8 CPU GHz and 200 TB memory, 
SB-1, SB-2, EW-1 and EW-2 take, respectively, about 1 minute, 8 minutes,
23 minutes and 38 minutes on the average, for a single run of the aforementioned Gibbs sampling.

\subsubsection{{\bf Choices of true, data-generating densities}}
\label{subsubsec:true_densities}

We consider as true density $f_0$ the following distributions: $N(\mu_0,\sigma^2_0)$ with $(\mu_0=0,\sigma^2_0=200)$, 
$Cauchy(\mu_0,\sigma^2_0)$ with $(\mu_0=0,\sigma^2=5)$, Student's $t_{df;\mu_0,\sigma^2_0}$, with $df=5$ degrees of freedom, 
$\mu_0=0$ and $\sigma^2_0=5$, $Gamma(\alpha_0,\beta_0)$ with $\alpha_0=2$, $\beta_0=0.5$ (mean $\alpha_0/\beta_0$ and variance
$\alpha_0/\beta^2_0$) and $Beta(\alpha_0,\beta_0)$ with $\alpha_0=4$ and $\beta_0=2$. 
Note that the $Gamma$ and the $Beta$ densities are right skewed and left skewed, respectively, while the others are symmetric 
around zero with different scales. The $5$ degrees of freedom of the Student's $t$ distribution prevents the
underlying $t$ distribution from being close to either normal or Cauchy. In other words, we evaluate
performances of the competing density estimators for a reasonably wide range of true distributions, including the Cauchy
distribution whose moments do not exist. 

It is important to observe that among the above choices of the true densities, only 
the $N(0,200)$ density is approximately of the form (\ref{eq:true_density_1}) (this holds approximately since
our choice $a=1000$ is sufficiently large). Thus, as per our theoretical results, 
although in this case the Bayesian density estimators SB-1 and EW-1 are expected to perform reasonably well, and in particular,
SB-1 is expected to outperform EW-1, there is no theoretical basis to expect good performance of SB-1 and EW-1
for the other choices of the non-normal true densities, given that only a single bandwidth is considered
by SB-1 and EW-1. On the other hand, since SB-2 and EW-2 make use of multiple bandwidths, better performances
may be expected of these density estimators in practice. 
However, our aim is to investigate, using simulation studies, if SB-1 and EW-1, 
in particular SB-1, can yield reasonable performance for wider range of densities in practice 
than that considered for our asymptotic theory. 
For all the density estimators and for the true normal and Cauchy densities we choose $a=1000$. 
For the other true densities we do not enforce any
truncation constraint.

\subsubsection{{\bf Choices of $\alpha_n$ and $M_n$}}
\label{subsubsec:alpha_M}

Following Lemma \ref{lemma:compare3} in Section \ref{sec:comparison_sb_ew}, we set $\alpha_n=n^{\omega}$,
$M_n=n^b$, with $\omega<b<1/2$. In particular, we set $\omega=0.3$ and $b=0.4$, so that $n^{\omega}\approx 4$
and $n^b\approx 7$. Hence, we set $\alpha_n=4$ and $M_n=7$.

\subsubsection{{\bf Consistent estimation of $k$}}
\label{subsubsec:hat_k_n}
To estimate $k$ consistently we first observe that in (\ref{eq:true_density_1}), for $a=\infty$, 
$k$ is nothing but $E\left[Var(Y|\theta)\right]$, 
where both the expectation and the variance parts are with respect to the true model $f_0$. Now observe that
$E\left[Var(Y|\theta)\right]$ can be viewed as the average of the cluster-wise variances of $Y$,
$\theta$ being the cluster centers, since under $f_0$, $[Y|\theta]$ is approximately
normal with mean $\theta$ and variance $k^2$. In other words, assuming that $\theta_j\stackrel{iid}{\sim}F_0$;
$j=1,\ldots,m$ and $[Y_{ij}|\theta_j]\stackrel{iid}{\sim}N\left(\theta_j,k^2\right)$; $i=1,\ldots,n$; $j=1,\ldots,m$, 
a clustering of $\{Y_{ij};i=1,\ldots,n;j=1,\ldots,m\}$ would reveal $m$ clusters, with each empirically estimated
within-cluster variance being a consistent estimator of $k^2$, as $n\rightarrow\infty$. The average of the 
within-cluster variances is thus a consistent estimator of $E\left[Var(Y|\theta)\right]$, as $n\rightarrow\infty$, for $m\geq 1$.
For finite $a>0$, $E\left[Var(Y|\theta_j)\right]$ is given by 
$k^2\left[1-\frac{\beta_j\phi(\beta_j)-\alpha_j\phi(\alpha_j)}{\Phi(\beta_j)-\Phi(\alpha_j)}-
\frac{\phi(\beta_j)-\phi(\alpha_j)}{\Phi(\beta_j)-\Phi(\alpha_j)}^2\right]$, where $\alpha_j=\frac{-a-\theta_j}{k}$
and $\beta_j=\frac{a-\theta_j}{k}$. Also, $E(Y|\theta_j)=\theta_j
-k\left[\frac{\phi(\beta_j)-\phi(\alpha_j)}{\Phi(\beta_j)-\Phi(\alpha_j)}\right]$, when $a$ is finite. 
From the expressions of $E(Y|\theta_j)$ and $Var(Y|\theta_j)$, $\hat\theta_j$ and $k^2$ can be estimated
consistently by substituting the cluster-wise mean and variance in the place of $E(Y|\theta_j)$ and $Var(Y|\theta_j)$
respectively, and solving for $\theta_j$ and $k^2$. 
Since $|Y_{ij}|<a$ for $i=1,\ldots,n$; $j=1,\ldots,m$, for all $n$ and $m$, it follows that $\hat k_n$ is uniformly bounded,
so that $(\hat k_n-k)^2$ is uniformly integrable, as required for the development of our asymptotic theory.

To obtain $\hat k_n$ in our examples, since $a=1000$ is adequately large, we first ignore the truncation 
constraint for simplicity. Then we cluster the observed data of size $n=100$ into $K=2,3,4,5$ 
clusters using the K-means algorithm and for each value of $K$ we compute the variance of the within-cluster variances. 
We then select that $K=\hat K$ for which the variance of the within cluster variances is minimized, which ensures that 
the cluster variances are approximately the same. We then set $\hat k^2_n$ to be the average
of the within cluster variances associated with $K=\hat K$. 
Note that we restrict the number of clusters to at most $5$ so that each cluster may have a reasonable number
of observations to reliably compute the cluster-wise variances.

\subsection{{\bf Results of our simulation experiments}}
\label{subsec:simulation_results}

Table \ref{table:MISE} shows the expected Bayesian $MISE$ values for the competing Bayesian density 
estimators for five different choices of the true distribution, with various degrees of skewness and different 
scales and variabilities. Observe that both SB-1 and EW-1 perform much better than either of SB-2 and EW-2
when the true density is normal, and also expectedly yield the best performances among all the choices of the true densities. 
Observe that only except in the case of the $Gamma$ density, SB-1 outperforms EW-1 for all the other choices of the true density.
In fact, SB-1 beats both EW-2 and SB-2 not only when the true density is normal, 
but also when the true density is Cauchy.
On the other hand, SB-2 outperforms EW-2 when the true densities are normal and $Beta$, and in the rest of the
cases, EW-2 performs somewhat better.

Figure \ref{fig:mise_comparison} depicts the comparisons between SB-1, EW-1, SB-2 and EW-2 for the five true densities
in terms of the data-wise Bayesian $MISE$'s. Panel (a) of the figure shows that when $f_0$ is the normal density,
SB-1 is almost uniformly better than the rest in terms of Bayesian $MISE$. When the true density is Cauchy, panel (b) 
shows that SB-1 and EW-1 perform almost equivalently, and have smaller variance with respect to the data compared to the rest. 
Indeed, for most of the data sets, SB-1 performs the best, resulting in the overall best performance among all the
density estimators. Panel (c) shows that when the true density if $t$, both SB-2 and EW-2 are close to each other
in terms of Bayesian $MISE$ for most of the data sets and perform better than
SB-1 and EW-1; SB-1 performs almost uniformly better than EW-1. For the Gamma distribution, panel (d) shows that 
EW-2 performs better than SB-2, which uniformly dominates both EW-1 and SB-1 in terms of Bayesian $MISE$. Also, the variability
of Bayesian $MISE$ for EW-1 is significantly higher than those of the remaining density estimators. Panel (e) shows
that both SB-1 and SB-2 significantly outperform both EW-1 and EW-2, and that SB-2 performs the best for most of the
data sets. 

Figure \ref{fig:density_comparison} shows the posterior predictive density estimates for sample data sets from the
five true densities. It is generally observed in panels (a) -- (e) that EW-2 and SB-2 are more inclined towards
capturing the observed histogram rather than the true density. This makes these density estimators less smooth than desired.
In contrast, SB-1 and EW-1 are smoother, and seem to be better suited for density estimation whenever
the true density is of the form (\ref{eq:true_density_1}). For the details, note that when the normal density is true,
SB-1 and EW-1 visually outperform SB-2 and EW-2, which is consistent with the Bayesian $MISE$ results; moreover,
SB-2 clearly beats EW-2. For the Cauchy density, panel (b) shows that SB-1 performs the best, whereas EW-1
seem to deviate the most from the true density, which is again consistent with our Bayesian $MISE$ results.
When the true density is $t$ or $Gamma$, as depicted in panels (c) and (d), both SB-1 and EW-1 seem to be 
inadequate for estimating
the true density or the observed histogram. In comparison, SB-2 and EW-2, although not sufficiently smooth
for estimating the true density adequately, perform better that SB-1 and EW-1. As before, these are reflected
in our numerical results on Bayesian $MISE$. Panel (e) shows that in the case of the $Beta$ density, EW-2 performs
the worst, while not much difference is visually revealed among the other density estimators. However, our 
Bayesian $MISE$ computation shows that SB-2 performs the best in this case, closely followed by SB-1, while
EW-1 and EW-2 perform significantly poorly in comparison. 

Overall it may be argued that the methods based on SB emphatically outperform those based on EW
particularly when the implementation time is also taken in consideration. Indeed, as reported
in Section \ref{subsec:simulation_design_methods}, SB-1 and SB-2 take significantly less computational time
compared to EW-1 and EW-2.

\begin{table}
\caption{Expected Bayesian $MISE$ comparison between density estimators}
\label{table:MISE}
\begin{center}
\begin{tabular}{|c||cc||cc|}\hline
True distribution & EW-1 & SB-1 & EW-2 & SB-2\\
\hline
$N(0,200)\mathbb I_{\left\{|y|\leq 1000\right\}}$ & 0.000092 & 0.000010 & 0.000082 & 0.000061\\
$Cauchy(0,5)\mathbb I_{\left\{|y|\leq 1000\right\}}$ & 0.000493 & 0.000211 & 0.000278 & 0.000245\\
$Student's~ t_{5;0,5}$ & 0.001402 & 0.000859 & 0.000338 & 0.000405\\
$Gamma(2,0.5)$ & 0.008891 & 0.011571 & 0.001469 & 0.002146\\
$Beta(4,2)$ & 0.659581 & 0.157082 & 0.216738 & 0.110191\\
\hline
\end{tabular}
\end{center}
\end{table}

\newpage

\begin{figure}
\centering
\subfigure[$f_0(x)\equiv N(0,200)\mathbb I_{\left\{|x|\leq 1000\right\}}$.]{ \label{fig:normal}
\includegraphics[width=6.5cm,height=5.4cm]{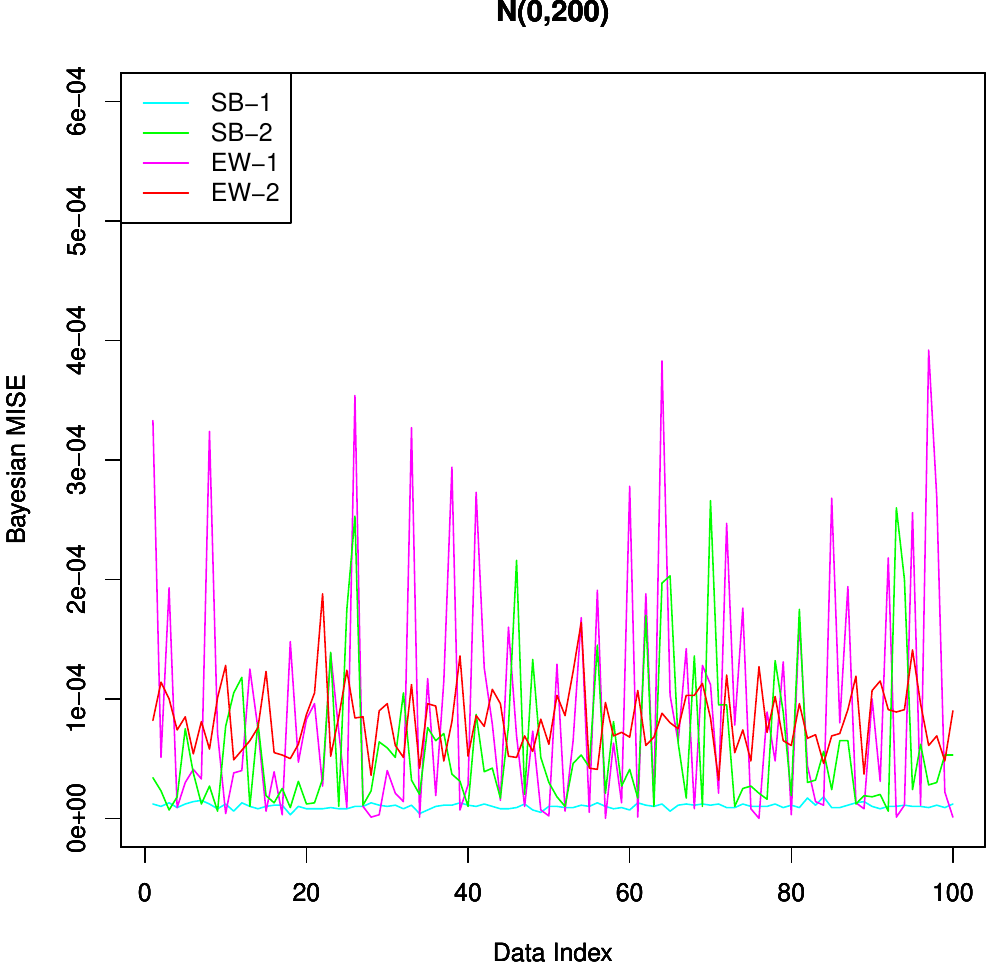}}
\hspace{2mm}
\subfigure[$f_0(x)\equiv Cauchy(0,5)\mathbb I_{\left\{|x|\leq 1000\right\}}$.]{ \label{fig:cauchy}
\includegraphics[width=6.5cm,height=5.3cm]{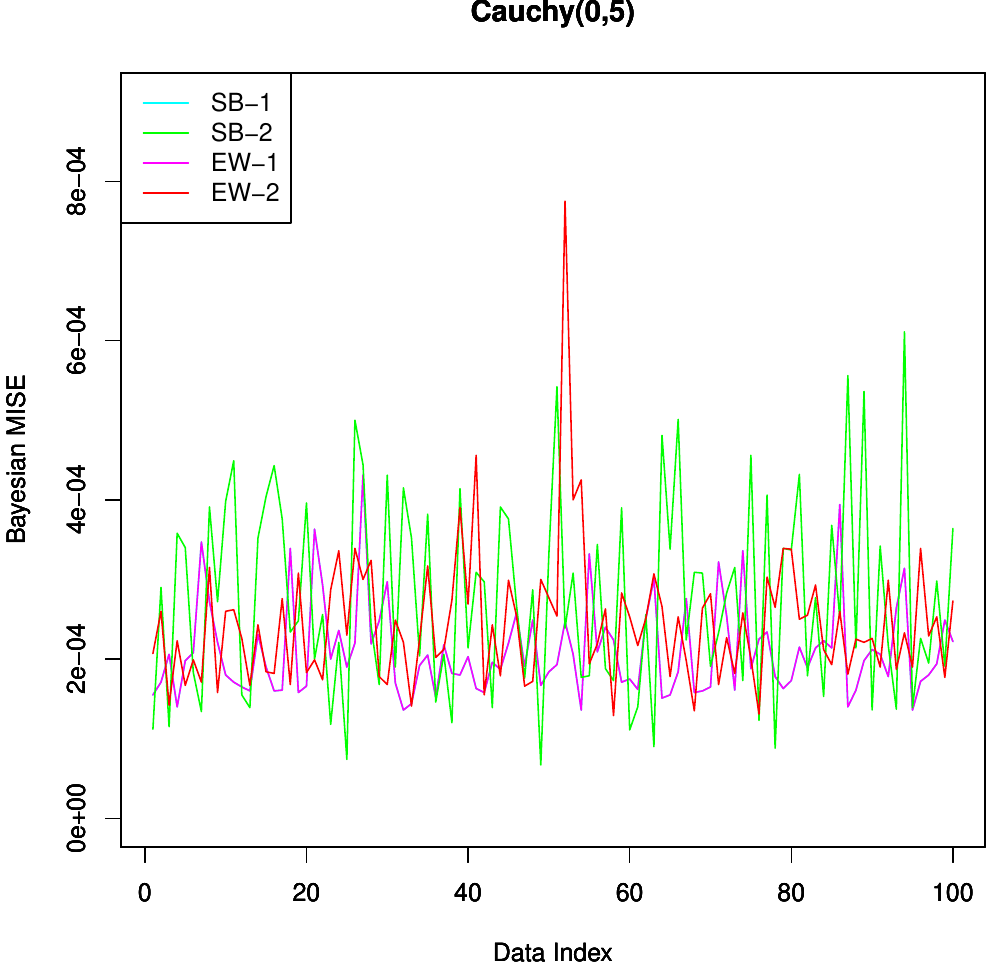}}
\vspace{2mm}\\
\subfigure[$f_0\equiv t(5;0,5)$.]{ \label{fig:t}
\includegraphics[width=6.5cm,height=5.3cm]{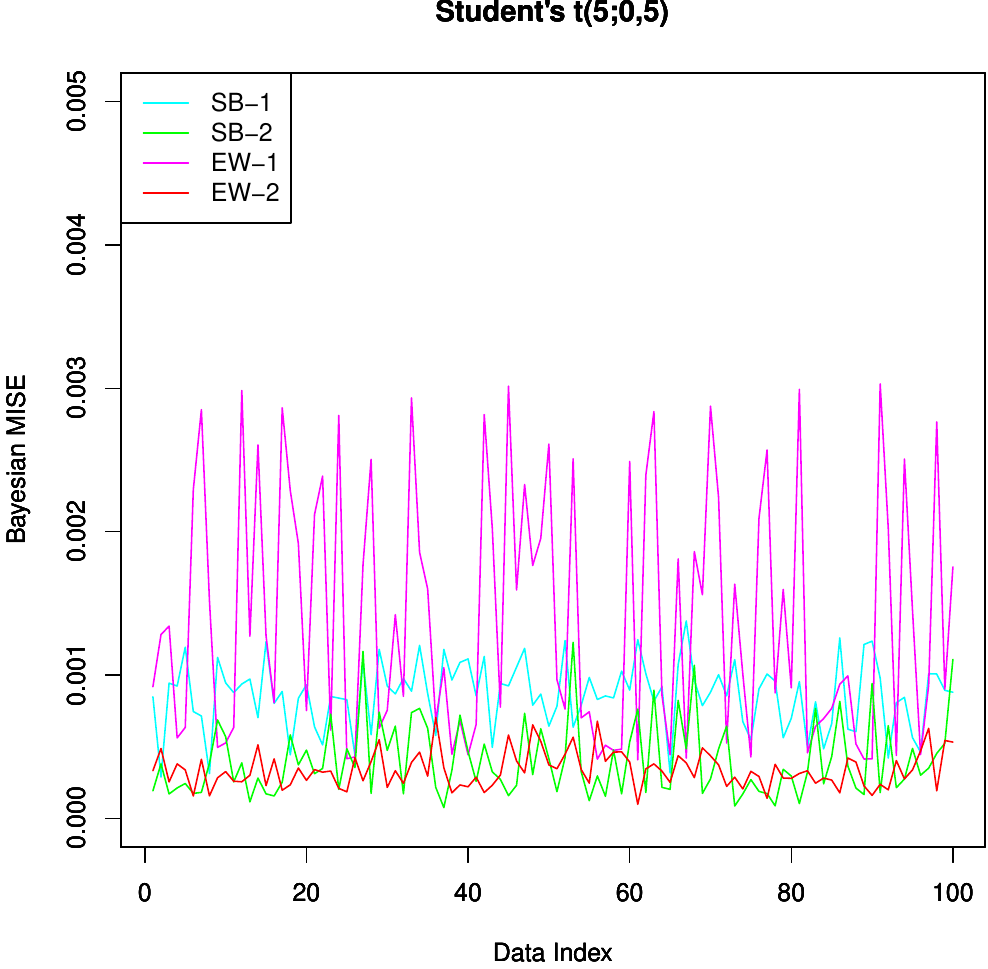}}
\hspace{2mm}
\subfigure[$f_0\equiv Gamma(2,0.5)$.]{ \label{fig:gamma}
\includegraphics[width=6.5cm,height=5.3cm]{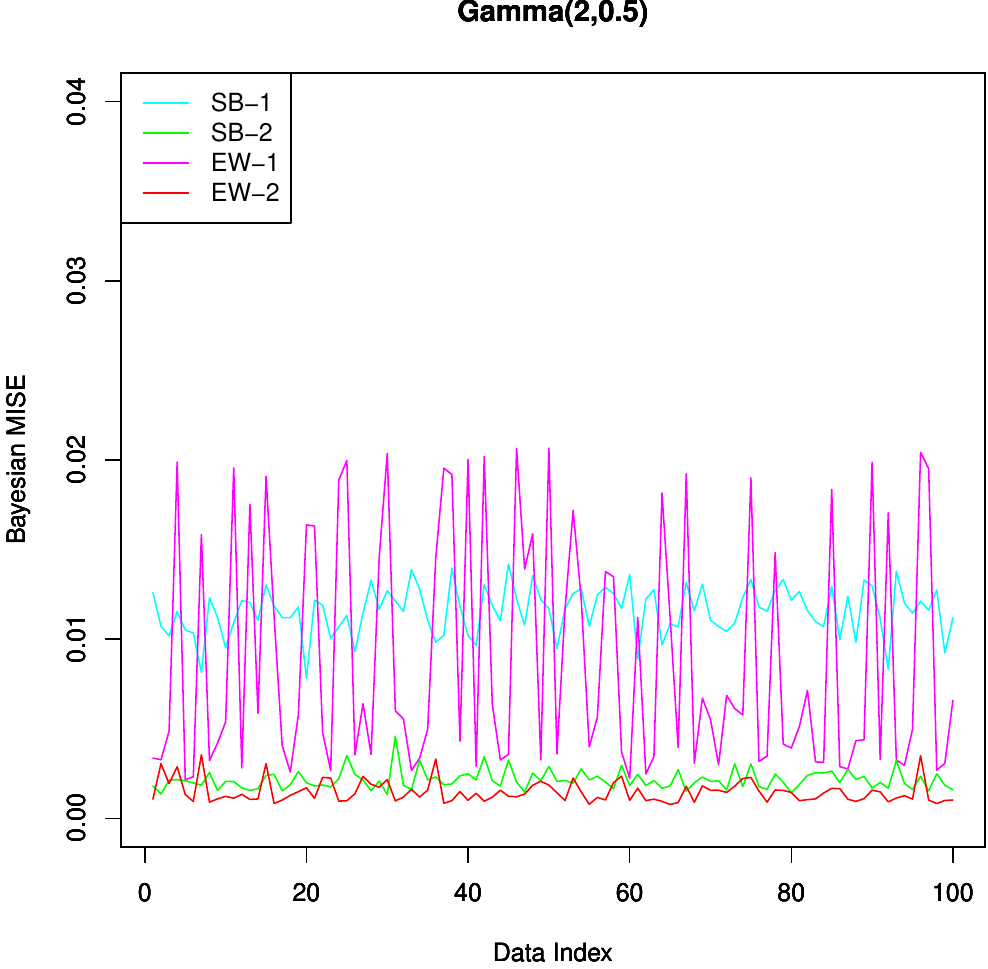}}
\vspace{2mm}\\
\subfigure[$f_0\equiv Beta(4,2)$.]{ \label{fig:beta}
\includegraphics[width=6.5cm,height=5.3cm]{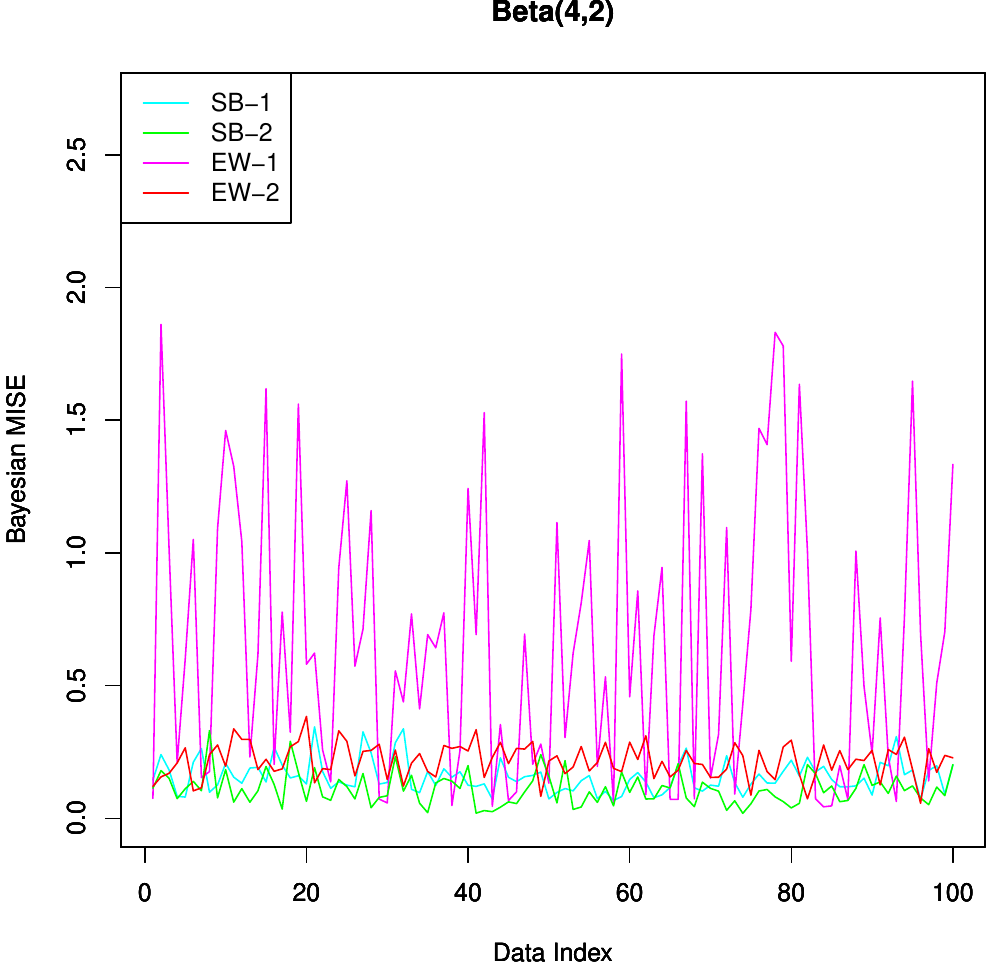}}
\caption{{\bf Data-wise Bayesian $MISE$'s for EW and SB based density estimators for various choices of the
true density $f_0$.}} 
\label{fig:mise_comparison}
\end{figure}

\begin{figure}
\centering
\subfigure[$f_0(x)\equiv N(0,200)\mathbb I_{\left\{|x|\leq 1000\right\}}$.]{ \label{fig:compare_normal}
\includegraphics[width=6.5cm,height=5.4cm]{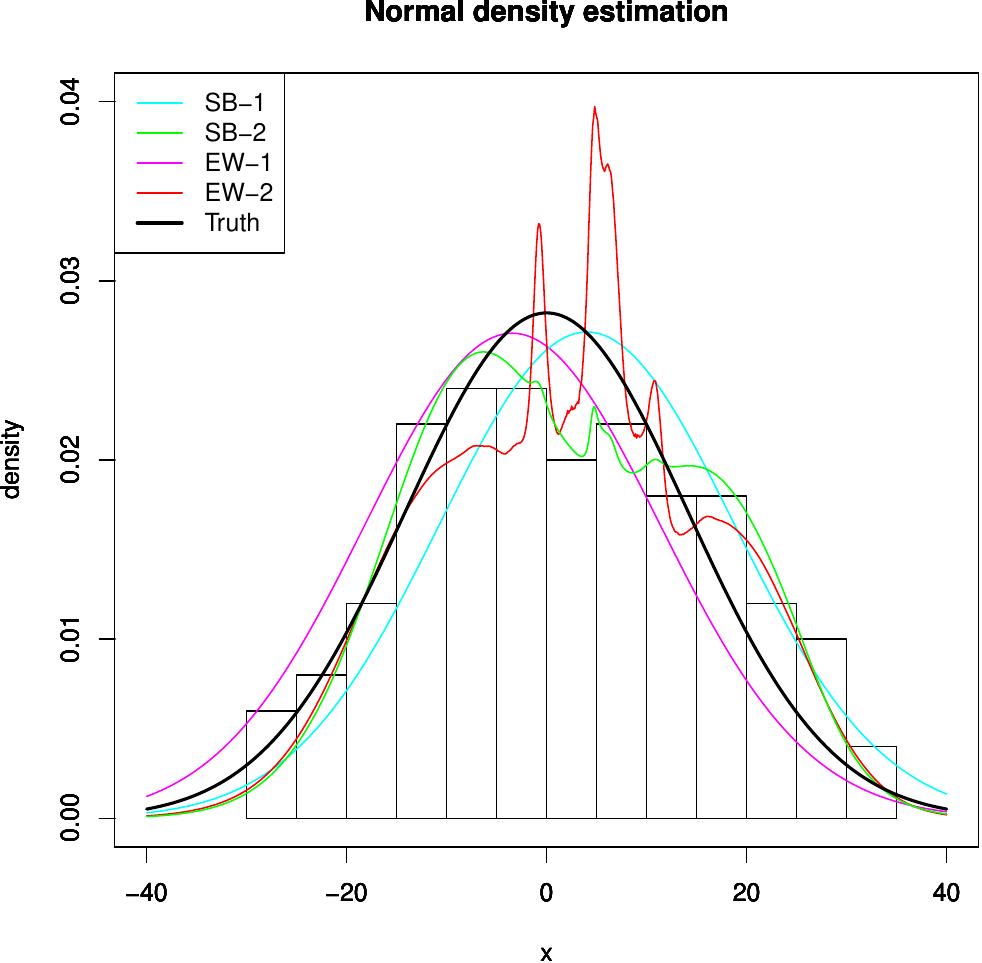}}
\hspace{2mm}
\subfigure[$f_0(x)\equiv Cauchy(0,5)\mathbb I_{\left\{|x|\leq 1000\right\}}$.]{ \label{fig:compare_cauchy}
\includegraphics[width=6.5cm,height=5.3cm]{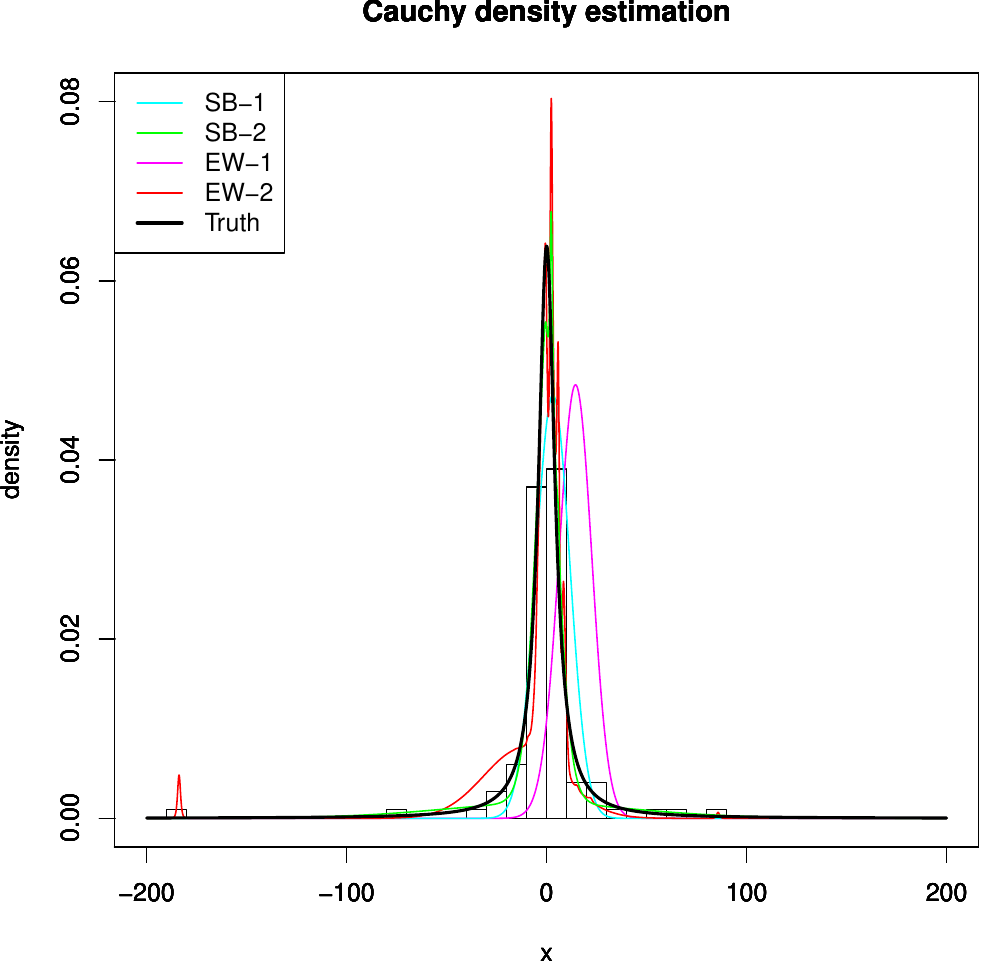}}
\vspace{2mm}\\
\subfigure[$f_0\equiv t(5;0,5)$.]{ \label{fig:compare_t}
\includegraphics[width=6.5cm,height=5.3cm]{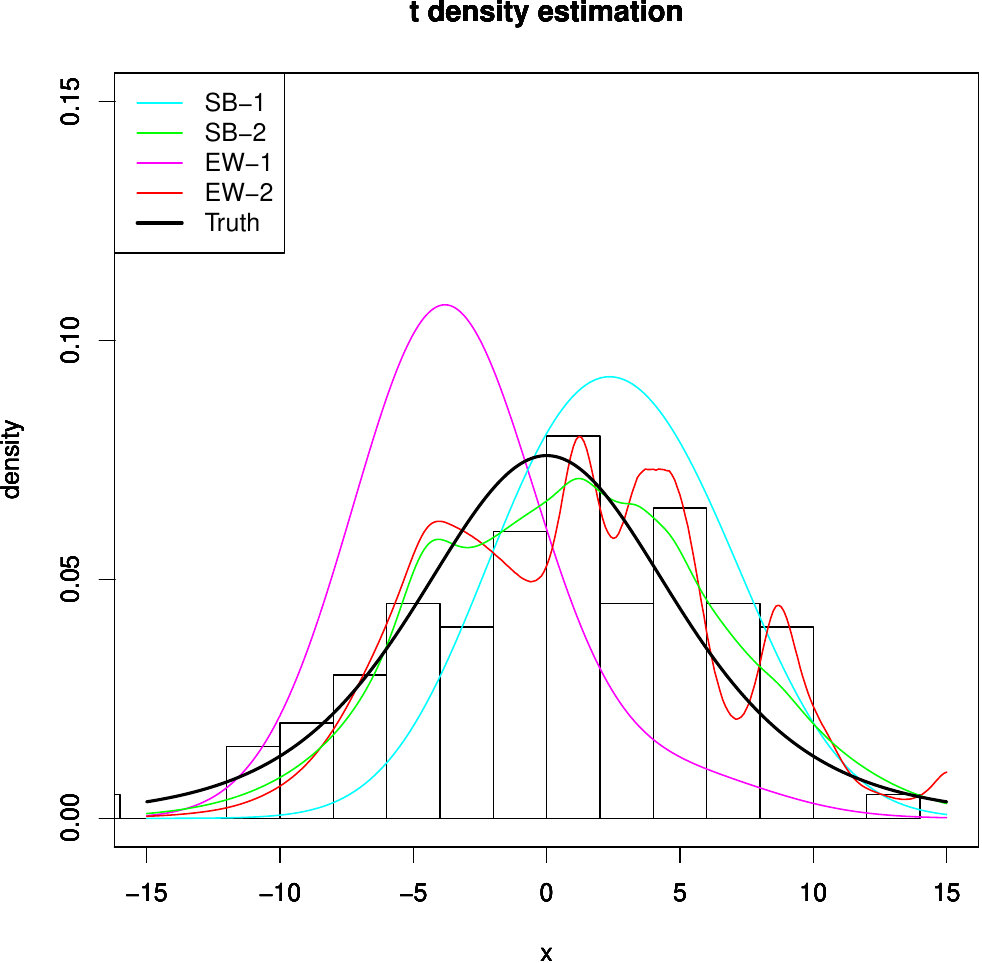}}
\hspace{2mm}
\subfigure[$f_0\equiv Gamma(2,0.5)$.]{ \label{fig:compare_gamma}
\includegraphics[width=6.5cm,height=5.3cm]{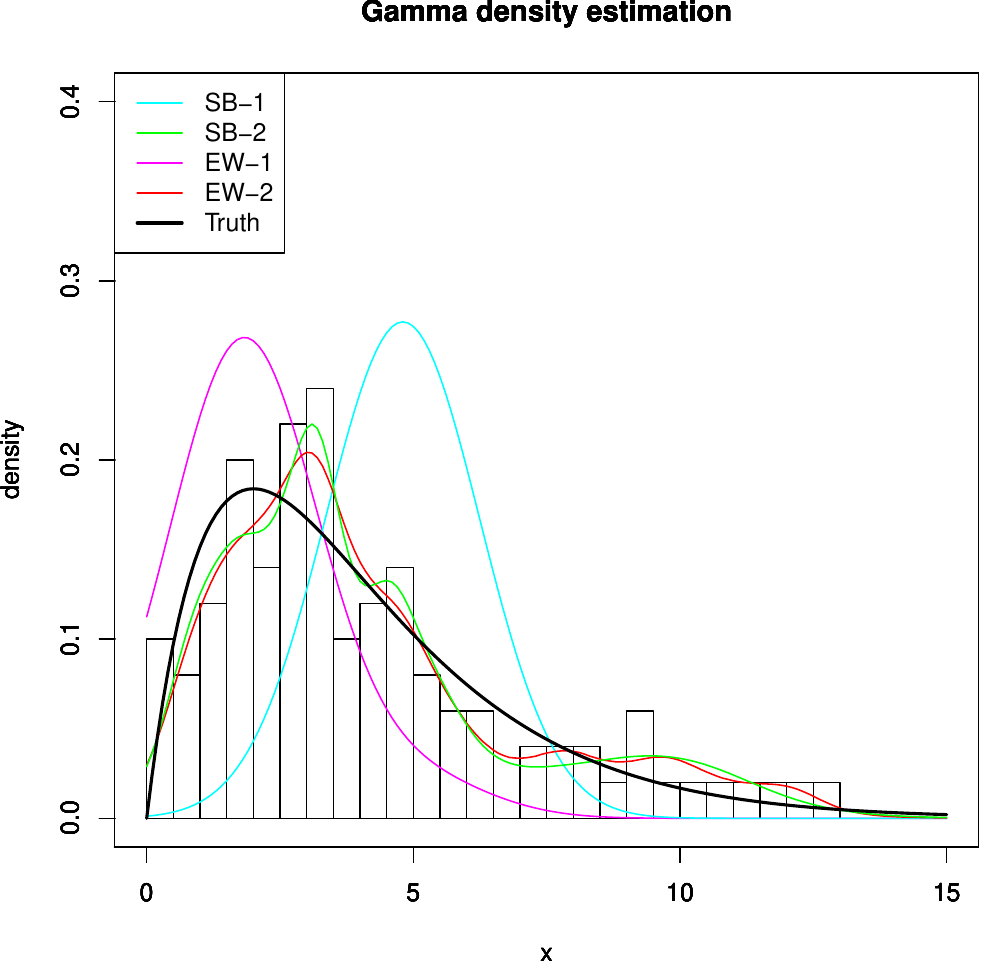}}
\vspace{2mm}\\
\subfigure[$f_0\equiv Beta(4,2)$.]{ \label{fig:compare_beta}
\includegraphics[width=6.5cm,height=5.3cm]{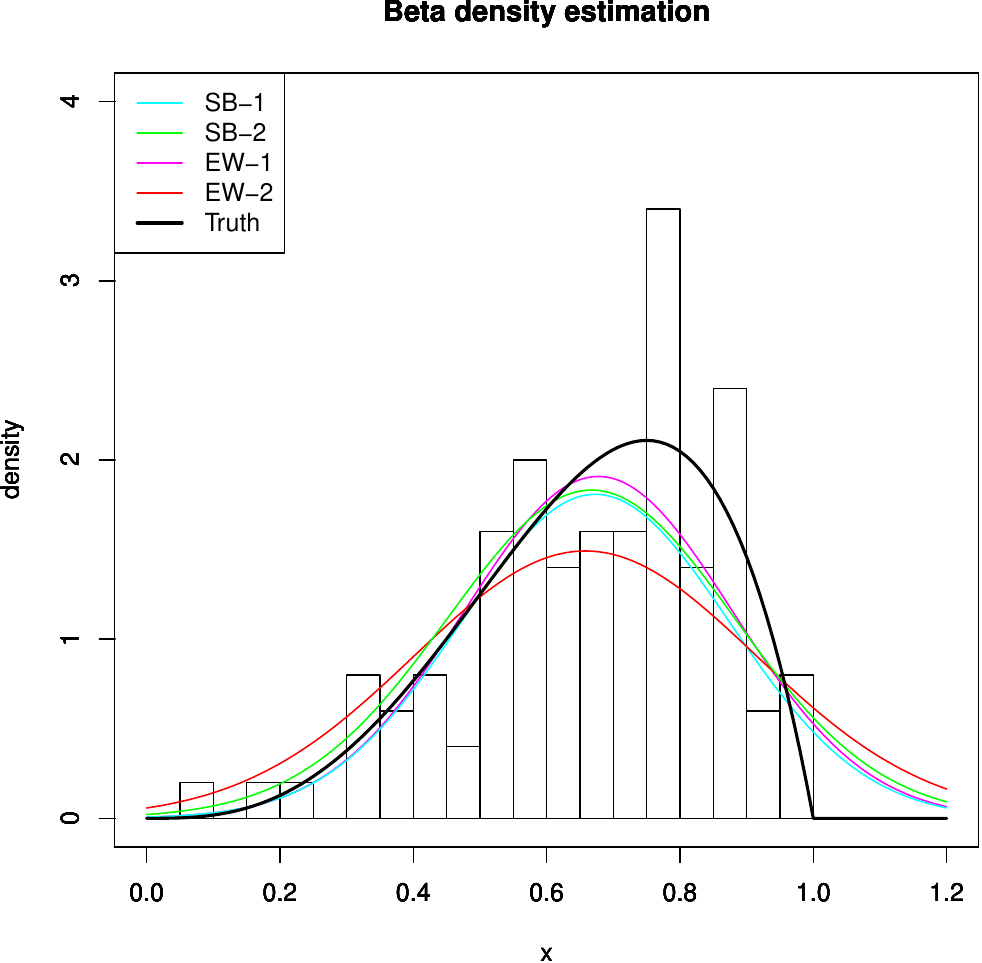}}
\caption{{\bf Visual comparison between the posterior predictive Bayesian density estimators for EW and SB for 
various choices of the true density $f_0$.}} 
\label{fig:density_comparison}
\end{figure}

\section{{\bf Convergence to wrong model}}
\label{sec:param_model}

We now investigate conditions under which the models of EW and SB converge to wrong models, that is, to models which
did not generate the data. It is perhaps easy to anticipate that if $\alpha_n$ is made to grow at a rate faster than $n$, then
the density estimator of EW would converge to the convolution of the kernel and $G_0$, irrespective of the true, data-generating
model. 
We show in this section that indeed the simple condition of letting $\alpha_n$ grow faster than $n$ is enough to 
derail the EW model. On the other hand, much stronger conditions are necessary to get the SB model
to converge to the wrong model.

\subsection{{\bf EW model}}
\label{subsec:param_ew}

\begin{theorem}
\label{theorem:param_ew}
Suppose that $\alpha_n\succ O(n)$. Then 
\begin{eqnarray}
E^n_0\left[E\left(\hat f_{EW}(y\mid \Theta_n,\sigma)\right)\right]\rightarrow 
\int_{\theta_{n+1}}\frac{\varphi(\theta_{n+1},k)}{k\sqrt{2\pi}}e^{-\frac{\left(y-\theta_{n+1}\right)^2}{2k^2}}
dG_0(\theta_{n+1}).
\label{eq:param_ew}
\end{eqnarray}
\end{theorem}

\begin{proof}
See Section S-4.1 of the supplement.
\end{proof}

Thus, the condition $\alpha_n\succ O(n)$ is enough to mislead the EW model, taking it to a 
wrong model. An intuitive reason for this negative result is that as $\alpha_n$ increases at a rate
faster than $n$, then in (\ref{eq:ew_prior_pred}), the weight $\frac{\alpha_n}{\alpha_n+n}$ 
of the first term $A_n$ tends to 1 as $n\rightarrow\infty$. Since $A_n$ is the term associated
with $G_0$, and since this term gets full weight as $n\rightarrow\infty$, it is not unexpected
that convergence to the true model can not be achieved in this case.

It may be interesting to compare the issue involved with this inconsistency problem with the famous example 
of \ctn{Diaconis86a}, \ctn{Diaconis86b}. 
The latter's example concern estimation of an unknown 
location parameter $\theta$, 
with the following set-up: for $i=1,\ldots,n$, $x_i=\theta+\epsilon_i$; $\epsilon_i\stackrel{iid}{\sim}F$;
$F\sim DP(\alpha G_0)$. Assuming that $x_1,\ldots,x_n$ are all distinct, and that $G_0$ has density $g_0$ 
it follows (see \ctn{Korwar73}) that the likelihood of the location parameter is given, almost surely, by
$\prod_{i=1}^ng_0(x_i-\theta)$. Now, if the data-generating distribution is very different from $g_0$,
then it is not surprising that the posterior mean of $\theta$ may be inconsistent. Indeed, \ctn{Diaconis86a},
\ctn{Diaconis86b} chose the data-generating distribution to be particularly incompatible with the
base measure $G_0$.
In other words, the example of \ctn{Diaconis86a}, \ctn{Diaconis86b} relies upon the ``no-ties" assumption
within the observed data, so that the parametric likelihood, based on $g_0$, is obtained. In contrast,
we investigate the situation when the precision parameter $\alpha_n$ increases with the sample size $n$
at a fast rate, but without ruling out the possibility of ties within the data. 

In the next section we will observe that, unlike in EW, the condition $\alpha_n\succ O(n)$ does not alone 
guarantee inconsistency in the SB case.

\subsection{{\bf SB model}}
\label{subsec:param_sb}

If there are more ``distinct" mixture components in the SB model than
sample size $n$, then at least one component will be empty. If this phenomenon persists for large $n$
then the posterior probabilities of the empty components, being the same as their
prior probailities, can not tend to zero as $n\rightarrow\infty$. 
More distinct components than $n$ can be enforced by setting $M_n>n$ and letting 
$\alpha_n$ increase at a rate much faster than $M_n$.
Since marginally the empty components
arise from $G_0$, in this case the 
model will converge to $\int\frac{\varphi(\theta,k)}{k}\phi(\frac{y-\theta}{k})dG_0(\theta)$.    

The above issue should not be confused with the results of \ctn{Rousseau11}
who show that if the fitted mixture model contains more components than the true model, the latter 
also assumed to be a mixture
of the same form, then the extra components of the fitted model asymptotically die out under suitable
prior assumptions. Indeed, although \ctn{Rousseau11} assumed that the number of components in the fitted
mixture is more than that in the true mixture, unlike us they did not let the former grow with the sample
size $n$. Also, our assumptions regarding the true density is much more general than the finite mixture
assumption of \ctn{Rousseau11}.

Now we elaborate the issue of non-convergence of the SB model in more detail.
Recall that no additional condition on $\alpha_n$ is necessary to ensure asymptotic 
convergence of $MISE(SB)$ given by
(\ref{eq:mise_order_sb}); it is only necessary to ensure that $M_n\prec O(\sqrt{n})$ and that
$\sigma_n$ and $\epsilon_n$ are appropriately chosen. 
So, unlike in the case of EW, even if $\alpha_n\succ O(M_n)$ or $\alpha_n\succ O(n)$, 
the SB model can still converge to the 
true distribution by setting $M_n\prec O(\sqrt{n})$. 

However, for $M_n\succ O(n)$, the order
of the posterior probability
$P(Z\in R_1^*, \Theta_{M_n}\in E, \sigma\leq \sigma_n\vert \bY_n)$, given by
$\left(1-\frac{1}{M_n}\right)^n\left(\frac{\alpha_n+M_n}{\alpha_n}\right)^{M_n}$, does not converge to 0. 
Since the probability that a component will remain empty is $P(Z\in R_1^*\vert \bY_n)$, which
is bounded below by $P(Z\in R_1^*, \Theta_{M_n}\in E, \sigma\leq \sigma_n\vert \bY_n)$, with
probability tending to 1 as $n\rightarrow\infty$ a component will remain empty if the latter
probability tends to 1.
In order to investigate conditions which ensure this, we make use of Lemma S-4.1, formally stated and proved in
Section S-4.2 of the supplement. Assuming that $C_n=O\left(\frac{1}{r^s_n n^2}\right)$; $s>2$  
(the implication of which is elucidated in Remark 1 below), the lemma gives an asymptotic lower bound 
of the form $\left(1-\frac{1}{M_n}\right)^n\left(\frac{\alpha_n+M_n}{\alpha_n}\right)^{M_n}$
for the posterior probability
$P(Z\in R_1^*, \Theta_{M_n}\in E, \sigma\leq \sigma_n\vert \bY_n)$.

Using L' Hospital's rule it is easy to check that for $M_n=n^b$, $\alpha_n=n^{\omega}$, 
$\omega>1$, $b>1$, such that $\omega>2b$,
$\left(\frac{\alpha_n}{\alpha_n+M_n}\right)^{M_n} \left(1-\frac{1}{M_n}\right)^{n}\rightarrow 1$. 
Thus for $M_n> n$, $\alpha_n> n$, the posterior probability that a mixture
component remains empty converges to 1 as $n\rightarrow\infty$. 
Since $P(Z\in R_1^*, \Theta_{M_n}\in E, \sigma\leq \sigma_n\vert\bY_n)$ converges to 1
as $n\rightarrow\infty$, the factor associated with this posterior probability is the only contributing term  
in $E\left(\frac{\varphi(\theta_i,\sigma+\hat k_n)}{\left(\sigma+\hat k_n\right)}
\phi\left(\frac{y-\theta_i}{\sigma+\hat k_n}\right)\bigg\vert \bY_n\right)$, 
for large $n$. 
Formally, we have the following theorem.

\begin{theorem}
\label{theorem:converge_param}
Assume the conditions of Lemma S-4.1. 
Further assume that $M=n^b$, $\alpha_n=n^{\omega}$, 
$\omega>1$, $b>1$, such that $\omega>2b$.
Then, for the SB model it holds that
\begin{equation}
E^n_0\left[E\left(\hat f_{SB}(y\mid\Theta_{M_n},\sigma)\bigg\vert \bY_n\right)\right]\rightarrow 
\int_{\theta_{i}}\frac{\varphi(\theta_i,k)}{k\sqrt{2\pi}}e^{-\frac{\left(y-\theta_{i}\right)^2}{2k^2}}dG_0(\theta_{i}).
\label{eq:sb_wrong}
\end{equation}
\end{theorem}

\begin{proof}
See Section S-4.2.1 of the supplement.
\end{proof}

Thus, while the EW model can converge to the wrong model if only $\alpha_n\succ O(n)$ is assumed, much
stronger restrictions
are necessary to get the SB model to deviate from the true model.
%
%
The following remarks may be noted.
\\[2mm]
{\it Remark 1:} Note that Lemma S-4.1 requires $C_n=O\left(\frac{1}{r^s_n n^2}\right)$; $s>2$, 
such that $\frac{1}{r^{s-1}_nn^2}\rightarrow\infty$ and $\frac{1}{r^{s-2}_n}\rightarrow\infty$.
These conditions are satisfied, for example, for $r_n=\frac{1}{n^t\tilde a^{\frac{1}{s}}}$,
with $0<t<1$, $s=\frac{3}{t}$ and $\frac{a}{2}<\tilde a<4a$. These choices also ensure that $\frac{a}{2}<\frac{C_n}{n}<4a$.
\\[2mm]
{\it Remark 2:}  
However, all choices of $r_n$ such that $\frac{1}{r^{s-1}_nn^2}\rightarrow\infty$ and $\frac{1}{r^{s-2}_n}\rightarrow\infty$
need not ensure that $\frac{a}{2}<\frac{C_n}{n}<4a$. For instance, the same choices as above except that $s>\frac{3}{t}$
satisfies these the first two requirements but $\frac{C_n}{n}\rightarrow\infty$ as $n\rightarrow\infty$.
This also contradicts the compact support assumption of the true distribution.
\\[2mm]
{\it Remark 3:} Remark 2 shows that even if $\alpha_n\succ O(n)$ and $M_n\succ O(n)$, the SB model 
does not necesarily converge to the wrong model, whereas EW converges to the wrong model just if
$\alpha_n\succ O(n)$. In this regard as well, the SB model appears to be superior compared to the EW model.   
%
%
%

\section{{\bf Modified SB Model}}
\label{sec:modified_sb_model}

A slightly modified version of SB model is as follows:
\begin{equation}
\hat f^*_{SB}(y\mid \Theta_{M}, \Pi,\sigma) = \sum_{i=1}^{M}\pi_i\frac{\varphi(\theta_i,\sigma+k)}{(\sigma+k)}
\phi\left(\frac{y-\theta_i}{\sigma+k}\right),
\label{eq:sb_model_modified}
\end{equation} 
where $\sum_{i=1}^{M} \pi_i=1$. We assume that
$\Pi=(\pi_1, \ldots, \pi_M)\sim Dirichlet(\beta_1, \ldots, \beta_M)$, $\beta_i> 0$ and is independent of $\Theta_M$ and $\sigma$.
The assumptions of Dirichlet process prior on $\Theta_M$ and the prior structure of $\sigma$ remain same as before.
The previous form of the SB model (\ref{eq:sb_model}) is a special case (discrete version) of this model with $\pi_i=\frac{1}{M}$ for each $i$.

Due to discreteness of the Dirichlet process prior, the parameters $\theta_{i}$ are coincident with positive probability.
As a result, (\ref{eq:sb_model_modified}) reduces to the form
\begin{equation}
\hat f^*_{SB}(y\mid \Theta_{M},\Pi,\sigma) = \sum_{i=1}^{M^*}p_i\frac{\varphi(\theta^*_i,\sigma+k)}{(\sigma+k)}
\phi\left(\frac{y-\theta^*_i}{\sigma+k}\right),
\label{eq:mult_normixture2}
\end{equation}
where $\left\{\theta^*_1,\ldots,\theta^*_{M^*}\right\}$ are $M^*$ distinct components in $\bTheta_M$ with $\theta^*_i$ occuring $M_i$ times, 
and $p_i=\sum_{j=1}^{M_i}\pi_j$. 
In contrast to the previous form of the SB model (\ref{eq:sb_model}) where the mixing probabilities
are of the form $M_i/M$, here the mixing probabilities $p_i$ are continuous.

The asymptotic calculations associated with the modified SB model are almost the same as in the case of the 
SB model in Section \ref{subsec:sb_model}. 
Indeed, this modified version of SB's model converges to the same
distribution where the EW model and the previous version of the SB model also converge.
Moreover, the order of $MISE$ for this model remains exactly the same as that of the
previous version of the SB model. In Section S-5 of the supplement we 
provide a brief overview of the steps involved in the asymptotic calculations.



\section*{{\bf Description of the supplement}}

Section S-1 contains proof of the result associated with Section 4.2,
Section S-2 contains proofs of the results presented in Section 5; the proofs of the results
provided in Section 6 are given in Section S-3, and Section S-4 contains proofs of the results
associated with Section 8. An overview of the asymptotic calculations associated with Section 9
is provided in Section S-5.
Finally, in Section S-6, the ``large $p$, small $n$" problem for both EW and SB is investigated.



\newpage

\begin{center}
{\bf \LARGE Supplementary Material} 
\end{center}

Throughout, we refer to our main manuscript as MB.

\section{{\bf Proofs of results associated with Section 4 of MB}}

\subsection{{\bf Proof of the result presented in Section 4.2 of MB}}

\begin{lemma}
\label{lemma:lemma_sigma_n}
Under the data generating true density $f_0$,
$\frac{1}{n}\sum_{j=1}^{M_n}n_j\left(\bar Y_j - \bar Y\right)^2\rightarrow 0$, a.s.
if $1<M_n \prec O(n)$ (for any two sequences $a^{(1)}_n$ and $a^{(2)}_n$ we say $a^{(1)}_n\prec a^{(2)}_n$ if
 $\frac{a^{(1)}_n}{ a^{(2)}_n}\rightarrow 0$).
\end{lemma}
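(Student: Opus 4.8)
The plan is to split the between-groups sum of squares about the true mean, reduce to a single nonnegative functional whose expectation is explicitly $O(M_n/n)$, and then upgrade the resulting mean-square control to almost sure convergence. Write $\mu_T$ for the mean of $f_0$ (its variance being $\sigma_T^2$), and center about $\mu_T$. First I would record the algebraic identity
\[
\frac{1}{n}\sum_{j=1}^{M_n} n_j(\bar Y_j - \bar Y)^2 = \frac{1}{n}\sum_{j=1}^{M_n} n_j(\bar Y_j - \mu_T)^2 - (\bar Y - \mu_T)^2,
\]
obtained by writing $\bar Y_j - \bar Y = (\bar Y_j - \mu_T) - (\bar Y - \mu_T)$ and using $\sum_j n_j(\bar Y_j - \mu_T) = n(\bar Y - \mu_T)$ together with $\sum_j n_j = n$. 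By the strong law of large numbers $(\bar Y - \mu_T)^2 \to 0$ a.s.; since the left-hand side and the first term on the right are both nonnegative, it suffices to show that $\tilde B_n := \frac{1}{n}\sum_{j=1}^{M_n} n_j(\bar Y_j - \mu_T)^2 \to 0$ a.s.

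The key computation is the expectation of $\tilde B_n$. Treating the allocation $z$ as fixed, the centered observations $Y_t - \mu_T$ are independent with mean $0$ and variance $\sigma_T^2$, so $E[(\bar Y_j - \mu_T)^2] = \sigma_T^2/n_j$, whence
\[
E[\tilde B_n] = \frac{1}{n}\sum_{j=1}^{M_n} n_j\cdot\frac{\sigma_T^2}{n_j} = \frac{M_n\sigma_T^2}{n}.
\]
This identity holds for every allocation vector $z$ with $M_n$ nonempty groups, and under the hypothesis $M_n \prec O(n)$ the right-hand side tends to $0$; thus $E[\tilde B_n] \to 0$ at a rate that does not depend on the composition of $z$, which is the correct reading of the ``for any allocation vector'' clause. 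Since $\tilde B_n \ge 0$, this already yields convergence in $L^1$ and in probability.

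To obtain almost sure convergence I would bound the variance. Writing $\tilde B_n = \frac{1}{n}\sum_j S_j^2/n_j$ with $S_j = \sum_{t:z_t=j}(Y_t - \mu_T)$ and using independence across groups together with the standard fourth-moment formula $E[S_j^4] = n_j\mu_4 + 3n_j(n_j-1)\sigma_T^4$, where $\mu_4 = E[(Y-\mu_T)^4] < \infty$ by the moment assumptions of Section \ref{sec:true_distribution}, one finds $\mathrm{Var}(S_j^2/n_j) \le \mu_4/n_j + 2\sigma_T^4 \le \mu_4 + 2\sigma_T^4$, and hence
\[
\mathrm{Var}(\tilde B_n) = \frac{1}{n^2}\sum_{j=1}^{M_n}\mathrm{Var}\!\left(\frac{S_j^2}{n_j}\right) \le \frac{C M_n}{n^2}
\]
for a constant $C$ depending only on $\sigma_T^2$ and $\mu_4$. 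Combining this with $E[\tilde B_n]\to 0$ via Chebyshev's inequality gives $\tilde B_n\to 0$ in probability with an explicit rate; I would then extract almost sure convergence by Borel--Cantelli along a polynomially spaced subsequence (e.g.\ $n_k = k^2$, for which $\sum_k M_{n_k}/n_k^2 \le \sum_k 1/k^2 < \infty$ because $M_n \prec n$), and control the oscillation of $\tilde B_n$ between consecutive subsequence indices to fill the gaps.

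The hard part will be precisely this last upgrade to almost sure convergence. Because the data form a triangular array and the allocation $z$ may change completely with $n$, the quantity $\tilde B_n$ is not a monotone functional of a single growing sample, so the usual device of interpolating between subsequence points by monotonicity is unavailable and must be replaced by a maximal inequality or a direct oscillation estimate. The naive alternative of a union bound over all $M_n^{\,n}$ allocations is hopelessly lossy, which is why the uniformity needed here is supplied at the level of the mean by the identity $E[\tilde B_n] = M_n\sigma_T^2/n$ rather than by a genuinely uniform almost-sure statement.
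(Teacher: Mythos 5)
Your opening moves coincide with the paper's: the same decomposition about $\mu_T$, the same conditional-on-$z$ expectation computation (the paper gets $(M_n-1)\sigma_T^2/n$ for the combined quantity, you get $M_n\sigma_T^2/n$ for the first term alone), and a Chebyshev-type second-moment bound. Your variance bound $\mathrm{Var}(\tilde B_n)\le CM_n/n^2$ is in fact the correct second-moment rate. The genuine gap is the final upgrade to almost sure convergence, and it is not a deferrable technicality: in this setting your proposed route cannot be carried out at all. As the paper emphasizes (and as you yourself note), the observations form a triangular array whose rows are mutually independent and whose allocations change with $n$; consequently the random variables $\tilde B_n$, $n=1,2,\ldots$, are \emph{independent} of one another. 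For independent nonnegative variables, $\tilde B_n\rightarrow 0$ a.s. holds if and only if $\sum_n P(\tilde B_n>\epsilon)<\infty$ for every $\epsilon>0$, by the second Borel--Cantelli lemma. So there is nothing to interpolate between your subsequence points: no maximal inequality or oscillation estimate can close the argument, because non-summable tails would already imply that a.s. convergence is \emph{false}. And your Chebyshev bound does not give summability under the stated hypothesis $M_n\prec O(n)$: take $M_n=n/\log n$, so that $M_n/n^2=1/(n\log n)$, whose sum diverges. (The same phenomenon affects your appeal to the SLLN for $\bar Y$: with independent rows one needs the fourth-moment/Cantelli version, which again rests on summable tails, not Kolmogorov's theorem for a single sequence.)

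The missing idea is therefore a sharper tail bound over the whole sequence, not an oscillation argument, and it is available here: under (\ref{eq:true_density_1}) the data are a normal location mixture with compactly supported mixing distribution and fixed bandwidth $k$, so $Y$ has finite moments of all orders (indeed sub-Gaussian tails). Bounding the fourth central moment of $\tilde B_n$ --- i.e., using eighth moments of the data together with a Rosenthal-type inequality for the $M_n$ independent group terms $\frac{1}{n}\left(S_j^2/n_j-\sigma_T^2\right)$ --- gives $P\left(|\tilde B_n-E\tilde B_n|>\epsilon\right)=O\left(M_n^2/n^4\right)=O\left(1/n^2\right)$, which is summable since $M_n\le n$; the first Borel--Cantelli lemma then yields the a.s. statement along the full sequence. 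For comparison, the paper's own proof also applies Chebyshev plus Borel--Cantelli directly to the whole sequence, but the variance order it asserts, $2^n/\left(n(M_n)^n\right)$, is exponentially small and hard to justify; your $O(M_n/n^2)$ is the honest second-moment rate, which is precisely why a higher-moment tail bound is needed to finish the proof.
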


\begin{proof}
Let $C_{n}^{'} = \dfrac{\sum_{j=1}^{M_n}n_j\left(\bar Y_j - \bar Y\right)^2}{\sum_{j=1}^{M_n}n_j}$. 
We recall that $\bY_{n}$
form a triangular array, 
and the $n$-th row of that array is summarized by the statistic $C_{n}^{'}$.
Since the random variables of a particular row of that array are independent of the random variables of 
the other rows, 
$C'_{n}$ are independent among themselves. \\

Suppose $\mu_{T}$ is the true population mean and $\sigma_{T}^{2}$ is the true population variance 
(both of which are assumed to be finite). Since all $Y_i$'s are from same true density $f_0$, 
under $f_0$, $E^{f_0}(Y_i) = \mu_{T}$ and 
$V^{f_0}(Y_i) = \sigma_{T}^{2}$.
\begin{eqnarray}
&&E^{f_0}\left(\dfrac{\sum_{j=1}^{M_n}n_j\left(\bar Y_j - \bar Y\right)^2}{\sum_{j=1}^{M_n}n_j}\right) \nonumber\\
&=& E_{z}\left[E_{Y\vert z}^{f_0}\left(\dfrac{\sum_{j=1}^{M_n}n_j\left(\bar Y_j - \bar Y\right)^2}{\sum_{j=1}^{M_n}n_j}\right)\right].
\label{expec_sb_sigma_n}
\end{eqnarray}

Note that
\begin{eqnarray}
&&E_{Y\vert z}^{f_0}\left(\dfrac{\sum_{j=1}^{M_n}n_j\left(\bar Y_j - \bar Y\right)^2}{\sum_{j=1}^{M_n}n_j}\right)\nonumber\\
&=& \frac{1}{n}E_{Y\vert z}^{f_0}\left(\sum_{j=1}^{M_n}n_j(\bar Y_j-\mu_{T})^2 - n(\bar Y-\mu_{T})^2\right)\nonumber\\
&=& \frac{M_n-1}{n}\sigma^{2}_{T} = \mu_{n}^{*}, \label{eq:mean_cond}
\end{eqnarray}
noting the fact that since, given that $Y\sim f_0$, $Y$ and $Z$ are independent, 
$V_{Y\vert z}^{f_0}(Y_i) = \sigma^{2}_{T}$. 
Hence, 
\begin{eqnarray}
E^{f_0}\left(\dfrac{\sum_{j=1}^{M_n}n_j\left(\bar Y_j - \bar Y\right)^2}{\sum_{j=1}^{M_n}n_j}\right) 
= \frac{M_n-1}{n}\sigma^{2}_{T} = \mu_{n}^{*}.
\label{eq:mean_sigma_n_lemma}
\end{eqnarray}
Note that if $M_n > 1$ for all $n$, then $\mu_{n}^{*} > 0$ and if $M_n \prec O(n)$,
then $\mu_{n}^{*}\rightarrow 0$. \\

Similarly we can split the variance term as
\begin{eqnarray}
&&V^{f_0}\left(\dfrac{\sum_{j=1}^{M_n}n_j\left(\bar Y_j - \bar Y\right)^2}{\sum_{j=1}^{M_n}n_j}\right) \nonumber\\
&=& V_{z}\left[\frac{1}{n}E_{Y\vert z}^{f_0}\left(\sum_{j=1}^{M}n_j(\bar Y_j-\mu_{T})^2 - n(\bar Y-\mu_{T})^2\right)\right]\nonumber\\
&+& E_{z}\left[V_{Y\vert z}^{f_0}\left(\frac{1}{n}\sum_{j=1}^{M}n_j(\bar Y_j-\mu_{T})^2 - n(\bar Y-\mu_{T})^2\right)\right]. 
\label{eq:var_cond}
\end{eqnarray}
From (\ref{eq:mean_cond}) we have $E_{Y\vert z}^{f_0}\left(\sum_{j=1}^{M}n_j(\bar Y_j-\mu_{T})^2 - n(\bar Y-\mu_{T})^2\right)$
is free of $z$. So the first term of (\ref{eq:var_cond}) is 0. Easy calculations shows that
the order the second term of (\ref{eq:var_cond}) is $\frac{2^n}{n\times (M_{n})^{n}}$.\\
Thus
\begin{eqnarray}
V^{f_0}\left(\dfrac{\sum_{j=1}^{M_n}n_j\left(\bar Y_j - \bar Y\right)^2}{\sum_{j=1}^{M_n}n_j}\right) = 
O\left(\frac{2^n}{n\times (M_{n})^{n}}\right).
\label{eq:var_sigma_n_lemma}
\end{eqnarray}
Note that for $M_n \geq 2$, $V^{f_0}\left(\dfrac{\sum_{j=1}^{M_n}n_j\left(\bar Y_j - \bar Y\right)^2}{\sum_{j=1}^{M_n}n_j}\right)$ 
converges to 0. 
Hence, under $f_0$, 
$\bigg\vert\dfrac{\sum_{j=1}^{M_n}n_j\left(\bar Y_j - \bar Y\right)^2}{\sum_{j=1}^{M_n}n_j} - \mu_{n}^{*}\bigg\vert\rightarrow 0$, 
in probability, for $M_n > 1$ and $M_n \prec O(n)$.

Also we note that, under $f_0$,
\begin{eqnarray}
P\left(\left[\dfrac{\sum_{j=1}^{M_n}n_j\left(\bar Y_j - \bar Y\right)^2}{\sum_{j=1}^{M_n}n_j}-\mu_n^{*}\right]^2 > 
\epsilon\right)&\leq&
\frac{1}{\epsilon}E^{f_0}\left(\left[\dfrac{\sum_{j=1}^{M_n}n_j\left(\bar Y_j - \bar Y\right)^2}{\sum_{j=1}^{M_n}n_j}-
\mu_n^{*}\right]^2\right) \nonumber\\
&=& O\left(\frac{2^n}{n\times (M_{n})^{n}}\right). \nonumber
\end{eqnarray}

Thus, for $M_n > 2$, 
\begin{eqnarray}
\sum_{n=1}^{\infty}P\left(\left[\dfrac{\sum_{j=1}^{M_n}n_j\left(\bar Y_j - \bar Y\right)^2}{\sum_{j=1}^{M_n}n_j}-
\mu_n^{*}\right]^2 > 
\epsilon\right) 
< \infty. \nonumber
\end{eqnarray}
Hence we conclude that, under $f_0$, 
$\bigg\vert \dfrac{\sum_{j=1}^{M_n}n_j\left(\bar Y_j - \bar Y\right)^2}{\sum_{j=1}^{M_n}n_j}-
\mu_n^{*} \bigg\vert\rightarrow 0$, a.s., for $M_n > 1$ and $M_n \prec O(n)$. Also we have $\mu_n^{*}\rightarrow 0$, 
a.s. under the
same set of conditions.  
Combining these two results we have
that, under $f_0$, $\dfrac{\sum_{j=1}^{M_n}n_j\left(\bar Y_j - \bar Y\right)^2}{\sum_{j=1}^{M_n}n_j}\rightarrow 0$, a.s., for 
$M_n > 2$ and $M_n \prec O(n)$. Note that, since we assumed $M_n$ to increase with $n$, the condition
$M_n>2$ holds at least after some initial values of $n$. 

\end{proof}

\section{{\bf Proofs of results associated with Section 5 of MB}}

\subsection{{\bf Proofs of results on the EW model}}
\label{subsec:ew_posterior_mean}

\begin{lemma}
\label{lemma:epsilon_ew}
Let $\{b_n\}$ and $\{\sigma_n\}$ be sequences of positive numbers such that   
$\sigma_n\rightarrow 0$, $0<b_n<\sigma_n$ for all $n$, 
$P(\sigma> \sigma_n)/P(b_n<\sigma\leq\sigma_n) 
= O\left(\frac{\epsilon_n}{1-\epsilon_n}\right)$, 
for some sequence of positive constants 
$\epsilon_n >0$ such that $\epsilon_n=o(1)$.
If $|Y_i|<a;i=1,\ldots,n$, then 
\begin{equation}
P(\sigma> \sigma_n\vert \bY_n) = O\left(\epsilon_n^*\right) 
\label{eq:third_part}
\end{equation}
\end{lemma}

\begin{proof}
$P(\sigma> \sigma_n\vert \bY_n) = \dfrac{\int_{\sigma_n}^{\infty}
\int_{\Theta_n} \prod_{j=1}^{n}\frac{1}{\sigma}\phi\left(\frac{Y_j-\theta_j}{\sigma}\right) dG_n(\sigma)
dH_n\left(\Theta_n\right)}{\int_{\sigma}\int_{\Theta_n}\prod_{j=1}^{n}\frac{1}{\sigma}
\phi\left(\frac{Y_j-\theta_j}{\sigma}\right) dG_n(\sigma)dH_n\left(\Theta_n\right)} = \frac{N}{D}$, where $H_n\left(\Theta_n\right)$
is the joint distribution of $\Theta_n$.

Denote $L(\Theta_n, Y) = \prod_{j=1}^{n}\frac{1}{\sigma}\phi\left(\frac{Y_j-\theta_j}{\sigma}\right)$.

\begin{eqnarray}
D&\geq& \int_{\sigma\in (b_n, \sigma_n)}\int_{\Theta_n\in E_n}L(\Theta_n, Y)dG_n(\sigma)dH_n\left(\Theta_n\right)  \nonumber\\
&=& \dfrac{e^{-\left\{\frac{\sum_{i=1}^{n}(Y_i-\theta_i^*)^2}{2{b_n}^2}\right\}}}{(b_n)^n}P(b_n < \sigma \leq \sigma_n)P\left(\Theta_n\in E_n\right),
\nonumber
\end{eqnarray}
 
where $E_n=\{\theta_j\in [-c_1, c_1], i=1,\ldots,n\}$, and $\theta_i^*\in (-c_1, c_1)$, $c_1 > 0$ being a constant. 

Now $|Y_i|<a, |\theta_i^*|<c_1 \Rightarrow (Y_i-\theta_i^*)^2< (a+c_1)^2,\forall i
\Rightarrow \sum_{i=1}^n(Y_i-\theta_i^*)^2< n(a+c_1)^2$. 
Again, from properties of the Polya urn, $H_n\left(\Theta_n\right)\geq \prod_{i=1}^n \frac{\alpha_n}{\alpha_n+n}G_0(\theta_i)\Rightarrow 
P\left(\Theta_n\in E_n\right)\geq (\frac{\alpha_n}{\alpha_n+n})^n H_0^n$.
\\
Now, the function $\frac{1}{\sigma}\exp\{-\frac{n(a+c_1)^2}{2\sigma^2}\}$ is increasing on $b_n<\sigma<\sigma_n$ for
$\sigma_n<\sqrt{n}(2a+c)$. 
Hence, 
\begin{equation}
D\geq \dfrac{e^{\left(\frac{-n(a+c_1)^2}{2{b_n}^2}\right)}}{(b_n)^n}P(b_n < \sigma \leq \sigma_n)\left(\frac{\alpha_n}{\alpha_n+n}\right)^n H_0^n.
\label{eq:deno}
\end{equation}

For the numerator observe that for $\sigma>\sigma_n$, $\dfrac{e^{-\frac{\sum_{i=1}^n(Y_i-\theta_i)^2}{2\sigma^2}}}{\sigma^n}
\leq \frac{1}{(\sigma_n)^n}$. This implies
\begin{eqnarray}
N= \int_{\Theta_n}\int_{{\sigma> \sigma_n}}L(\Theta_n, Y)dG_n(\sigma)dH_n\left(\Theta_n\right)
&\leq& \frac{1}{(\sigma_n)^n}P(\sigma> \sigma_n) \nonumber\\
&=& \frac{O(\epsilon_n)}{(\sigma_n)^n}. 
\label{eq:num}
\end{eqnarray}
Inequalities (\ref{eq:deno}) and (\ref{eq:num}) together imply that
\begin{eqnarray}
P(\sigma> \sigma_n\vert \bY_n) \leq \frac{P(\sigma > \sigma_n)}{P(b_n< \sigma \leq \sigma_n)}\frac{{b_n}^n}{(\sigma_n)^n}
e^{\frac{n(a+c_1)^2}{2{b_n}^2}}\frac{(\alpha_n+n)^n}{(\alpha_n)^n H_0^n} = A_n^{*}, \mbox{say}. \nonumber
\end{eqnarray}

By the assumptions of the lemma, 
$P(\sigma> \sigma_n)/P(b_n<\sigma<\sigma_n) 
= O\left(\frac{\epsilon_n}{1-\epsilon_n}\right)$, 
where $\epsilon_n=o(1)$. 
Thus
$A_n^{*} = O\left(\epsilon_n^*\right)$, where
$\epsilon_n^*=\frac{\epsilon_n}{1-\epsilon_n}e^{\frac{n(a+c_1)^2}{2{b_n}^2}}\frac{(\alpha_n+n)^n}{(\alpha_n)^n H_0^n}$. 
This completes the proof.
\end{proof}

\begin{lemma}
\label{lemma:b_n_ew}
Under the same assumptions as Lemma \ref{lemma:epsilon_ew} and $0 <\sigma_n< a$, the following holds:
\begin{equation}
P\left(\theta_i\in [-a-c, a+c]^{c}\cap\mathbb S, \sigma\leq \sigma_n\vert \bY_n\right) \leq
\frac{C_0}{e^{-1/2}\delta}B_n,
\label{eq:eq:lemma_2}
\end{equation}
where 
$C_0 = \sup_{\sigma}\{\sigma^{-1}\exp(-c^2/4\sigma^2)\}$, and $\delta$ is the lower bound
of the density of $G_0$ on $[-a-c,a+c]$. 
\end{lemma}

\begin{proof} 
This proof is similar to that of Lemma 11 of \ctn{Ghosal07}.
\end{proof}

\begin{lemma}
\label{lemma:A_n}
$\frac{\alpha_n}{\alpha_n+n}A_n=O\left(\frac{\alpha_n}{\alpha_n+n}\right)$, almost surely.
\end{lemma}
\begin{proof} 

Note that, by the mean value theorem for integrals, $\varphi(\theta,\sigma+\hat k_n)
=\left[\int_{-a}^a\frac{1}{\sigma+\hat k_n}\phi\left(\frac{y-\theta}{\sigma+\hat k_n}\right)dy\right]^{-1}
=\left[\frac{2a}{\sigma+\hat k_n}\phi\left(\frac{y^*(\theta,\sigma+\hat k_n)-\theta}{\sigma+\hat k_n}\right)\right]^{-1}$,
where $y^*(\theta,\sigma+\hat k_n)\in[-a,a]$.
Hence, for $y\in [-a,a]$, 
\begin{align}
\frac{\varphi(\theta,\sigma+\hat k_n)}{(\sigma+\hat k_n)\sqrt{2\pi}}
e^{-\frac{\left(y-\theta\right)^2}{2(\sigma+\hat k_n)^2}}\mathbb I_{\{|y|\leq a\}}
&=\frac{1}{2a}\exp\left(\frac{{y^*(\theta,\sigma+\hat k_n)}^2-y^2}{\left(\sigma+\hat k_n\right)^2}\right)
\times\exp\left(2\theta\frac{\left(y-y^*(\theta,\sigma+\hat k_n)\right)}{\left(\sigma+\hat k_n\right)^2}\right)\notag\\
&\leq\frac{1}{2a}\exp\left(\frac{a^2+4a\times\underset{\theta\in\mathbb S}{\sup}~|\theta|}{\eta^2}\right)
=H_{1}~\mbox{(say)},
\label{eq:H_1}
\end{align}
where we recall that $\eta$ is a lower bound of the sequence $\hat k_n$, for $n\geq 1$, for almost all sequences
$\hat k_n$.

Hence, for all $y\in [-a,a]$,
\begin{eqnarray}
A_n(y) &=& \int_{\mathbb S}\frac{\varphi(\theta,\sigma+\hat k_n)}{(\sigma+\hat k_n)\sqrt{2\pi}}
e^{-\frac{\left(y-\theta\right)^2}{2(\sigma+\hat k_n)^2}}dG_0(\theta)\nonumber\\
&\leq& H_{1} \int_{\mathbb S}dG_0(\theta)\nonumber\\
&=& H_{1}.
\label{eq:A}
\end{eqnarray}
Thus $A_n=O(1)$, almost surely, and 
\begin{equation}
\frac{\alpha_n}{\alpha_n+n}A_n \stackrel{a.s.}{=} O\left(\frac{\alpha_n}{\alpha_n+n}\right). \label{eq:order_A}
\end{equation}
\end{proof}
\noindent {\it Remark 1:
For $\alpha_n=O(n^{\omega}); 0<\omega <1$, $\frac{\alpha_n}{\alpha_n+n}\rightarrow 0$, and so,
$A_n=o(1)$, almost surely.
}

\subsubsection{{\bf Proof of Theorem 5.1}}
\label{subsubsec:proof_theorem_5.1}

\begin{proof}

Note that
\begin{eqnarray}
&&E\left(\frac{\varphi(\theta_i,\sigma+\hat k_n)}{(\sigma+\hat k_n)}\phi\left(\frac{y-\theta_i}{\sigma+\hat k_n}\right)
\bigg\vert \bY_n\right) \nonumber\\
&=& \frac{1}{D}\int_{\sigma}\int_{\Theta_n}\frac{\varphi(\theta_i,\sigma+\hat k_n)}{(\sigma+\hat k_n)}
\phi\left(\frac{y-\theta_i}{\sigma+\hat k_n}\right)L(\Theta_n,\bY_n)dH(\Theta_n)dG_n(\sigma)  \nonumber\\ 
&=& J_1+J_2+J_3,  \nonumber\\
\label{eq:lemma3_expec}
\end{eqnarray}
where \\
$
J_1=\frac{1}{D}\int_{R_1}\frac{\varphi(\theta_i,\sigma+\hat k_n)}{(\sigma+\hat k_n)}
\phi\left(\frac{y-\theta_i}{\sigma+\hat k_n}\right)L(\Theta_n,\bY_n)dH(\Theta_n)dG_n(\sigma),$\\
$J_2=\frac{1}{D}\int_{R_2}\frac{\varphi(\theta_i,\sigma+\hat k_n)}{(\sigma+\hat k_n)}
\phi\left(\frac{y-\theta_i}{\sigma+\hat k_n}\right)L(\Theta_n,\bY_n)dH(\Theta_n)dG_n(\sigma),$\\
$J_3=\frac{1}{D}\int_{R_3}\frac{\varphi(\theta_i,\sigma+\hat k_n)}{(\sigma+\hat k_n)}
\phi\left(\frac{y-\theta_i}{\sigma+\hat k_n}\right)L(\Theta_n,\bY_n)dH(\Theta_n)dG_n(\sigma),$\\
$
R_1=\{\theta_i\in [-a-c, a+c], \sigma\leq \sigma_n\},$\\
$R_2=\{\theta_i\in [-a-c, a+c]^c\cap\mathbb S, \sigma\leq \sigma_n\},$\\
$R_3=\{\sigma> \sigma_n\}.$\\

Then it follows from 
Lemma \ref{lemma:epsilon_ew} that
\begin{equation}
J_3\leq H_{1} P\left(R_3\big\vert \bY_n\right)\leq H_{1}\epsilon_n^*,
\label{eq:lemma3_part3}
\end{equation}
\\[5mm]
Using 
Lemma \ref{lemma:b_n_ew} we obtain
\begin{equation}
J_2\leq H_{1} P\left(R_2\big\vert \bY_n\right)\leq H_{1} B_n. \label{eq:lemma3_part2}
\end{equation}

\begin{equation}
J_1=\frac{\varphi(\mu^*_n(y),\sigma+\hat k_n)}{(\upsilon_n(y)+\hat k_n)}
\phi\left(\frac{y-\mu_n^*(y)}{\upsilon_n(y)+\hat k_n}\right)
\left(1-P\left(R_2\big\vert \bY_n\right)-P\left(R_3\big\vert \bY_n\right)\right),
\label{eq:lemma3_part1}
\end{equation}
where, for every $y$, $\mu_n^*(y)\in (-a-c,a+c)$ and $\upsilon_n(y)\in (0,\sigma_n)$, applying 
the general mean value theorem ($GMVT$).

Let us choose $\epsilon_n$ and $\sigma_n$ in a way such that $\epsilon_n^*$ and $B_n$ converge to zero as $n\rightarrow \infty$.
Now we note that, in $J_{1}$, the range of $\theta_i$ remains the same for all $n$. It is the range of $\sigma$ that varies with $n$ and the point
$\upsilon_n(y)$ varies with $n$. Moreover, $\hat k_n$ also changes with $n$. These have the effect of 
varying $\mu^*_n(y)$ since the kernel depends upon both $\theta_i$, $\sigma$ and $\hat k_n$. 
This implies that $\mu_n^*(y)$ and $\upsilon_n(y)$ depend upon $\sigma_n$ and $\hat k_n$, so that 
$\mu_n^*(y)=\mu(\sigma_n,\hat k_n,y)$,
$\upsilon_n(y)=\varPsi(\sigma_n,\hat k_n,y)$, such that $\varPsi(0,k,y)=0$ (note that $\upsilon_n(y)< \sigma_n$ and 
$\sigma_n\rightarrow 0$). We also assume that $\mu(0,k,y)= \mu^*(y)$.
We assume that $\mu$ and $\varPsi$ are continuously partially differentiable with respect to the
first two arguments at least once; indeed we can
choose $\mu$ and $\varPsi$ to be smooth functions such that $\mu_n^*(y)=\mu(\sigma_n,\hat k_n,y)$, $\mu(0,k,y)= \mu^*(y)$, 
$\upsilon_n(y)=\varPsi(\sigma_n,\hat k_n,y)$, and $\varPsi(0,k,y)=0$ .

Then we obtain the following Taylor's series expansion, letting $x=\sigma_n$ and $\tilde k=\hat k_n$, and expanding around
$x=0$ and $\tilde k = k$:
\begin{eqnarray}
&&\frac{\varphi(\mu(\sigma_n,\hat k_n,y),\varPsi(\sigma_n,\hat k_n,y)+\hat k_n)}
{(\varPsi(\sigma_n,\hat k_n,y)+\hat k_n)}\phi\left(\frac{y-\mu(\sigma_n,\hat k_n,y)}
{\varPsi(\sigma_n,\hat k_n,y)+\hat k_n}\right) \nonumber\\
&=& \frac{\varphi(\mu(0,k,y),k)}{k}\phi\left(\frac{y-\mu(0,k,y)}{k}\right)\nonumber\\
&&\qquad+ 
\sigma_n\frac{\partial}{\partial \sigma_n}\left(\frac{\varphi(\mu(\sigma_n,\hat k_n,y),
\varPsi(\sigma_n,\hat k_n,y)+\hat k_n)}
{(\varPsi(\sigma_n,\hat k_n,y)+\hat k_n)}\phi\left(\frac{y-\mu(\sigma_n,\hat k_n,y)}
{\varPsi(\sigma_n,\hat k_n,y)+\hat k_n}\right)\right)\bigg\vert_{\sigma_n=\sigma^*_n,\hat k_n=\hat k^*_n}\notag\\
&&\qquad +
(\hat k_n-k)\frac{\partial}{\partial \hat k_n}\left(\frac{\varphi(\mu(\sigma_n,\hat k_n,y),
\varPsi(\sigma_n,\hat k_n,y)+\hat k_n)}
{(\varPsi(\sigma_n,\hat k_n,y)+\hat k_n)}\phi\left(\frac{y-\mu(\sigma_n,\hat k_n,y)}
{\varPsi(\sigma_n,\hat k_n,y)+\hat k_n}\right)\right)\bigg\vert_{\sigma_n=\sigma^*_n,\hat k_n=\hat k^*_n},
\nonumber
\end{eqnarray}
where $\sigma^*_n$ lies between $0$ and $\sigma_n$, and $\hat k^*_n$ lies between $\hat k_n$ and $k$.
Noting that the terms are bounded for any
$y\in [-a,a]$, we have, almost surely,
\begin{equation}
\sup_{|y|\leq a}\bigg\vert\frac{\varphi(\mu_n^*(y),\upsilon_n(y)+\hat k_n)}{(\upsilon_n(y)+\hat k_n)}
\phi\left(\frac{y-\mu_n^*(y)}{\upsilon_n(y)+\hat k_n}\right)-
\frac{\varphi(\mu(0,k,y),k)}{k}\phi\left(\frac{y-\mu(0,k,y)}{k}\right)\bigg\vert=O(\sigma_n+|\hat k_n-k|).  
\label{eq:expansion_lemma3}
\end{equation}
Thus, for any $y\in[-a,a]$, we have that 
$\frac{\varphi(\mu_n^*(y),\upsilon_n(y)+\hat k_n)}{(\upsilon_n(y)+\hat k_n)}
\phi\left(\frac{y-\mu_n^*(y)}{\upsilon_n(y)+\hat k_n}\right)\rightarrow 
\frac{\varphi(\mu^*(y),k)}{k}
\phi\left(\frac{y-\mu^*(y)}{k}\right)$, almost surely.

It follows from (\ref{eq:expansion_lemma3}) that
\begin{equation}
\sup_{|y|\leq a}\bigg\vert J_1-\frac{\varphi(\mu^*(y),k)}{k}\phi\left(\frac{y-\mu^*(y)}{k}\right)\bigg\vert=
O\left(B_n+\epsilon_n^*+\sigma_n+|\hat k_n-k|\right). \label{eq:obs_bias}
\end{equation}

Finally, since $B_n$ and $\epsilon_n^*$ can be made to converge to 0, $\sigma_n\rightarrow 0$
and $|\hat k_n-k|\stackrel{a.s.}{\rightarrow} 0$, $J_2, J_3\rightarrow 0$, 
$J_1\rightarrow \frac{\varphi(\mu^*(y),k)}{k}
\phi\left(\frac{y-\mu^*(y)}{k}\right)$.
Also it holds that
\begin{eqnarray}
\int_y J_1 dy &=&\frac{1}{D}\int_{|y|\leq a}\int_{R_1}\frac{\varphi(\theta_i,\sigma_k)}{(\sigma+k)}
\phi\left(\frac{y-\theta_i}{\sigma+k}\right)
L(\Theta_n,\bY_n)dH(\Theta_n)dG_n(\sigma)dy\nonumber\\
&=& \ \ 1-P\left(R_2\big\vert \bY_n\right)-P\left(R_3\big\vert \bY_n\right)\nonumber\\
&\rightarrow& 1,~\mbox{almost surely}.\label{eq:J_1_integral_1}
\end{eqnarray}
Again, since we have proved above that 
$J_1\rightarrow \frac{\varphi(\mu^*(y),k)}{k}
\phi\left(\frac{y-\mu^*(y)}{k}\right)$, if we can show that for all $n$, 
$J_1$ given by (\ref{eq:lemma3_part1}) is bounded above by an integrable function $h(y)$ for every $y\in[-a,a]$,
then it follows from the dominated convergence
theorem (DCT) that
\begin{equation}
\int_{|y|\leq a} J_1dy\rightarrow \int_{|y|\leq a} \frac{\varphi(\mu^*(y),k)}{k}\phi\left(\frac{y-\mu^*(y)}{k}\right)dy.
\label{eq:J_1_integral_2}
\end{equation}
But DCT clearly holds since
$J_1\leq \frac{\varphi(\mu_n^*(y),\upsilon_n(y)+\hat k_n)}{\upsilon_n(y)+\hat k_n}
\phi\left(\frac{y-\mu_n^*(y)}{\upsilon_n(y)+\hat k_n}\right)$, and since both $y$ and $\mu^*_n(y)$ are in compact sets
and $\hat k_n\geq \eta>0$ for all $n\geq 1$.


It follows from (\ref{eq:J_1_integral_1}) and (\ref{eq:J_1_integral_2}) that
\begin{equation}
\int_{|y|\leq a}\frac{\varphi(\mu^*(y),k)}{k}\phi\left(\frac{y-\mu^*(y)}{k}\right)dy=1,
\label{eq:J_1_integral_3}
\end{equation}
showing that $f_0(y)=\frac{\varphi(\mu^*(y),k)}{k}\phi\left(\frac{y-\mu^*(y)}{k}\right)\mathbb I_{\{|y|\leq a\}}$ is a density.
Indeed, we consider $f_0$ as the true data-generating density.

Now note that,
\begin{eqnarray}
E\left(\hat f_{EW}(y\mid \Theta_n,\sigma)\bigg\vert \bY_n\right)=O\left(\frac{\alpha_n}{\alpha_n+n}\right)
+\frac{1}{\alpha_n+n}\sum_{i=1}^{n}E\left(\frac{\varphi(\theta_i,\sigma+k)}{(\sigma+k)}
\phi\left(\frac{y-\theta_i}{\sigma+k}\right)\bigg\vert \bY_n\right), \nonumber
\end{eqnarray}
where the first term is thanks to (\ref{eq:order_A}).
Further note that, 
\begin{equation}
\frac{1}{\alpha_n+n}\sum_{i=1}^{n}E\left(\frac{\varphi(\theta_i,\sigma+k)}{(\sigma+k)}
\phi\left(\frac{y-\theta_i}{\sigma+k}\right)\bigg\vert \bY_n\right)=\frac{n}{\alpha_n+n}\times (J_1+J_2+J_3). 
\label{eq:expec_split}
\end{equation}

\begin{eqnarray}
&&E\left(\hat f_{EW}(y\mid \Theta_n,\sigma)\bigg\vert \bY_n\right)
-f_0(y)\nonumber\\
&&\ \ =O\left(\frac{\alpha_n}{\alpha_n+n}\right)-\frac{\alpha_n}{\alpha_n+n}f_0(y)\nonumber\\
&&\quad\quad +\frac{n}{\alpha_n+n}(J_1-f_0(y))+\frac{n}{\alpha_n+n}(J_2+J_3)\nonumber
\end{eqnarray}
Since 
$f_0(y)$ is uniformly bounded, we have
\begin{eqnarray}
&&\bigg\vert E\left(\hat f_{EW}(y\mid \Theta_n,\sigma)\bigg\vert \bY_n\right)
-f_0(y)\bigg\vert\nonumber\\
&&\ \ \leq O\left(\frac{\alpha_n}{\alpha_n+n}\right)
+\frac{n}{\alpha_n+n}\left(\vert J_2\vert+\vert J_3\vert\right)\nonumber\\
&&\quad\quad\quad+\frac{n}{\alpha_n+n}\vert J_1-f_0(y)\vert\nonumber
\end{eqnarray}
It follows from 
(\ref{eq:lemma3_part3}), (\ref{eq:lemma3_part2}), and (\ref{eq:obs_bias})
and the fact that the orders of $J_1,J_2,J_3$ are independent of $y$, that 
\begin{eqnarray}
&&\sup_{|y|\leq a}\bigg\vert E\left(\hat f_{EW}(y\mid \Theta_n,\sigma)\bigg\vert \bY_n\right)
-f_0(y)\bigg\vert\nonumber\\
&&\quad\quad =O\left(\frac{\alpha_n}{\alpha_n+n}+\frac{n}{\alpha_n+n}(B_n+\epsilon^*_n+\sigma_n+|\hat k_n-k|)\right),\nonumber
\end{eqnarray}
proving the theorem.

\end{proof}

\subsection{{\bf Proofs of results on the SB model}}
\label{subsec:sb_posterior_mean}

\begin{lemma}
Under the same assumptions as in Lemma \ref{lemma:epsilon_ew} and Lemma \ref{lemma:b_n_ew},  
\label{lemma:sb_epsilon}
$$P\left(\sigma> \sigma_n\vert \bY_n\right)=O(\epsilon_{M_n}^*),$$ where \\
$\epsilon_{M_n}^*=
\frac{\epsilon_n}{1-\epsilon_n}\exp\left(\frac{n(a+c_1)^2}{2(b_n)^2}\right)\frac{(\alpha_n+M_{n})^{M_n}}{\alpha_n^{M_n} H_0^{M_n}}$. 
\end{lemma}

\begin{proof}
$P\left(\sigma> \sigma_n\vert \bY_n\right) = \dfrac{\sum_{z}\int_{\sigma_n}^{\infty}
\int_{\Theta_{M_n}}  L(\Theta_{M_n}, z, \bY_n)dH(\Theta_{M_n})dG_n(\sigma)}{\sum_{z}\int_{0}^{\infty}
\int_{\Theta_{M_n}}L(\Theta_{M_n}, z, \bY_n)dH(\Theta_{M_n})dG_n(\sigma)} = \frac{N}{D}$, where $H(\Theta_{M_n})$ is the joint distribution of $\Theta_{M_n}$ and
\begin{eqnarray}
L(\Theta_{M_n}, z, \bY_n)=\prod_{j=1}^{M_n}\frac{1}{\sigma^{n_j}}e^{-\frac{1}{2}\sum_{t:z_t=j}\left(\frac{Y_{t}-\theta_j}{\sigma}\right)^2},
\nonumber
\end{eqnarray}
where $n_j=\#\{t:z_t=j\}$ (for any set $\mathbb A$, $\#\mathbb A$ denotes the cardinality of 
the set $\mathbb A$).
Let $E^{*}=$\{all $\theta_{l}\in \Theta_{M_n}$ are in $[-c_1, c_1]$\}.

Then,
\begin{eqnarray}
&&\int_{0}^{\infty}\int_{\Theta_{M_n}}L(\Theta_{M_n}, z, \bY_n)dH(\Theta_{M_n})dG_n(\sigma) \nonumber\\
&\geq& \int_{{\sigma\in (b_n, \sigma_n)}}\int_{{\Theta_{M_n}\in E^{*}}}L(\Theta_{M_n}, z, \bY_n)dH(\Theta_{M_n})dG_n(\sigma). \nonumber\\
\label{eq:lemma_sb_ep}
\end{eqnarray}

Now note that 
\begin{eqnarray}
&&|Y_{t}|<a, |\theta_j^*|< c_1 \Rightarrow (Y_{t}-\theta_j^*)^2< (a+c_1)^2,  \nonumber\\
&\Rightarrow& \sum_{j=1}^{M_n}\sum_{t:z_t=j}\left(Y_{t}-\theta_j^*\right)^2<
 n(a+c_1)^2.
\nonumber
\end{eqnarray}

Again, from the Polya urn scheme we have $H(\Theta_{M_n})\geq \prod_{j=1}^{M_n} \frac{\alpha_n}{\alpha_n+M_n}G_0(\theta_j)$ 
which implies
$P\left(\Theta_{M_n}\in E^{*}\right)\geq 
\left(\frac{\alpha_n}{\alpha_n+M_n}\right)^{M_n} H_0^{M_n}$, where $H_0 = \int_{-c_1}^{c_1} G_0(\theta)d\theta$.
Hence, for $\sigma_n<\sqrt{n}(a+c_1)$,
\begin{equation}
D\geq M_n^n\dfrac{\exp\left(\frac{-n(a+c_1)^2}{2(b_n)^2}\right)}{b_n}P(b_n<\sigma\leq \sigma_n)\left(\frac{\alpha_n}{\alpha_n+M_n}\right)^{M_n} H_0^{M_n}.
\label{eq:deno_sb}
\end{equation}

Again, in the same way as Lemma \ref{lemma:epsilon_ew}, 
$N\leq M_n^n\frac{1}{(\sigma_n)^n}P(\sigma> \sigma_n)$.
Also, by our assumption, 
$P(\sigma> \sigma_n)/P(b_n<\sigma\leq \sigma_n)=O\left(\frac{\epsilon_n}{1-\epsilon_n}\right)$, so
it follows that, 
\begin{equation}
P\left(\sigma> \sigma_n\vert \bY_n\right)=O(\epsilon_{M_n}^*),
\nonumber
\end{equation}
where
\begin{equation}
\epsilon^*_{M_n}=\frac{\epsilon_n}{1-\epsilon_n}\exp\left(\frac{n(a+c_1)^2}{2(b_n)^2}\right)
\frac{(\alpha_n+M_n)^{M_n}}{\alpha_n^{M_n} H_0^{M_n}}. \nonumber\\
\end{equation}
Hence, the proof follows.
\end{proof}

\begin{lemma}
\label{lemma:sb_b_n}
Under the same assumptions as in Lemma \ref{lemma:b_n_ew}, 
$P(Z\in R_1^*, \Theta_{M_{n}}\in E^c, \sigma\leq \sigma_n\vert \bY_n)= O((M_{n}-1)B_{M_{n}})$, where 
$B_{M_{n}}=\frac{(\alpha_n+M_{n})}{\alpha_n} e^{\left(-\frac{c^2}{4\sigma_n^2}\right)}$.\\
\end{lemma}

\begin{proof} 
Clearly, $E^c$ =\{at least one $\theta_k$ in the likelihood is in $[-a-c, a+c]^c\cap\mathbb S$\}. We have
\begin{eqnarray}
&&P(Z\in R_1^*, \Theta_{M_n}\in E^c, \sigma\leq \sigma_n\vert \bY_n) \nonumber\\
&=&\dfrac{\sum_{j=1}^{(M_n-1)}\sum_{l=1}^{j}\sum_{z\in V_j}\int_{\sigma\leq \sigma_n}\int_{W_l}L(\Theta_{M_n}, z, \bY_n)dG_n(\sigma)dH(\Theta_{M_n})}{
\sum_{z}\int_{\sigma}\int_{\Theta_{M_n}} L(\Theta_{M_n}, z, \bY_n) dG_n(\sigma)dH(\Theta_{M_n})} \nonumber\\
&\leq& \dfrac{\sum_{j=1}^{(M_n-1)}\sum_{l=1}^{j}\sum_{z\in V_j}\int_{\sigma\leq \sigma_n}\int_{W_l}L(\Theta_{M_n}, z, \bY_n) dG_n(\sigma)
dH(\Theta_{M_n})}{\sum_{j=1}^{(M_n-1)}\sum_{z\in V_j}\int_{\sigma<\mathcal K} \int_{\Theta_{M_n}} L(\Theta_{M_n}, z, \bY_n) dG_n(\sigma)dH(\Theta_{M_n})}. \nonumber\\
\label{eq:less_sb}
\end{eqnarray}
In the denominator of the last step, $\mathcal K$ is such that $0<\sigma_n < \mathcal K$ for all $n$
(since $\sigma_n\rightarrow 0$ as $n\rightarrow\infty$, $\sigma_n$ must be bounded).

Note that 
\begin{equation}
L(\Theta_{M_n}, z, \bY_n)=\prod_{j=1}^{M_n}\frac{1}{\sigma^{n_j}}e^{-\frac{1}{2}\sum_{t:z_t=j}\left(\frac{Y_{t}-\theta_j}{\sigma}\right)^2}
=\prod_{j=1}^{M_n}\frac{1}{\sigma^{n_j}}e^{-\frac{1}{2}\sum_{t:z_t=j}\left(\frac{Y_{t}-\bar Y_j}{\sigma}\right)^2}
e^{-\frac{n_j}{2}\left(\frac{\bar Y_j-\theta_j}{\sigma}\right)^2},
\label{eq:L_split}
\end{equation}
where $n_j=\#\{l:z_l=j\}$ and $\bar Y_j=\frac{\sum_{l:z_l=j}Y_l}{n_j}$.\\

Let $H_{j}(\theta_j\mid \Theta_{-jM_n})$ be the conditional distribution of $\theta_j$ given $\Theta_{-jM_n}$
and $H_{-j}(\Theta_{-jM_n})$ the joint distribution of $\Theta_{-jM_n}$, where $\Theta_{-jM_n}=\Theta_{M_n}\setminus \theta_j$. 
Since
\begin{equation}
H_{j}(\theta_j\mid \Theta_{-jM_n})=\frac{\alpha_n}{\alpha_n+M_n-1}G_0(\theta_j)+\frac{1}{\alpha_n+M_n-1}\sum_{l=1, l\neq j}^{M_n}\delta_{\theta_l},
\label{eq:sb_dirichlet}
\end{equation}
and $\sigma<\mathcal K$ in the integral associated with the denominator, 
we have in the denominator for each $z\in V_{j}$, 
\begin{eqnarray}
&&\int_{\theta_j} e^{-\frac{n_j}{2}\left(\frac{\bar Y_j-\theta_j}{\sigma}\right)^2}dH_j(\theta_j\mid \Theta_{-jM_n}) \nonumber\\
&\geq& \frac{\alpha_n}{\alpha_n+M_n} 
\int_{\theta_j\in[\bar Y_j-\frac{\sigma}{n_j^{1/2}},{\bar Y_j+\frac{\sigma}{n_j^{1/2}}}]\cap\mathbb S}
\exp\left(-\frac{n_j(\bar Y_j-\theta_j)^2}{2\sigma^2}\right)dG_0(\theta_j)  \nonumber\\
&\geq& \frac{\alpha_n}{\alpha_n+M_n}e^{-1/2} \frac{\sigma}{n_j^{1/2}}\delta,  
\label{eq:lemma8_d1_sb}
\end{eqnarray}
where $\delta$ is the lower bound of the density of $G_0$ on 
$[\bar Y_j-\frac{\sigma}{n^{1/2}_j},\bar Y_j+\frac{\sigma}{n^{1/2}_j}]\cap\mathbb S$
(we assume that the density of $G_0$ is strictly positive in neighborhoods of $\bar Y_j$, for each $j$;
since $\sigma<\mathcal K$ the neighborhoods must be bounded so that the lower bound of the density 
on such neighborhoods can be assumed to be bounded away from zero).

Thus for each $z\in V_{j}$ we have,
\begin{eqnarray}
&&\int_{\sigma<\mathcal K} \int_{\Theta_{M_n}} L(\Theta_{M_n}, z, \bY_n) dG_n(\sigma)dH(\Theta_{M_n})  \nonumber\\
&\geq& \frac{\alpha_n}{(\alpha_n+M_n)n_j^{1/2}}e^{-1/2}\delta \nonumber\\
&&\ \ \times\int_{0}^{\mathcal K}\int_{\Theta_{-jM_n}} \left[\frac{1}{\sigma^{(n-1)}} \prod_{l=1}^{M_n} e^{-\frac{1}{2}\sum_{t:z_t=l}
\left(\frac{Y_{t}-\bar Y_l}{\sigma}\right)^2} \right. \nonumber\\
&&\hspace{25mm}\left. \prod_{l=1, l\neq j}^{M_n}e^{-\frac{n_l}{2}\left(\frac{\bar Y_l-\theta_l}{\sigma}\right)^2}
dH_{-j}(\Theta_{-jM_n})dG_n(\sigma)\right]. \nonumber\\
&&=\frac{\alpha_n}{(\alpha_n+M_n)n_j^{1/2}}e^{-1/2}\delta\times \zeta_n(j,z),\ \ \mbox{(say)},
\end{eqnarray}
where
\begin{eqnarray}
&&\zeta_n(j,z)=\int_{0}^{\mathcal K}\int_{\Theta_{-jM_n}} \left[\frac{1}{\sigma^{(n-1)}} \prod_{l=1}^{M_n} e^{-\frac{1}{2}\sum_{t:z_t=l}
\left(\frac{Y_{t}-\bar Y_l}{\sigma}\right)^2} \right. \nonumber\\
&&\hspace{25mm}\left. \prod_{l=1, l\neq j}^{M_n}e^{-\frac{n_l}{2}\left(\frac{\bar Y_l-\theta_l}{\sigma}\right)^2}
dH_{-j}(\Theta_{-jM_n})dG_n(\sigma)\right]. \nonumber\\
\end{eqnarray}

To obtain an upper bound for the numerator we note that for each $z\in V_{j}$ and $j=1(1)M_n$, $|\bar Y_j|<a$ (since each $|Y_l|<a$, $l=1, \ldots, n$) and 
$\theta_j\in [-a-c, a+c]^c\cap\mathbb S$. Since $\sigma<\sigma_n$ for the integral associated with the numerator, 
and $n_j\geq 1$, this implies
\begin{eqnarray}
&&\frac{1}{\sigma}\exp\left(-n_j(\bar Y_j-\theta_j)^2/2\sigma^2\right) \nonumber\\
&\leq& \frac{1}{n_j^{1/2}}\frac{n_j^{1/2}}{\sigma}\exp\left(-n_jc^2/4\sigma^2\right) \exp(-c^2/4\sigma_n^2)  \nonumber\\
&\leq& \frac{A_1^*}{n_j^{1/2}}\exp(-c^2/4\sigma_n^2),
\nonumber
\end{eqnarray}
where $A_1^*=\sup_{\sigma,n_j}\left\{\frac{n_j^{1/2}}{\sigma}\exp\left(-\frac{n_jc^2}{4\sigma^2}\right)\right\}$.
It is easy to check that $A_1^*$ is free of $n$. Thus for each $z\in V_j$, 
\begin{eqnarray}
&&\int_{\sigma\leq \sigma_n}\int_{W_l}L(\Theta_{M_n}, z, \bY_n)dG_n(\sigma)dH(\Theta_{M_n}) \nonumber\\
&\leq& \frac{A_1^*}{n_j^{1/2}}\exp(-c^2/4\sigma_n^2) \nonumber\\
&&\ \ \times\int_{0}^{\sigma_n}\int_{\Theta_{-jM_n}}\left[\frac{1}{\sigma^{(n-1)}}\prod_{l=1}^{M_n}e^{-\frac{1}{2}\sum_{t:z_t=l}\left(\frac{Y_{t}-\bar Y_l}{\sigma}
\right)^2} \right. \nonumber\\
&&\hspace{30mm}\times\left.\prod_{l\neq j}e^{-\frac{n_l}{2}\left(\frac{\bar Y_l-\theta_l}{\sigma}\right)^2}
dH_{-j}(\Theta_{-jM_n})dG_n(\sigma)\right]
\nonumber\\
&\leq& A_1^*\exp(-c^2/4\sigma_n^2)\times\zeta_n(j,z).\nonumber
\end{eqnarray}

As a result, using (\ref{eq:less_sb}), we see that
\begin{eqnarray}
&&P(Z\in R_1^*, \Theta_{M_n}\in E^c, \sigma\leq \sigma_n\vert \bY_n) \nonumber\\
&\leq&
\dfrac{\sum_{j=1}^{(M_n-1)}\sum_{l=1}^{j}\sum_{z\in V_j}\int_{\sigma\leq \sigma_n}\int_{W_l}L(\Theta_{M_n}, z, \bY_n)dG_n(\sigma)
dH(\Theta_{M_n})}{\sum_{j=1}^{(M_n-1)}\sum_{z\in V_j}\int_{\sigma<\mathcal K} \int_{\Theta_{M_n}} L(\Theta_{M_n},z,\bY_n) dG_n(\sigma)dH(\Theta_{M_n})} \nonumber\\
&\leq& \dfrac{A_1^*\exp(-c^2/4\sigma_n^2)\times\sum_{j=1}^{(M_n-1)}\sum_{l=1}^{j}\sum_{z\in V_j}\zeta_n(j,z)}
{\frac{\alpha_n}{\alpha_n+M_n}e^{-1/2}\delta\times\sum_{j=1}^{(M_n-1)}\sum_{z\in V_j}\zeta_n(j,z)}\nonumber\\
&\leq& \dfrac{A_1^*\exp(-c^2/4\sigma_n^2)\times\sum_{j=1}^{(M_n-1)}\sum_{l=1}^{(M_n-1)}\sum_{z\in V_j}\zeta_n(j,z)}
{\frac{\alpha_n}{\alpha_n+M_n}e^{-1/2}\delta\times\sum_{j=1}^{(M_n-1)}\sum_{z\in V_j}\zeta_n(j,z)}\nonumber\\
&=& \frac{(M_n-1)A_1^*\exp(-c^2/4\sigma_n^2)}{\frac{\alpha_n}{\alpha_n+M_n}e^{-1/2}\delta}, 
\label{eq:sb_lemma81}
\end{eqnarray}
proving the lemma.

\end{proof}

\begin{lemma}
\label{lemma:sb_lemma9}
Let 
\begin{equation}
C_n\gtrsim \dfrac{n\left[\log\left(\frac{1}{\sigma_n}\right)+O\left(\frac{1}{n}
\log\left(\frac{1-\epsilon_n}{\epsilon_n}\right)\right)\right]}{\left(\frac{1}{\sigma_n^2}\right)}, 
\label{eq:C_n_sb_lemma9}
\end{equation}
where ``$\gtrsim$" stands for ``$\geq$" as $n\rightarrow\infty$.
\\[2mm]
Then,
\begin{equation}
P(Z\in R_1^*, \Theta_{M_{n}}\in E, \sigma\leq \sigma_n\vert \bY_n) = 
O\left(\left(1-\frac{1}{M_{n}}\right)^n \left(\frac{\alpha_n+M_{n}}{\alpha_n}\right)^{M_n}\right).
\label{eq:lemma9}
\end{equation}
\end{lemma}

\begin{proof}

Note that,
\begin{eqnarray}
&&P(Z\in R_1^*, \Theta_{M_n}\in E, \sigma\leq \sigma_n\vert \bY_n) \nonumber\\
&=&\dfrac{\sum_{z\in R_1^*}\int_{\sigma\leq \sigma_n}\int_{\Theta_{M_n}\in E}\int L(\Theta_{M_n}, z, \bY_n)dG_n(\sigma)dH(\Theta_{M_n})}{
\sum_{z}\int_{\sigma}\int_{\Theta_{M_n}} L(\Theta_{M_n}, z, \bY_n) dG_n(\sigma)dH(\Theta_{M_n})} \nonumber\\
&=& \frac{N}{D}. \label{eq:sb_lemma9}
\end{eqnarray}


Since
\begin{eqnarray}
L(\Theta_{M_n}, z, \bY_n)&=&\frac{1}{\sigma^n}e^{-\frac{\sum_{j=1}^{M_n} \sum_{t:z_t=j}(Y_{t}-\bar Y_j)^2}{2\sigma^2}}
\times e^{-\frac{\sum_{j=1}^{M_n}n_j(\bar Y_j-\theta_j)^2}{2\sigma^2}}
\nonumber\\
&& \ \ \leq \frac{1}{\sigma^n}e^{-\frac{\sum_{j=1}^{M_n} \sum_{t:z_t=j}(Y_{t}-\bar Y_j)^2}{2\sigma^2}},\nonumber 
\end{eqnarray}
it follows that

\begin{eqnarray}
&&\int_{\Theta_{M_n}\in E}\int_{0}^{\sigma_n}L(\Theta_{M_n}, z, \bY_n)
dH(\Theta_{M_n})dG_n(\sigma) \nonumber\\
&& \ \ \leq \int_{\theta_{1}\in [-a-c, a+c]}\int_{\Theta_{-1M_n}}\int_{0}^{\sigma_n}
\frac{1}{\sigma^n}e^{\frac{-\sum_{j=1}^{M_n}\sum_{t:z_t=j}(Y_{t}-\bar Y_j)^2}{2\sigma^2}}
dH(\Theta_{M_n})dG_n(\sigma) \nonumber\\
&&\ \ \leq A_1^* \int_{\theta_{1}\in [-a-c, a+c]}\int_{\Theta_{-1M_n}}\int_{0}^{\sigma_n}dH(\Theta_{M_n})dG_n(\sigma) \nonumber\\
&&\ \ = A_1^* G_0([-a-c, a+c])O(1-\epsilon_n),\label{eq:N_equation}
\end{eqnarray}
where 
$A_1^*=\sup_{\{\sigma\in (0,\sigma_n)\}}\left\{\frac{1}{\sigma^n}e^{-\frac{C_n^{(1)}}{2\sigma^2}}\right\}
= \left(\frac{1}{\sigma_n}\right)^ne^{-\frac{C_n^{(1)}}{2\sigma_n^2}}$, \\
$C_n^{(1)}=\inf_{\{z\in R_1^*\}}\left(\sum_{j=1}^{M_n}\sum_{t:z_t=j}(Y_{t}-\bar Y_j)^2\right)$.
\\[2mm]
Clearly, for each $z\in R_1^{*}$ each term in $N$ is bounded above by 
\begin{eqnarray}
N^*=\left(\frac{1}{\sigma_n}\right)^ne^{-
\frac{C_n^{(1)}}{2\sigma_n^2}}\times G_0([-a-c, a+c])\times O(1-\epsilon_n).
\nonumber 
\end{eqnarray}
Hence 
\begin{equation}
N\leq \left(M_n-1\right)^n N^{*}.
\label{eq:N_SB1}
\end{equation}


Let 
$C_n^{(2)}=\sup_{z\in R_1^*}\sum_{j=1}^{M_n}\sum_{t:z_t=j}(Y_{t}-\bar Y_j)^2$. 
Now, assuming that $k_n$ is a sequence diverging to $\infty$ and denoting 
$R^{*}$=\{$\theta_1\in [\bar Y_1-k_n, \bar Y_1+k_n]\cap\mathbb S,
\ldots,\theta_d\in [\bar Y_d-k_n, \bar Y_d+k_n]\cap\mathbb S$, rest $\theta_{l}$'s are in $\mathbb S$, 
$nk_n\leq\sigma\leq 2nk_n\}$, where $1\leq d\leq M_n$,

\begin{eqnarray}
D&=&\int_{\Theta_{M_n}}\int_{0}^{\infty}\frac{1}{\sigma^n}
\left[e^{-\dfrac{\sum_{j=1}^{M_n}\sum_{t:z_t=j}(Y_{t}-\bar Y_j)^2}{2\sigma^2}} \right. \nonumber\\
&&\left.\hspace{25mm}\times \ e^{-\dfrac{\sum_{j=1}^{M_n}n_j(\bar Y_j-\theta_j)^2}{2\sigma^2}}
dH(\Theta_{M_n})dG_n(\sigma) \right] \nonumber\\
&\geq& \int_{R^{*}}\left[\frac{1}{\sigma^n}
e^{-\dfrac{\sum_{j=1}^{M_n}\sum_{t:z_t=j}(Y_{t}-\bar Y_j)^2}{2\sigma^2}} \right. \nonumber\\
&&\left.\hspace{10mm}\times \ e^{-\dfrac{\sum_{j=1}^{M_n}n_j(\bar Y_j-\theta_j)^2}{2\sigma^2}}
dH(\Theta_{M_n})dG_n(\sigma)\right] \nonumber\\
&\geq& \inf_{\{z, \sigma\in [nk_n,2nk_n]\}}\left(\frac{1}{\sigma^n}
e^{-\frac{\sum_{j=1}^{M_n}\sum_{t:z_t=j}(Y_{t}-\bar Y_j)^2}{2\sigma^2}}\right) \nonumber\\
&&\ \ \times\int_{R^{*}} e^{-\dfrac{\sum_{j=1}^{d}n_j(\bar Y_j-\theta_j)^2}{2\sigma^2}}\times dH(\Theta_{M_n})dG_n(\sigma) \nonumber\\
&\geq & \left(\frac{1}{2nk_n}\right)^n e^{-\frac{C_n^{(2)}}{8n^2k_n^2}}\times 
\int_{R^{*}}e^{-\frac{nk_n^2}{2\sigma^2}}\times 
dH(\Theta_{M_n})dG_n(\sigma) \nonumber\\
&\geq& \left(\frac{1}{2nk_n}\right)^n e^{-\frac{C_n^{(2)}}{8n^2k_n^2}}\times e^{-\frac{1}{2n}}\times 
\int_{R^{*}}dH(\Theta_{M_n})dG_n(\sigma), \nonumber
\end{eqnarray}
because $\sigma\geq nk_n\Rightarrow e^{-\frac{nk_n^2}{2\sigma^2}}\geq e^{-\frac{1}{2n}}$. Thus,
\begin{eqnarray}
D &\geq& M_n^n\left(\frac{1}{2nk_n}\right)^n e^{-\frac{C_n^{(2)}}{8n^2k_n^2}}\times 
\prod_{j=1}^{d}G_0([\bar Y_j-k_n, \bar Y_j+k_n]\cap\mathbb S)\times
e^{-\frac{1}{2n}}\nonumber\\
&&\times\left(\frac{\alpha_n}{\alpha_n+M_n}\right)^{M_n} \times O(\epsilon_n), 
\label{eq:D_equation}
\end{eqnarray}
assuming that $\int_{nk_n}^{2nk_n}dG_n(\sigma)=O(\epsilon_n)$ as well.\\

Inequalities (\ref{eq:N_SB1}) and (\ref{eq:D_equation}) imply that 
$\frac{N}{D}$ is of the order
\begin{eqnarray}
&&\dfrac{\left(M_n-1\right)^n\left(\frac{1}{\sigma_n}\right)^ne^{-\frac{C_n^{(1)}}{2\sigma_n^2}}\times G_0([-a-c, a+c]) \times O(1-\epsilon_n)}{
M_n^n \left(\frac{1}{2nk_n}\right)^n
e^{-\frac{C_n^{(2)}}{8n^2k_n^2}} e^{-\frac{1}{2n}} \left(\frac{\alpha_n}{\alpha_n+M_n}\right)^{M_n}
\prod_{j=1}^{d}G_0([\bar Y_j-k_n, \bar Y_j+k_n]\cap\mathbb S)\times O(\epsilon_n)}, \nonumber\\
\label{eq:lemma9_ratio}
\end{eqnarray}
where $\prod_{j=1}^{d}G_0([\bar Y_j-k_n, \bar Y_j+k_n]\cap\mathbb S)\rightarrow 1$ as $n\rightarrow\infty$.

As shown in Section 5 of MB, $\sum_{j=1}^{M_n}\sum_{t:z_t=j}(Y_{t}-\bar Y_j)^2/n$ converges to $\sigma_{T}^2$ almost
surely. That is, the quantity $\sum_{j=1}^{M_n}\sum_{t:z_t=j}(Y_{t}-\bar Y_j)^2$ is asymptotically independent
of $z$.
Hence, as $n\rightarrow\infty$, it holds, almost surely, that 
$C_n^{(1)}\sim C_n^{(2)}\sim C_n$. We now investigate the appropriate order of $C_n$ such that
\begin{equation}
\left(\frac{1}{\sigma_n}\right)^n e^{-\frac{C_n}{2\sigma_n^2}} G_0([-a-c, a+c]) O(1-\epsilon_n)\lesssim
\left(\frac{1}{2nk_n}\right)^n e^{-\frac{C_n}{8n^2k_n^2}}
O(\epsilon_n)
\label{eq:C_n_1}
\end{equation}
holds for large $n$.

Taking logarithm of both sides of (\ref{eq:C_n_1}) yields
\begin{eqnarray}
&&n\log\left(\frac{1}{\sigma_n}\right)-\frac{C_n}{2\sigma_n^2}+O\left(\log\left(\frac{1-\epsilon_n}{\epsilon_n}\right)\right)+
\log\left(H_0\right)  \nonumber\\
&\lesssim& n\log\left(\frac{1}{2nk_n}\right)-\frac{C_n}{8n^2k_n^2} \nonumber\\
&\Leftrightarrow& C_n\left(\frac{1}{2\sigma_n^2}-\frac{1}{8n^2 k_n^2}\right)\gtrsim n\log\left(\frac{1}{\sigma_n}\right)-
n\log\left(\frac{1}{2nk_n}\right)\nonumber\\
&&\hspace{5cm} +O\left(\log\left(\frac{1-\epsilon_n}{\epsilon_n}\right)\right)+\log\left(H_0\right) \nonumber\\
&\Leftrightarrow& C_n\gtrsim \dfrac{n\left[\log\left(\frac{1}{\sigma_n}\right)+\log(2nk_n)\right]+O\left(\log\left(\frac{1-\epsilon_n}{\epsilon_n}\right)\right)}{
\left(\frac{1}{2\sigma_n^2}-\frac{1}{8n^2 k_n^2}\right)} \nonumber\\
&&\hspace{5cm}+\dfrac{\log\left(H_0\right)}{\left(\frac{1}{2\sigma_n^2}-\frac{1}{8n^2 k_n^2}\right)} \nonumber\\
&\Leftrightarrow& C_n\gtrsim \dfrac{n\left[\log\left(\frac{1}{\sigma_n}\right)+O\left(\frac{1}{n}
\log\left(\frac{1-\epsilon_n}{\epsilon_n}\right)\right)\right]}{\left(\frac{1}{\sigma_n^2}\right)}.
\label{eq:C_n_lemma9}
\end{eqnarray}


Thus, (\ref{eq:C_n_1}) holds if $C_n$ is of the form (\ref{eq:C_n_sb_lemma9}). 
%
%
Hence, 
(\ref{eq:lemma9}) 
holds under the additional assumption 
(\ref{eq:C_n_sb_lemma9}).

\end{proof} 

\begin{lemma}
\label{lemma:sb_lemma10} 
$P(Z\in (R_1^*)^c, \theta_i\in [-a-c,a+c]^c\cap\mathbb S, \sigma\leq \sigma_n\vert \bY_n)=O\left(B_{M_{n}}\right)$, 
where $B_{M_{n}}$ is defined in Lemma \ref{lemma:sb_b_n}.\\
\end{lemma}

\begin{proof}
When $z\in (R_1^*)^c$, then $\theta_i$ is present in the likelihood and hence in $\Theta_{z}$. 
Thus the same calculations associated with Lemma \ref{lemma:sb_b_n}, now only with $\theta_i$, guarantees the result.
\end{proof}

\subsubsection{{\bf Proof of Theorem 5.2}}
\label{subsubsec:proof_theorem_5.2}

\begin{proof}


\begin{eqnarray}
&&E\left(\frac{\varphi(\theta_i,\sigma+\hat k_n)}{(\sigma+\hat k_n)}\phi\left(\frac{y-\theta_i}{\sigma+\hat k_n}\right)\bigg\vert \bY_n\right) 
\nonumber\\
&=&\dfrac{\sum_{z}\int_{\Theta_{M_n}}\int_{\sigma}\frac{\varphi(\theta_i,\sigma+\hat k_n)}{(\sigma+\hat k_n)}
\phi\left(\frac{y-\theta_i}{\sigma+\hat k_n}\right) 
L(\Theta_{M_n},z,\bY_n)dH(\Theta_{M_n})dG_n(\sigma)}{\sum_{z}\int_{\Theta_{M_n}}
\int_{\sigma}L(\Theta_{M_n},z,\bY_n)dH(\Theta_{M_n})dG_n(\sigma)} \nonumber\\
&=& \frac{N}{D}, 
\label{eq:lemma5_E1}
\end{eqnarray}
where $L(\Theta_{M_n},z,\bY_n) = \prod_{j=1}^{M_n}e^{-\frac{\sum_{t:z_t=j}(Y_{t}-\bar Y_j)^2}{2\sigma^2}}e^{-\frac{n_j(\bar Y_j-\theta_j)^2}{2\sigma^2}}$ is the likelihood of
$\Theta_{M_n}$, $n_j=$ $\#\{i: z_i = j\}$, $\bar Y_j = \frac{1}{n_j}\sum_{t:z_t=j}Y_{t}$.\\

To simplify the calculations we can split the set of all of $z$'s into $R_1^*$ and $(R_1^*)^c$; the cardinality of the set of
all $z$-vectors satisfying 
these conditions are $(M_n-1)^n$ and  $M_n^n-(M_n-1)^n$, 
respectively. Denote $I_1=\{\Theta_{M_n}\in E^c, \sigma\leq \sigma_n\}$, $I_2=\{\Theta_{M_n}\in E, \sigma\leq \sigma_n\}$, 
$I_3=\{\theta_i\in [-a-c,a+c]^c\cap\mathbb S, \sigma\leq \sigma_n\}$, 
$I_4=\{\theta_i\in [-a-c,a+c], \sigma\leq \sigma_n\}$, $I_5=\{\sigma> \sigma_n\}$,
where $E$ has been defined in Section 7 of MB. 
Note that
\begin{eqnarray}
\{R_1^{*}\cap I_1\}\cup \{R_1^{*}\cap I_2\} &=& R_1^{*}\cap \{\sigma\leq \sigma_n\}; \nonumber\\
\{(R_1^{*})^{c}\cap I_3\}\cup \{(R_1^{*})^{c}\cap I_4\} &=& (R_1^{*})^{c}\cap \{\sigma\leq \sigma_n\}; \nonumber\\
\left(\{R_1^{*}\cap I_1\}\cup \{R_1^{*}\cap I_2\}\right)\cup 
\left(\{(R_1^{*})^{c}\cap I_3\}\cup \{(R_1^{*})^{c}\cap I_4\}\right) &=& \{\sigma\leq \sigma_n\}. \nonumber
\end{eqnarray}

We write
\begin{eqnarray}
E\left(\frac{\varphi(\theta_i,\sigma+\hat k_n)}{(\sigma+\hat k_n)}
\phi\left(\frac{y-\theta_i}{\sigma+\hat k_n}\right)\bigg\vert \bY_n\right) 
= S_1+S_2+S_3+S_4+S_5, \label{eq:expec_sum}
\end{eqnarray}
where\\
$
S_1=\frac{1}{D}\sum_{R_1^*}\int_{I_1}\frac{\varphi(\theta_i,\sigma+\hat k_n)}{(\sigma+\hat k_n)}\phi\left(\frac{y-\theta_i}{\sigma+\hat k_n}\right)L(\Theta_{M_n},z, \bY_n)dH(\Theta_{M_n})dG_n(\sigma), \\
S_2 = \frac{1}{D}\sum_{R_1^*}\int_{I_2}\frac{\varphi(\theta_i,\sigma+\hat k_n)}{(\sigma+\hat k_n)}\phi\left(\frac{y-\theta_i}{\sigma+\hat k_n}\right)L(\Theta_{M_n},z, \bY_n)dH(\Theta_{M_n})dG_n(\sigma),\\
S_3 = \frac{1}{D}\sum_{(R_1^*)^c}\int_{I_3}\frac{\varphi(\theta_i,\sigma+\hat k_n)}{(\sigma+\hat k_n)}\phi\left(\frac{y-\theta_i}{\sigma+\hat k_n}\right)L(\Theta_{M_n},z, \bY_n)dH(\Theta_{M_n})dG_n(\sigma),\\
S_4 = \frac{1}{D}\sum_{(R_1^*)^c}\int_{I_4}\frac{\varphi(\theta_i,\sigma+\hat k_n)}{(\sigma+\hat k_n)}\phi\left(\frac{y-\theta_i}{\sigma+\hat k_n}\right)L(\Theta_{M_n},z, \bY_n)dH(\Theta_{M_n})dG_n(\sigma),\\
S_5 = \frac{1}{D}\sum_{z}\int_{I_5}\frac{\varphi(\theta_i,\sigma+\hat k_n)}{(\sigma+\hat k_n)}\phi\left(\frac{y-\theta_i}{\sigma+\hat k_n}\right)L(\Theta_{M_n},z, \bY_n)dH(\Theta_{M_n})dG_n(\sigma). \\
$
\\

Also let\\
$
P_1 = P(Z\in R_1^*, \Theta_{M_n}\in E^c, \sigma\leq \sigma_n\vert \bY_n),\\
P_2 = P(Z\in R_1^*, \Theta_{M_n}\in E, \sigma\leq \sigma_n\vert \bY_n),\\
P_3 = P(Z\in (R_1^*)^c, \theta_i\in [-a-c,a+c]^c\cap\mathbb S, \sigma\leq \sigma_n\vert \bY_n),\\
P_4=  P(Z\in (R_1^*)^c, \theta_i\in [-a-c,a+c], \sigma\leq \sigma_n\vert \bY_n),\\
P_5 = P(\sigma\geq \sigma_n\vert \bY_n).
$
\\

Recalling that $H_1$ is given by (\ref{eq:H_1}), the upper bounds of the terms $S_1,\ldots,S_5$ are given as follows.
\begin{equation}
S_1\leq H_1P(Z\in R_1^*, \Theta_{M_n}\in E^c, \sigma\leq \sigma_n\vert \bY_n)\leq H_1(M_n-1)B_{M_n}, \label{eq:sum_1_bound}
\end{equation}
from Lemma \ref{lemma:sb_b_n}.

\begin{equation}
S_2\leq H_1P(Z\in R_1^*, \Theta_{M_n}\in E, \sigma\leq \sigma_n\vert \bY_n)\leq H_1\left(1-\frac{1}{M_n}\right)^n 
\left(\frac{\alpha_n+M_n}{\alpha_n}\right)^{M_n}, \label{eq:sum_2_bound}
\end{equation}
from Lemma \ref{lemma:sb_lemma9}.

\begin{equation}
S_3\leq H_1 P(Z\in (R_1^*)^c, \theta_i\in [-a-c, a+c]^c\cap\mathbb S, \sigma\leq \sigma_n\mid \bY_n)\leq H_1B_{M_n}, \label{eq:sum_3_bound}
\end{equation}
from Lemma \ref{lemma:sb_lemma10}.

\begin{equation}
S_5 \leq H_1 P(Z\in (R_1^*)^c, \sigma> \sigma_n\mid \bY_n)\leq H_1\epsilon_{M_n}^*, \label{eq:sum_5_bound}
\end{equation}
from Lemma \ref{lemma:sb_epsilon}.

\begin{eqnarray}
S_4 &=& \frac{1}{D}\int_{I_4}\frac{\varphi(\theta_i,\sigma+\hat k_n)}{(\sigma+\hat k_n)}
\phi\left(\frac{y-\theta_i}{\sigma+\hat k_n}\right)\sum_{z\in (R_1^*)^c}L(\Theta_{M_n},z, \bY_n)
dH(\Theta_{M_n})dG_n(\sigma) \nonumber\\
&=& \frac{1}{D}\frac{\varphi(\theta_n^*(y),\sigma_n^*(y)+\hat k_n)}{(\sigma_n^*(y)+\hat k_n)}\phi\left(\frac{y-\theta_n^*(y)}{\sigma_n^*(y)+\hat k_n}\right) \nonumber\\
&&\ \ \times \sum_{z\in (R_1^*)^c}\int\int_{\theta_i\in [-a-c, a+c]}
\int_{0}^{\sigma_n}L(\Theta_{M_n},z,\bY_n)dH(\Theta_{M_n})dG_n(\sigma)  \label{eq:step_1}\\ 
&=& \frac{\varphi(\theta_n^*(y),\sigma_n^*(y)+\hat k_n)}{(\sigma_n^*(y)+\hat k_n)}\phi\left(\frac{y-\theta_n^*(y)}{\sigma_n^*(y)+\hat k_n}\right) \nonumber\\
&&\ \ \times P(Z\in (R_1^*)^c, \theta_i\in [-a-c, a+c],\sigma\leq \sigma_n\mid \bY_n) \nonumber\\
&=& \frac{\varphi(\theta_n^*(y),\sigma_n^*(y)+\hat k_n)}{(\sigma_n^*(y)+\hat k_n)}
\phi\left(\frac{y-\theta_n^*(y)}{\sigma_n^*(y)+\hat k_n}\right)(1-P_1-P_2-P_3-P_5),  \nonumber\\
\label{eq:sum_4}
\end{eqnarray}
where (\ref{eq:step_1}) is obtained by using $GMVT$, $\theta_n^*(y)\in (-a-c, a+c)$, and
$\sigma_n^*(y)\in (0,\sigma_n)$. \\
The integration and summation 
can be interchanged since the number of terms under summation is finite for a particular value of $n$.\\

Note that equations (\ref{eq:sum_1_bound})--(\ref{eq:sum_5_bound}), and $P_1, P_2, P_3, P_5$ converge to zero under proper
conditions. In particular, $P_1$ converges to 0 if $\sigma_n$ is chosen to be sufficiently small. 
Also $b_n$ can also chosen to be very small such that it satisfies $b^2_n<\sigma^2_n$ for all $n$. 

%

These choices 
get $P_3$ to converge to 0 and
$P_5$ converges to zero if $\frac{\epsilon_n}{1-\epsilon_n}\prec 
\left(\exp\left(\frac{n(2a+c)^2}{2(b_n)^2}\right)\frac{(\alpha_n+M_n)^{M_n}}{(\alpha_n)^{M_n} H_0^{M_n}}\right)^{-1}$.\\

$P_2$ converges to zero if $M_n\prec \sqrt{n}$, however, the form of the bound  (\ref{eq:lemma9}) 
given by Lemma \ref{lemma:sb_lemma9} is 
valid if
(\ref{eq:C_n_sb_lemma9}) holds.

Now note that $S_1+S_2+S_3+S_5 \leq H_1\left[P_1+P_2+P_3+P_5\right]$. Since under the specified assumptions $P_1, P_2, P_3, P_5$
converge to 0, as $n\rightarrow \infty$, the sum also goes to 0, as $n\rightarrow \infty$. Thus in $S_4$, the term $(1-P_1-P_2-P_3-P_5)\rightarrow 1$ as
$n\rightarrow \infty$. Uniform convergence of 
$\frac{\varphi(\theta_n^*(y),\sigma_n^*(y)+\hat k_n)}{(\sigma_n^*(y)+\hat k_n)}
\phi\left(\frac{y-\theta_n^*(y)}{\sigma_n^*(y)+\hat k_n}\right)$  to  
$\frac{\varphi(\theta^*(y),k)}{k}\phi\left(\frac{y-\theta^*(y)}{k}\right)$
can be proved in exactly the same way using Taylor's series expansion as done in the case of the EW model. 
In particular, it holds that 
\begin{equation}
\sup_{|y|\leq a}\bigg\vert\frac{\varphi(\theta_n^*(y),\sigma_n^*(y)+\hat k_n)}{(\sigma_n^*(y)+\hat k_n)}
\phi\left(\frac{y-\theta_n^*(y)}{\sigma_n^*(y)+\hat k_n}\right)-\frac{\varphi(\theta^*(y),k)}{k}
\phi\left(\frac{y-\theta^*(y)}{k}\right)
\bigg\vert
= O(\sigma_n+|\hat k_n-k|). \label{eq:expec_order1}
\end{equation}

We also conclude that 
\begin{eqnarray}
&&E\left(\frac{\varphi(\theta_i,\sigma+\hat k_n)}{(\sigma+\hat k_n)}
\phi\left(\frac{y-\theta_i}{\sigma+\hat k_n}\right)\bigg\vert \bY_n\right)-\frac{\varphi(\theta^*(y),k)}{k}
\phi\left(\frac{y-\theta^*(y)}{k}\right) \nonumber\\
&=&S_1+S_2+S_3+S_5\nonumber\\
&&\ \ +\frac{\varphi(\theta_n^*(y),\sigma_n^*(y)+\hat k_n)}{(\sigma_n^*(y)+\hat k_n)}
\phi\left(\frac{y-\theta_n^*(y)}{\sigma_n^*(y)+\hat k_n}\right)\left(1-P_1-P_2-P_3-P_5\right)-
\frac{\varphi(\theta^*(y),k)}{k}\phi\left(\frac{y-\theta^*(y)}{k}\right) \nonumber\\
&=&S_1+S_2+S_3+S_5-\frac{\varphi(\theta_n^*(y),\sigma_n^*(y)+\hat k_n)}{(\sigma_n^*(y)+\hat k_n)}
\phi\left(\frac{y-\theta_n^*(y)}{\sigma_n^*(y)+\hat k_n}\right)\left(P_1+P_2+P_3+P_5\right)\nonumber\\
&&\ \ +\frac{\varphi(\theta_n^*(y),\sigma_n^*(y)+\hat k_n)}{(\sigma_n^*(y)+\hat k_n)}
\phi\left(\frac{y-\theta_n^*(y)}{\sigma_n^*(y)+\hat k_n}\right)-
\frac{\varphi(\theta^*(y),k)}{k}\phi\left(\frac{y-\theta^*(y)}{k}\right) \nonumber\\
&=&O\left((M_n-1)B_{M_n}\right)+O\left(\left(1-\frac{1}{M_n}\right)^{n} 
\left(\frac{\alpha_n+M_n}{\alpha_n}\right)^{M_n}\right) \nonumber\\
&&\ \ +O\left(B_{M_n}\right)+O\left(\epsilon_{M_n}^*\right)+O(\sigma_n+|\hat k_n-k|) \nonumber\\
&=&O\left(M_n B_{M_n}+\left(1-\frac{1}{M_n}\right)^n \left(\frac{\alpha_n+M_n}{\alpha_n}\right)^{M_n}
+\epsilon_{M_n}^*+\sigma_n+|\hat k_n-k|\right). \nonumber\\
\label{eq:sb_expect}
\end{eqnarray}
It follows that
\begin{eqnarray}
&&\sup_{|y|\leq a}\bigg\vert E\left(\hat f_{SB}(y\mid \Theta_{M_n},\sigma)\mid\bY_n\right)
-\frac{\varphi(\theta^*(y),k)}{k}\phi\left(\frac{y-\theta^*(y)}{k}\right)\bigg\vert\nonumber\\
&&\ \ \leq\sup_{|y|\leq a}\bigg\vert E\left(\frac{\varphi(\theta_i,\hat k_n)}{(\sigma+\hat k_n)}
\phi\left(\frac{y-\theta_i}{\sigma+\hat k_n}\right)\bigg\vert \bY_n\right)-\frac{\varphi(\theta^*(y),k)}{k}
\phi\left(\frac{y-\theta^*(y)}{k}\right)\bigg\vert \nonumber\\
&&\ \ =O\left(M_n B_{M_n}+\left(1-\frac{1}{M_n}\right)^n \left(\frac{\alpha_n+M_n}{\alpha_n}\right)^{M_n}+\epsilon_{M_n}^*
+\sigma_n+|\hat k_n-k|\right), \nonumber\\
\end{eqnarray}
proving the theorem.
\end{proof}

\subsection{{\bf Proof of Theorem 5.3}}
\begin{proof}
Recall that
$J_1=\frac{1}{D_1}\int_{I_4}\frac{\varphi(\theta_i,\sigma+\hat k_n)}{(\sigma+\hat k_n)}
\phi\left(\frac{y-\theta_i}{\sigma+\hat k_n}\right)L(\Theta_n,\bY_n)dH(\Theta_n)dG_n(\sigma),$ and\\
$S_4 = \frac{1}{D_2}\int_{I_4}\frac{\varphi(\theta_i,\sigma+\hat k_n)}{(\sigma+\hat k_n)}
\phi\left(\frac{y-\theta_i}{\sigma+\hat k_n}\right)\sum_{z\in (R_1^*)^c}L(\Theta_{M_n},z, \bY_n)
dH(\Theta_{M_n})dG_n(\sigma)$, where $D_1$ and $D_2$ denote the normalizing constants of the posteriors corresponding to 
the EW and the SB models, respectively.
\\
Let $L=\max(M_n,n)$. Then,
\\
$\abs{J_1-S_4}$
\begin{eqnarray}
&=&\left\vert\int_{I_4}\left[\frac{1}{D_1}\frac{\varphi(\theta_i,\sigma+\hat k_n)}{(\sigma+\hat k_n)}
\phi\left(\frac{y-\theta_i}{\sigma+\hat k_n}\right)
\times L(\Theta_n,\bY_n)\right.\right.\nonumber\\
&& \ \ \left.\left.-\frac{1}{D_2}\frac{\varphi(\theta_i,\sigma+\hat k_n)}{(\sigma+\hat k_n)}
\phi\left(\frac{y-\theta_i}{\sigma+\hat k_n}\right)
\times\sum_{z\in (R_1^*)^c}L(\Theta_{M_n},z,\bY_n)\right]dH(\Theta_{L})dG_n(\sigma)\right\vert\nonumber\\
&=& \frac{\varphi(\theta_{1,n},\sigma_{1,n}+\hat k_n)}{(\sigma_{1,n}(y)+\hat k_n)}
\phi\left(\frac{y-\theta_{1,n}(y)}{\sigma_{1,n}(y)+\hat k_n}\right)
\times \left\vert P\left(I_4\vert\bY_n\right)-P\left((R^*_1)^c,I_4\vert\bY_n\right)\right\vert\nonumber\\
\label{eq:same_dist1}\\
&&\ \ \rightarrow 0.
\label{eq:same_dist2}
\end{eqnarray}
Step (\ref{eq:same_dist1}) follows using $GMVT$, where the notation have the usual meanings, 
and step (\ref{eq:same_dist2}) follows because
the first factor remains bounded and the second factor goes to zero 
(since $P\left(I_4\vert\bY_n\right)\rightarrow 1$, and 
$P\left((R^*_1)^c,I_4\vert\bY_n\right)\rightarrow 1$).
In other words, $J_1$ and $S_4$ converge to the same model. Hence, we must have $\mu^*(y)=\theta^*(y)$.
\end{proof}

\section{{\bf Proofs of results associated with Section 6 of MB}} 

\subsection{{\bf Proofs of results on the EW model}}

\subsubsection{{\bf Proof of Lemma 6.2}}

\begin{proof}
Note that 
\begin{eqnarray}
&&Var\left(\frac{\varphi(\theta_i,\sigma+\hat k_n)}{(\sigma+\hat k_n)}
\phi\left(\frac{y-\theta_i}{\sigma+\hat k_n}\right)\bigg\vert \bY_n\right)  \nonumber\\
&&= E\left(\frac{\varphi(\theta_i,\sigma+\hat k_n)}{(\sigma+\hat k_n)}
\phi\left(\frac{y-\theta_i}{\sigma+\hat k_n}\right)-E\left(\frac{\varphi(\theta_i,\sigma+\hat k_n)}{(\sigma+\hat k_n)}
\phi\left(\frac{y-\theta_i}{\sigma+\hat k_n}\right)\bigg\vert \bY_n\right)\bigg\vert \bY_n\right)^2
\nonumber\\
&&=J_1^{'}+J_2^{'}+J_3^{'}, \label{eq:var_ew_split}
\end{eqnarray}
where\\
$
J_1^{'}=\frac{1}{D}\int_{R_1}\xi_{in}^2\times L(\Theta_n,\bY_n)dH(\Theta_n)dG_n(\sigma),$\\
$J_2^{'}=\frac{1}{D}\int_{R_2}\xi_{in}^2\times L(\Theta_n,\bY_n)dH(\Theta_n)dG_n(\sigma),$\\
$J_3^{'}=\frac{1}{D}\int_{R_3}\xi_{in}^2\times L(\Theta_n,\bY_n)dH(\Theta_n)dG_n(\sigma),$\\
$\xi_{in}=\frac{\varphi(\theta_i,\sigma+\hat k_n)}{(\sigma+\hat k_n)}
\phi\left(\frac{y-\theta_i}{\sigma+\hat k_n}\right)
-E\left(\frac{\varphi(\theta_i,\sigma+\hat k_n)}{(\sigma+\hat k_n)}
\phi\left(\frac{y-\theta_i}{\sigma+\hat k_n}\right)\bigg\vert \bY_n\right).$\\

Clearly,
\begin{equation}
J_2^{'}\leq 4H_{1}^2\times B_n,  \label{eq:var_part2}
\end{equation}
and 
\begin{equation}
J_3^{'}\leq 4H_{1}^2\times \epsilon_n^*,  \label{eq:var_part3}
\end{equation}
where $H_1$ is given by (\ref{eq:H_1}).
As in the proof of Theorem 5.1 (see Section \ref{subsubsec:proof_theorem_5.1}), here also we set
\\
$R_1=\{\theta_i\in [-a-c, a+c], \sigma\leq \sigma_n\},$\\
$R_2=\{\theta_i\in [-a-c, a+c]^c\cap\mathbb S, \sigma\leq \sigma_n\},$\\
$R_3=\{\sigma> \sigma_n\}.$\\

Letting $P_1=P\left(R_1\big\vert \bY_n\right), P_2=P\left(R_2\big\vert \bY_n\right), P_3=P\left(R_3\big\vert \bY_n\right)$, we concentrate on the term
\begin{eqnarray}
&&J_1^{'}=\frac{1}{D}\int\int_{\theta_i\in [-a-c,a+c]}\int_{\sigma\leq \sigma_n} \xi_{in}^2\times L(\Theta_n,\bY_n)dH(\Theta_n)dG_n(\sigma)\nonumber\\
&&=\left[\frac{\varphi(m_n(y),\tau_n(y)+\hat k_n)}{(\tau_n(y)+\hat k_n)}
\phi\left(\frac{y-m_n(y)}{\tau_n(y)+\hat k_n}\right)-E\left(\frac{\varphi(\theta_i,\sigma+\hat k_n)}{(\sigma+\hat k_n)}
\phi\left(\frac{y-\theta_i}{\sigma+\hat k_n}\right)\bigg\vert \bY_n\right)\right] \nonumber\\
&&\ \ \times\frac{1}{D}\int\int_{\theta_i\in [-a-c,a+c]}\int_{\sigma\leq \sigma_n} 
\xi_{in}\times L(\Theta_n,\bY_n)dH(\Theta_n)dG_n(\sigma)\nonumber\\
\label{eq:ew_var1}
\end{eqnarray}
applying $GMVT$, where, for every $y$, $m_n(y)\in (-a-c,a+c)$, and $\tau_n(y)\in (0,\sigma_n)$.\\

Now we consider the following term:
\begin{eqnarray}
&&\frac{1}{D}\int\int_{\theta_i\in [-a-c,a+c]}\int_{\sigma\leq \sigma_n} \xi_{in}\times L(\Theta_n,\bY_n)dH(\Theta_n)dG_n(\sigma)\nonumber\\
&&\ \ =\frac{1}{D}\int\int_{\theta_i\in [-a-c,a+c]}\int_{\sigma\leq \sigma_n} 
\frac{\varphi(\theta_i,\sigma+\hat k_n)}{(\sigma+\hat k_n)}\phi\left(\frac{y-\theta_i}{\sigma+\hat k_n}\right)
L(\Theta_n,\bY_n)dH(\Theta_n)dG_n(\sigma) \nonumber\\
&&\ \ \ \ -E\left(\frac{\varphi(\theta_i,\sigma+\hat k_n)}{(\sigma+\hat k_n)}
\phi\left(\frac{y-\theta_i}{\sigma+\hat k_n}\right)\bigg\vert \bY_n\right)
P\left(\theta_i\in [-a-c,a+c], \sigma\leq \sigma_n\bigg\vert \bY_n\right) \nonumber\\
&&=J_{1}^{''}+J_{2}^{''}. \nonumber\\
\label{ew_var2}
\end{eqnarray}

For the part $J_{1}^{''}$, we note that 
\begin{eqnarray}
&&\frac{1}{D}\int\int_{\theta_i\in [-a-c,a+c]}\int_{\sigma\leq \sigma_n} 
\frac{\varphi(\theta_i,\sigma+\hat k_n)}{(\sigma+\hat k_n)}\phi\left(\frac{y-\theta_i}{\sigma+\hat k_n}\right)
L(\Theta_n,\bY_n)dH(\Theta_n)dG_n(\sigma) \nonumber\\  
&=& \frac{\varphi(\mu_n^*(y),\upsilon_n(y)+\hat k_n)}{(\upsilon_n(y)+\hat k_n)}
\phi\left(\frac{y-\mu_n^*(y)}{\upsilon_n(y)+\hat k_n}\right)(1-P_2-P_3).  \nonumber\\
\label{eq;ew_var3}
\end{eqnarray}

From (\ref{eq:lemma3_part1}) and following Theorem 5.1 of MB 
it follows that
\begin{eqnarray}
&&E\left(\frac{\varphi(\theta_i,\sigma+\hat k_n)}{(\sigma+\hat k_n)}
\phi\left(\frac{y-\theta_i}{\sigma+\hat k_n}\right)\bigg\vert \bY_n\right)  \nonumber\\
&=&  J_2+J_3+\frac{\varphi(\mu_n^*(y),\upsilon_n(y)+k)}{(\upsilon_n(y)+k)}
\phi\left(\frac{y-\mu_n^*(y)}{\upsilon_n(y)+k}\right)\left(1-P_2-P_3\right). 
\label{eq:ew_var4}
\end{eqnarray}

Thus,
\begin{eqnarray}
&&J_{1}^{''}+J_{2}^{''}  \nonumber\\
&=& \frac{\varphi(\mu_n^*(y),\upsilon_n(y)+\hat k_n)}{(\upsilon_n(y)+\hat k_n)}
\phi\left(\frac{y-\mu_n^*(y)}{\upsilon_n(y)+\hat k_n}\right)
\left[(1-P_2-P_3)-(1-P_2-P_3)^2\right] \nonumber\\
&&\ \ -(J_2+J_3)(1-P_2-P_3)\nonumber\\
&\leq& \frac{\varphi(\mu_n^*(y),\upsilon_n(y)+\hat k_n)}{(\upsilon_n(y)+\hat k_n)}
\phi\left(\frac{y-\mu_n^*(y)}{\upsilon_n(y)+\hat k_n}\right)(P_2+P_3)(1-P_2-P_3) \nonumber\\
&&\ \ + H_1(P_2+P_3)(1-P_2-P_3)\nonumber\\
&=& O\left(B_n+\epsilon_n^*\right), \label{eq:ew_var5}
\end{eqnarray}
since $\frac{\varphi(\mu_n^*(y),\upsilon_n(y)+\hat k_n)}{(\upsilon_n(y)+\hat k_n)}
\phi\left(\frac{y-\mu_n^*(y)}{\upsilon_n+\hat k_n}\right)\leq H_1$,
$P_2=O(B_n)$ and $P_3=O(\epsilon_n^*)$.

\end{proof}

\subsection{{\bf Proofs of results on the SB model}}

\subsubsection{{\bf Proof of Lemma 6.5}}
\begin{eqnarray}
&& Var\left(\frac{\varphi(\theta_i,\sigma+\hat k_n)}{(\sigma+\hat k_n)}
\phi\left(\frac{y-\theta_i}{\sigma+\hat k_n}\right)\bigg\vert \bY_n\right) \nonumber\\
&=& E\left(\frac{\varphi(\theta_i,\sigma+\hat k_n)}{(\sigma+\hat k_n)}
\phi\left(\frac{y-\theta_i}{\sigma+\hat k_n}\right)-
E\left(\frac{\varphi(\theta_i,\sigma+\hat k_n)}{(\sigma+\hat k_n)}
\phi\left(\frac{y-\theta_i}{\sigma+\hat k_n}\right)\bigg\vert \bY_n\right)\bigg\vert \bY_n\right)^2 \nonumber
\end{eqnarray}

As in 
(\ref{eq:sb_expect}) 
we begin with splitting up the range of $z$ and the range of integration of $\Theta_{M_n}$ and $\sigma$
in the following way:
\begin{eqnarray}
&&E\left(\frac{\varphi(\theta_i,\sigma+\hat k_n)}{(\sigma+\hat k_n)}
\phi\left(\frac{y-\theta_i}{\sigma+\hat k_n}\right)-
E\left(\frac{\varphi(\theta_i,\sigma+\hat k_n)}{(\sigma+\hat k_n)}
\phi\left(\frac{y-\theta_i}{\sigma+\hat k_n}\right)\bigg\vert \bY_n\right)\bigg\vert \bY_n\right)^2 \nonumber\\
&=& S_1^*+S_2^*+S_3^*+S_4^*+S_5^*,
\nonumber\\
\label{eq:sb_var_phi}
\end{eqnarray}
where $S_i^*$ has same ranges of $z$, $\Theta_{M_n}$ and $\sigma$ as $S_i$ in Theorem 5.1 of MB; 
only the integrand of 
the former is now
replaced with\\
$\zeta_{M_n}=\left[\frac{\varphi(\theta_i,\sigma+\hat k_n)}{(\sigma+\hat k_n)}
\phi\left(\frac{y-\theta_i}{\sigma+\hat k_n}\right)-
E\left(\frac{\varphi(\theta_i,\sigma+\hat k_n)}{(\sigma+\hat k_n)}
\phi\left(\frac{y-\theta_i}{\sigma+\hat k_n}\right)\bigg\vert \bY_n\right)\right]$, that is \\
$
S_1^*=\frac{1}{D}\sum_{R_1^*}\int_{I_1}\zeta_{M_n}^2 L(\Theta_{M_n},z, \bY_n)dH(\Theta_{M_n})dG_n(\sigma), \\
S_2^* = \frac{1}{D}\sum_{R_1^*}\int_{I_2}\zeta_{M_n}^2 L(\Theta_{M_n},z, \bY_n)dH(\Theta_{M_n})dG_n(\sigma),\\
S_3^* = \frac{1}{D}\sum_{(R_1^*)^c}\int_{I_3}\zeta_{M_n}^2 L(\Theta_{M_n},z, \bY_n)dH(\Theta_{M_n})dG_n(\sigma),\\
S_4^* = \frac{1}{D}\sum_{(R_1^*)^c}\int_{I_4}\zeta_{M_n}^2 L(\Theta_{M_n},z, \bY_n)dH(\Theta_{M_n})dG_n(\sigma),\\
S_5^* = \frac{1}{D}\sum_{z}\int_{I_5}\zeta_{M_n}^2 L(\Theta_{M_n},z, \bY_n)dH(\Theta_{M_n})dG_n(\sigma). \\
$

Then in the same way as in equations
(\ref{eq:sum_1_bound})--(\ref{eq:sum_5_bound}) it follows that
\begin{eqnarray}
S_1^*\leq 4H^2_1P\left(Z\in R_1^*, \Theta_{M_n}\in E^c, \sigma\leq \sigma_n\mid \bY_n\right)\leq 
4H^2_1(M_n-1)B_{M_n}, \nonumber\\
\label{eq:sum_1_2_bound}
\end{eqnarray}

\begin{eqnarray}
S_2^*&\leq& 4H^2_1P\left(Z\in R_1^*, \Theta_{M_n}\in E, \sigma\leq \sigma_n\mid \bY_n\right) \nonumber\\
&\leq& 4H^2_1\left(1-\frac{1}{M_n}\right)^n \left(\frac{\alpha_n+M_n}{\alpha_n}\right)^{M_n}, 
\label{eq:sum_2_2_bound}
\end{eqnarray}

\begin{eqnarray}
S_3^*\leq 4H^2_1P\left(Z\in (R_1^*)^c, \theta_i\in [-a-c, a+c]^c, \sigma\leq \sigma_n\mid \bY_n\right)\leq 4H^2_1B_{M_n}, \nonumber\\
\label{eq:sum_3_2_bound}
\end{eqnarray}

\begin{eqnarray}
S_5^* \leq 4H^2_1P\left(Z\in (R_1^*)^c, \theta_i\in [-a-c, a+c], \sigma\leq \sigma_n\mid \bY_n\right)\leq 4H^2_1\epsilon_{M_n}^*.\nonumber\\
\label{eq:sum_5_2_bound}
\end{eqnarray}

\begin{eqnarray}
&&S_4^*=\frac{1}{D}\int_{\Theta_{-iM_n}}\int_{\theta_i\in [-a-c,a+c]}\int_{\sigma\leq \sigma_n}\zeta_{M_n}^2 L(\Theta_{M_n},z, \bY_n)dH(\Theta_{M_n})dG_n(\sigma)\nonumber\\
&=& \left[\frac{\varphi(\theta_n^v(y),\sigma_n^v(y)+\hat k_n)}{(\sigma_n^v(y)+\hat k_n)}
\phi\left(\frac{y-\theta_n^v(y)}{\sigma_n^v(y)+\hat k_n}\right)-
E\left(\frac{\varphi(\theta_i,\sigma+\hat k_n)}{(\sigma+\hat k_n)}
\phi\left(\frac{y-\theta_i}{\sigma+\hat k_n}\right)\bigg\vert \bY_n\right)\right]\times \nonumber\\
&&\frac{1}{D}\int_{\Theta_{-iM_n}}\int_{\theta_i\in [-a-c,a+c]}\int_{\sigma\leq \sigma_n}\zeta_{M_n}
L(\Theta_{M_n},z, \bY_n)dH(\Theta_{M_n})dG_n(\sigma).  \label{eq:sum_4_2}
\end{eqnarray}

Let $R^{'}=$\{$\theta_i\in [-a-c,a+c]$, rest $\theta_{l}$'s are in $\mathbb S$, $\sigma\leq \sigma_n$\}.
Then 
we consider the following:
\begin{eqnarray}
&&\frac{1}{D}\int_{R^{'}}\zeta_{M_n}
L(\Theta_{M_n},z, \bY_n)dH(\Theta_{M_n})dG_n(\sigma) \nonumber\\
&=&\frac{1}{D}\int_{R^{'}}
\frac{\varphi(\theta_i,\sigma+\hat k_n)}{(\sigma+\hat k_n)}
\phi\left(\frac{y-\theta_i}{\sigma+\hat k_n}\right)L(\Theta_{M_n},z, \bY_n)dH(\Theta_{M_n})dG_n(\sigma) \nonumber\\
&& \ \ -E\left(\frac{\varphi(\theta_i,\sigma+\hat k_n)}{(\sigma+\hat k_n)}
\phi\left(\frac{y-\theta_i}{\sigma+\hat k_n}\right)\bigg\vert \bY_n\right)P\left(\theta_i\in [-a-c,a+c], \sigma\leq \sigma_n\vert  \bY_n\right)
\nonumber\\
&=& S_{1}^{''}+S_{2}^{''}, \hspace{2mm}\mbox{say}.\nonumber\\
\label{eq:eq:sum_4_3}
\end{eqnarray}

The terms $S_{1}^{''}$ and $S_{2}^{''}$ can be dealt with in the same way as $J_{1}^{''}$ and $J_{2}^{''}$ were 
handled in the corresponding 
EW case 
and it can be shown that
\begin{equation}
S_{4}^*=O\left(M_nB_{M_n}+\left(1-\frac{1}{M_n}\right)^n\left(\frac{\alpha_n+M_n}{\alpha_n}\right)^{M_n}+\epsilon_{M_n}^*\right).
\label{eq:sum_4_order}
\end{equation}

Thus, $\sum_{i=1}^{4}S_i^*=O\left(M_nB_{M_n}+\left(1-\frac{1}{M_n}\right)^n\left(\frac{\alpha_n+M_n}{\alpha_n}\right)^{M_n}+\epsilon_{M_n}^*\right)$.
Hence, the lemma follows.

\section{{\bf Proofs of results associated with Section 9 of MB}}

\subsection{{\bf EW case: Proof of Theorem 9.1 of MB}}
\begin{proof}
Note that
\begin{eqnarray}
&&E\left(\frac{\varphi(\theta_i,\sigma+\hat k_n)}{(\sigma+\hat k_n)}
\phi\left(\frac{y-\theta_i}{\sigma+\hat k_n}\right)\bigg\vert \bY_n\right) \nonumber\\
&=& \frac{1}{D}\int_{\sigma}\int_{\Theta_n}\frac{\varphi(\theta_i,\sigma+\hat k_n)}{(\sigma+\hat k_n)}
\phi\left(\frac{y-\theta_i}{\sigma+\hat k_n}\right)L(\Theta_n,\bY_n)dH(\Theta_n)dG_n(\sigma)  \nonumber\\ 
&\leq& H_1.  \label{param_ew_second}
\end{eqnarray}
As a result,
\begin{eqnarray}
&&\frac{1}{\alpha_n+n}\sum_{i=1}^{n}E\left(\frac{\varphi(\theta_i,\sigma+\hat k_n)}{(\sigma+\hat k_n)}
\phi\left(\frac{y-\theta_i}{\sigma+\hat k_n}\right)\bigg\vert \bY_n\right) \nonumber\\
&\leq& \frac{n}{\alpha_n+n} H_1 \nonumber\\
&\rightarrow& 0.
\label{eq:param_conv_second}
\end{eqnarray}
Now consider
\begin{eqnarray}
&\tilde A_n & \nonumber\\
&=& \frac{1}{D}\int_{\theta_{n+1}}\int_{\Theta_n}\int_{\sigma}\frac{\varphi(\theta_{n+1},\sigma+\hat k_n)}{(\sigma+\hat k_n)\sqrt{2\pi}}
e^{-\frac{\left(y-\theta_{n+1}\right)^2}{2(\sigma+\hat k_n)^2}}
L(\Theta_n,\bY_n)dG_0(\theta_{n+1})dH(\Theta_n)dG_n(\sigma)\nonumber\\
&=& \frac{1}{D}\int_{\theta_{n+1}}\int_{\Theta_n}\int_{\sigma<\sigma_n}\frac{\varphi(\theta_{n+1},\sigma+\hat k_n)}{(\sigma+\hat k_n)\sqrt{2\pi}}e^{-\frac{\left(y-\theta_{n+1}\right)^2}{2(\sigma+\hat k_n)^2}}
L(\Theta_n,\bY_n)dG_0(\theta_{n+1})dH(\Theta_n)dG_n(\sigma)\nonumber\\
&+& \frac{1}{D}\int_{\theta_{n+1}}\int_{\Theta_n}\int_{\sigma>\sigma_n}\frac{\varphi(\theta_{n+1},\sigma+\hat k_n)}{(\sigma+\hat k_n)\sqrt{2\pi}}e^{-\frac{\left(y-\theta_{n+1}\right)^2}{2(\sigma+\hat k_n)^2}}
L(\Theta_n,\bY_n)dG_0(\theta_{n+1})dH(\Theta_n)dG_n(\sigma)\nonumber\\
 &=& W_1+W_2\ \ \mbox{(say)}.\nonumber\\
 \label{eq:a_n_param}
\end{eqnarray}

\begin{eqnarray}
W_2 &=& \frac{1}{D}\int_{\theta_{n+1}}\int_{\Theta_n}\int_{\sigma>\sigma_n}
\frac{\varphi(\theta_{n+1},\sigma+\hat k_n)}{(\sigma+\hat k_n)\sqrt{2\pi}}e^{-\frac{\left(y-\theta_{n+1}\right)^2}{2(\sigma+\hat k_n)^2}}
L(\Theta_n,\bY_n)dG_0(\theta_{n+1})dH(\Theta_n)dG_n(\sigma)\nonumber\\
&\leq& H_1 P\left(\sigma> \sigma_n\vert\bY_n\right). \nonumber
\end{eqnarray}
Thus, by Lemma \ref{lemma:epsilon_ew}, 
\begin{equation}
W_2=O(\epsilon^*_n).
\label{eq:a_n_second} 
\end{equation}
As regards $W_1$, an application of $GMVT$ yields
\begin{eqnarray}
W_1 &=& \frac{1}{D}\int_{\theta_{n+1}}\int_{\Theta_n}\int_{\sigma<\sigma_n}
\frac{\varphi(\theta_{n+1},\sigma+\hat k_n)}{(\sigma+\hat k_n)\sqrt{2\pi}}
e^{-\frac{\left(y-\theta_{n+1}\right)^2}{2(\sigma+\hat k_n)^2}}
L(\Theta_n,\bY_n)dG_0(\theta_{n+1})dH(\Theta_n)dG_n(\sigma)\nonumber\\
&=& \int_{\theta_{n+1}}\frac{\varphi(\theta_{n+1},\sigma_n^{*}(y)+\hat k_n)}{(\sigma_n^{*}(y)+\hat k_n)\sqrt{2\pi}}
e^{-\frac{\left(y-\theta_{n+1}\right)^2}{2(\sigma_n^{*}(y)+\hat k_n)^2}}dG_0(\theta_{n+1})\times
P\left(\sigma< \sigma_n\vert\bY_n\right),\nonumber
\end{eqnarray}
%
DCT ensures that
\begin{eqnarray}
\int_{\theta_{n+1}}\frac{\varphi(\theta_{n+1},\sigma_n^{*}(y)+\hat k_n)}{(\sigma_n^{*}(y)+\hat k_n)\sqrt{2\pi}}
e^{-\frac{\left(y-\theta_{n+1}\right)^2}{2(\sigma_n^{*}(y)+\hat k_n)^2}}dG_0(\theta_{n+1}) \nonumber\\
\rightarrow \int_{\theta_{n+1}}\frac{\varphi(\theta_{n+1},k)}{k\sqrt{2\pi}}e^{-\frac{\left(y-\theta_{n+1}\right)^2}{2k^2}}dG_0(\theta_{n+1}). \label{eq:dct_W_1}
\end{eqnarray}
It then follows from (\ref{eq:dct_W_1}) and the fact that $P\left(\sigma< \sigma_n\vert\bY_n\right)\rightarrow 1$, that
\begin{eqnarray}
W_1\rightarrow \int_{\theta_{n+1}}\frac{\varphi(\theta_{n+1},k)}{k\sqrt{2\pi}}e^{-\frac{\left(y-\theta_{n+1}\right)^2}{2k^2}}dG_0(\theta_{n+1}). \label{eq:W_1_converge}
\end{eqnarray}
Finally, (\ref{eq:a_n_second}) and (\ref{eq:W_1_converge}) guarantee Theorem 12.1. 
\end{proof}

\subsection{{\bf SB case: Proofs of results associated with Section 9.2 of MB}}

\begin{lemma}
\label{lemma:param}
Let $\{r_n\}$ be a sequence tending to zero such that 
$O\left(\log\left(\frac{1}{\epsilon_n}\right)\right)\prec -n\log(r_n)-n\log(n)$,
and let $C_n=O\left(\frac{1}{r^s_n n^2}\right)$; $s>2$.   
Then
\begin{equation}
P(Z\in R_1^*, \Theta_{M_n}\in E, \sigma\leq \sigma_n\vert \bY_n)\gtrsim \left(1-\frac{1}{M_n}\right)^n\left(\frac{\alpha_n+M_n}{\alpha_n}\right)^{M_n}.
\label{eq:sb_param1}
\end{equation}
\end{lemma}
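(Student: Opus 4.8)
The plan is to establish the lower bound that mirrors the upper bound of Lemma~\ref{lemma:sb_lemma9}: there the hypothesis that $C_n$ is asymptotically \emph{large} gives $P(Z\in R_1^*,\Theta_M\in E,\sigma\leq\sigma_n\mid\bY_n)=O\bigl((1-\tfrac1M)^n(\tfrac{\alpha+M}{\alpha})^M\bigr)$, whereas here the hypothesis that $C_n=O(r_n^{-s}n^{-2})$ is \emph{small} should yield the matching $\gtrsim$. First I would reduce to an unrestricted event. Since
\begin{equation}
P(Z\in R_1^*,\Theta_M\in E,\sigma\leq\sigma_n\mid\bY_n)=P(Z\in R_1^*,\sigma\leq\sigma_n\mid\bY_n)-P(Z\in R_1^*,\Theta_M\in E^c,\sigma\leq\sigma_n\mid\bY_n),\nonumber
\end{equation}
and the last term is $O((M-1)B_M)$ by Lemma~\ref{lemma:sb_b_n}, a quantity of strictly smaller order (because of the factor $e^{-c^2/(4\sigma_n^2)}$ in $B_M$), it suffices to bound $P(Z\in R_1^*,\sigma\leq\sigma_n\mid\bY_n)=N/D$ from below, where $N=\sum_{z\in R_1^*}\int_0^{\sigma_n}\int_{\Theta_M}L(\Theta_M,z,\bY_n)\,dH(\Theta_M)\,dG_n(\sigma)$ and $D$ is the full normalizing constant summed over all $z$ and integrated over all $\sigma\in(0,\infty)$. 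To bound $N/D$ from below I would bound $N$ below and $D$ above, reversing every inequality used in the proof of Lemma~\ref{lemma:sb_lemma9}.

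For the denominator I would split $D$ at $\sigma=\sigma_n$. The tail $\sigma>\sigma_n$ is controlled exactly as in Lemma~\ref{lemma:sb_epsilon} through $P(\sigma>\sigma_n)=O(\epsilon_n)$, and the hypothesis $O(\log(1/\epsilon_n))\prec -n\log(r_n)-n\log(n)$ is precisely what forces this tail to be of smaller order than the target, so that it can be absorbed. The remaining part $D_{\leq}$ of $D$ is a sum over \emph{all} allocations $z$ of the Gaussian likelihood integrated against the Polya--urn prior $H$ and against $G_n$ restricted to $(0,\sigma_n]$.

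The heart of the argument is the comparison of $N$ with $D_{\leq}$. After integrating the active $\theta$'s against $G_0$ (Gaussian integrals), the within-cluster sum of squares $\sum_{j}\sum_{t:z_t=j}(Y_t-\bar Y_j)^2=n\hat\sigma^2_{T,n}\sim C_n$ enters the exponent as a factor $e^{-C_n/(2\sigma^2)}$; by the discussion of Section~\ref{sec:SB_post} this is, in the limit, independent of $z$. Because $C_n=O(r_n^{-s}n^{-2})$ with $s>2$ is small, this factor stays bounded away from $0$ down to $\sigma$ of order $r_n$, so the integral over $(0,\sigma_n]$ retains its mass and can be bounded below by its contribution near $\sigma\approx r_n$; this is exactly where both hypotheses on $C_n$ and on $\epsilon_n$ are used. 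Since the \emph{likelihood} contribution (the Gaussian integrals governed by the within-cluster sum of squares) is asymptotically the same for every $z$, the ratio $N/D_{\leq}$ reduces to (i) the combinatorial comparison $\#R_1^*=(M-1)^n$ against the total count $M^n$, which gives the factor $(1-\tfrac1M)^n$, and (ii) the ratio of Polya--urn prior normalizing constants, controlled by the rising factorial $(\alpha)_M=\alpha(\alpha+1)\cdots(\alpha+M-1)$ bounded crudely by $(\alpha+M)^M$, which reproduces the factor $(\tfrac{\alpha+M}{\alpha})^M$ of Lemma~\ref{lemma:sb_lemma9}. In the regime $M\succ O(n)$ this product exceeds $1$, so the bound together with $P\leq1$ forces the probability to tend to $1$, which is the conclusion of practical interest (a given component stays empty almost surely in the limit).

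I expect the main obstacle to be the control of the $\sigma$-integral near $0$ in the lower-bound direction. In Lemma~\ref{lemma:sb_lemma9} the largeness of $C_n$ made $e^{-C_n/(2\sigma^2)}$ collapse and permitted an \emph{upper} bound on this integral, whereas here I must exploit the smallness of $C_n$ to keep it bounded \emph{below} without sacrificing either of the factors $(1-\tfrac1M)^n$ and $(\tfrac{\alpha+M}{\alpha})^M$, and at the same time verify that each discarded contribution---the $\sigma>\sigma_n$ tail, the $\Theta_M\in E^c$ region, and the occupied-component allocations inflating $D$---is of strictly smaller order than the claimed bound. The linkage among $r_n$, $C_n$ and $\epsilon_n$ imposed in the hypotheses is exactly what makes this accounting close, and checking it rigorously is where the real work lies.
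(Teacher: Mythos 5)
Your central premise misreads the hypothesis on $C_n$, and the step built on it would fail. The companion condition $O\left(\log\left(\tfrac{1}{\epsilon_n}\right)\right)\prec -n\log(r_n)-n\log(n)$ forces $-n\log(r_n)-n\log(n)\rightarrow+\infty$, so $r_nn<1$ eventually; writing $\tfrac{1}{r_n^sn^2}=\tfrac{1}{(r_nn)^s}\,n^{s-2}$ then gives $\tfrac{1}{r_n^sn^2}\geq n^{s-2}\rightarrow\infty$ because $s>2$. So the hypothesis bounds $C_n$ by a quantity tending to \emph{infinity}; far from being small, $C_n$ in this lemma is taken to be exactly of that large order (the paper's proof sets $C_n=\tfrac{1}{r_n^sn^2}$, and Remarks 1--2 after the lemma stress that $C_n/n$ may be unbounded and the data must have very high variability). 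Consequently your key step --- that ``smallness'' of $C_n$ keeps $e^{-C_n/(2\sigma^2)}$ bounded away from $0$ down to $\sigma$ of order $r_n$, so the numerator retains its mass near $\sigma\approx r_n$ --- is false: with $C_n$ large and $\sigma\approx r_n\rightarrow 0$ that factor equals $e^{-C_n/(2r_n^2)}$, super-exponentially small, while the denominator always dominates the profile maximum
\begin{equation}
\sup_{\sigma>0}\frac{1}{\sigma^n}e^{-\frac{C_n}{2\sigma^2}}=\left(\frac{n}{C_n}\right)^{n/2}e^{-n/2},
\nonumber
\end{equation}
attained at $\sigma^2=C_n/n$. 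The ratio of your small-$\sigma$ contribution to this maximum vanishes faster than every factor you are trying to retain, so no tuning of $r_n$, $\epsilon_n$ rescues the bound.

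The paper's mechanism is the opposite one, and it also uses a different decomposition. It writes $P(Z\in R_1^*,\Theta_M\in E,\sigma\leq\sigma_n\vert\bY_n)=P(Z\in R_1^*,\Theta_M\in E\vert\bY_n)-P(Z\in R_1^*,\Theta_M\in E,\sigma>\sigma_n\vert\bY_n)$ (not your split on $E^c$), lower-bounds the numerator of the first term by restricting $\sigma$ to the \emph{huge} band $[nk_n,2nk_n]$ with $k_n=r_nC_n\rightarrow\infty$, where the likelihood is of size $(2nk_n)^{-n}$ and $e^{-C_n/(8n^2k_n^2)}\rightarrow 1$ precisely because $n^2r_n^2C_n=r_n^{-(s-2)}\rightarrow\infty$ (another place where $s>2$ enters; nothing in your write-up uses $s>2$), and upper-bounds the full denominator by the profile maximum displayed above: largeness of $C_n$ is exactly what makes $(n/C_n)^{n/2}$ small enough for the ratio to survive, and the $\epsilon_n$ hypothesis is what wins the logarithmic comparison of $\log O(\epsilon_n)$ against $-n\log r_n-n\log n$. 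This route requires an assumption you never supply, $\int_{nk_n}^{2nk_n}dG_n(\sigma)=O(\epsilon_n)$, and --- since the mass producing the lower bound lies in $\{\sigma>\sigma_n\}$, outside the event of interest --- a final reconciliation step needing $O(\epsilon_n)\lesssim\left(\tfrac{\alpha}{\alpha+M}\right)^{2M}$, a maneuver absent from your plan. Two smaller points: for a \emph{lower} bound the Polya-urn prior enters through the lower bound $\left(\tfrac{\alpha}{\alpha+M}\right)^{M}$ on the prior mass of the restricted region, so your appeal to the \emph{upper} bound $(\alpha+M)^M$ on rising factorials does not produce the stated factor; only your combinatorial count $\#R_1^*/M^n=(1-1/M)^n$ coincides with the paper's argument.
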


\begin{proof}

\begin{eqnarray}
P(Z\in R_1^*, \Theta_{M_n}\in E, \sigma\leq \sigma_n\vert \bY_n)  
&=& P(Z\in R_1^*, \Theta_{M_n}\in E\vert \bY_n) \nonumber\\
&&\ \ -P(Z\in R_1^*, \Theta_{M_n}\in E, \sigma> \sigma_n\vert \bY_n) \label{eq:prob_split}
\end{eqnarray}

We first obtain a lower bound for $P(Z\in R_1^*, \Theta_{M_n}\in E\vert \bY_n)$. 
\begin{eqnarray}
&&P\left(Z\in R_1^*, \Theta_{M_n}\in E\vert \bY_n\right)  \nonumber\\ 
&=& \dfrac{\sum_{z\in R_1^*}\int_{\sigma\leq \sigma_n}\int_{\Theta_{M_n}\in E} 
L(\Theta_{M_n}, z, \bY_n)dG_n(\sigma)dH(\Theta_{M_n})}{\sum_{z}\int_{\sigma}\int_{\Theta_{M_n}} L(\Theta_{M_n}, z, \bY_n) 
dG_n(\sigma)dH(\Theta_{M_n})} \nonumber\\
&=& \frac{\sum_{z\in R_1^*}N}{\sum_{z} D}, \label{eq:split_prob_param}
\end{eqnarray}
where $N=\int_{\sigma\leq \sigma_n}\int_{\Theta_{z}\in E}\int L(\Theta_{M_n}, z, \bY_n)dG_n(\sigma)dH(\Theta_{M_n})$ and \\
$D=\int_{\sigma}\int_{\Theta_{M_n}} L(\Theta_{M_n}, z, \bY_n) dG_n(\sigma)dH(\Theta_{M_n})$. \\

Let $k_n$ be a sequence of constants such that $k_n\rightarrow \infty$ as $n\rightarrow\infty$. For
$1< d< M_n$, where $d$ stands for the number of $\theta_j$'s associated with the likelihood for 
a given $z$, denote $E^* = \{\theta_j\in [-a-c ,a+c]\cap [\bar y_j-k_n, \bar y_j+k_n], j=1,\ldots,d; 
\theta_j\in (-\infty, \infty), j=d+1,\ldots,M_n\}$
and for $j=1,\ldots,d$, let $E_j=\{\theta_j\in [-a-c ,a+c]\cap [\bar y_j-k_n, \bar y_j+k_n]\}$.\\ 
Note that,
\begin{eqnarray}
N&\geq& \int_{\Theta_{M_n}\in E^{*}}\int_{nk_n}^{2nk_n}\frac{1}{\sigma^n}
e^{-\dfrac{\sum_{j=1}^{d}\sum_{t:z_t=j}(y_t-\bar y_j)^2}{2\sigma^2}} \nonumber\\
&&\ \ \times e^{-\dfrac{\sum_{j=1}^{d}n_j(\bar y_j-\theta_j)^2}{2\sigma^2}}dH(\Theta_{M_n})dG_n(\sigma)\nonumber\\
&\geq&\left(\frac{1}{2nk_n}\right)^{n}e^{-\frac{C_n^{(2)}}{8n^2k_n^2}}\times e^{-\frac{1}{2n}}\times \left(\frac{\alpha_n}{\alpha_n+M_n}\right)^{M_n}\times
\prod_{j=1}^{d}G_0\left(E_j\right)\times O(\epsilon_n), \nonumber\\
\label{eq:param_N}
\end{eqnarray}
assuming $\int_{nk_n}^{2nk_n}dG_n(\sigma)=O(\epsilon_n)$.
In the above, $C^{(2)}_n=\sup_{z\in R^*_1}\sum_{j=1}^{M_n}\sum_{t:z_t=j}(Y_t-\bar Y_j)^2$, 
as defined before in the proof of Lemma \ref{lemma:sb_lemma9}.\\

Since $k_n\rightarrow \infty$, as $n\rightarrow \infty$, 
$G_0\left(E_j\right)\sim G_0\left([-a-c, a+c]\right)=H_0$ (say), and
\begin{eqnarray}
N\gtrsim \left(\frac{1}{2nk_n}\right)^{n}e^{-\frac{C_n^{(2)}}{8n^2k_n^2}}\times e^{-\frac{1}{2n}}\times 
\left(\frac{\alpha_n}{\alpha_n+M_n}\right)^{M_n}\times H^{M_n}_0\times O(\epsilon_n). \label{eq:param_N1}
\end{eqnarray}

To obtain an upper bound of $D$ note that,
\begin{eqnarray}
e^{-\dfrac{\sum_{j=1}^{M_n}\sum_{t:z_t=j}(y_t-\bar y_j)^2}{2\sigma^2}}\times \frac{1}{\sigma^n}
e^{-\dfrac{\sum_{j=1}^{M_n}n_j(\bar y_j-\theta_j)^2}{2\sigma^2}}\leq 1\times \left(\frac{n}{C_n^{(1)}}\right)^{\frac{n}{2}}e^{-\frac{n}{2}},
\label{eq:D_N1}
\end{eqnarray}
for $0<\sigma<\infty$, where $C^{(1)}_n=\inf_{z\in R^*_1}\sum_{j=1}^{M_n}\sum_{t:z_t=j}(Y_t-\bar Y_j)^2$,
as defined in the proof of Lemma \ref{lemma:sb_lemma9}. This implies
\begin{eqnarray}
D\leq \left(\frac{n}{C_n^{(1)}}\right)^{\frac{n}{2}}e^{-\frac{n}{2}}.  \label{eq:deno_param1}
\end{eqnarray}

Since $C_n^{(1)}\sim C_n^{(2)}\sim C_n$ for large $n$, 
let us obtain the condition under which 
\begin{equation}
\left(\frac{1}{2nk_n}\right)^{n}\times e^{-\frac{C_n}{8n^2k_n^2}}\times H^{M_n}_0
\times O(\epsilon_n)\geq \left(\frac{n}{C_n}\right)^{\frac{n}{2}}
e^{-\frac{n}{2}}
\label{eq:param_cond1}
\end{equation}

Let $k_n=r_nC_n$, where $r_n\rightarrow 0$ and $r_nC_n\rightarrow \infty$.\\
Then,
\begin{eqnarray}
&&\left(\frac{1}{2nk_n}\right)^{n}\times e^{-\frac{C_n}{8n^2k_n^2}}\times H^{M_n}_0\times O(\epsilon_n)\geq \left(\frac{n}{C_n}\right)^{\frac{n}{2}}
e^{-\frac{n}{2}} \nonumber\\
&\Leftrightarrow& \left(\frac{1}{2nr_nC_n}\right)^{n}\times e^{-\frac{C_n}{8n^2r_n^2C_n^2}}
\times H^{M_n}_0\times O(\epsilon_n)\geq 
\left(\frac{n}{C_n}\right)^{\frac{n}{2}}e^{-\frac{n}{2}} \nonumber\\
&\Leftrightarrow& -n\log\left(C_n\right)+\frac{n}{2}\log\left(C_n\right)-\frac{C_n}{8n^2r_n^2C_n^2} \nonumber\\
&&\hspace{3cm}\geq\frac{n}{2}\log(n)-\frac{n}{2}+n\log(2n)+n\log(r_n)
-M_n\log(H_0)-O\left(\log(\epsilon_n)\right)\nonumber\\
&\Leftrightarrow& \frac{n}{2}\log(C_n)+\frac{1}{8n^2r_n^2C_n} \nonumber\\
&&\ \ \leq-\frac{n}{2}\log(n)+\frac{n}{2}-n\log(2)-n\log(n)-n\log(r_n)+M_n\log(H_0)-
O\left(\log\left(\frac{1}{\epsilon_n}\right)\right)  \nonumber\\
&\Leftrightarrow& \frac{n}{2}\log(C_n)+\frac{1}{8n^2r_n^2C_n} \nonumber\\
&&\ \ \leq n\left(-\frac{3}{2}\log(n)+\frac{1}{2}-\log(2)\right)-n\log(r_n)+M_n\log(H_0)-
O\left(\log\left(\frac{1}{\epsilon_n}\right)\right).  \nonumber\\
\label{eq:param_cond2}
\end{eqnarray}

In the R.H.S of the inequality (\ref{eq:param_cond2}) 
we can choose $r_n$ sufficiently small such that term 
$-n\log(r_n)\sim n\left(-\frac{3}{2}\log(n)+\frac{1}{2}-\log(2)\right)-n\log(r_n)+M_n\log(H_0)-
O\left(\log\left(\frac{1}{\epsilon_n}\right)\right).$\\

%

Let $C_n=\frac{1}{r_n^sn^{2}}$, where $s> 2$.\\

Then $k_n=r_nC_n=\frac{1}{r_n^{s-1}n^{2}}\rightarrow \infty$, for $r_n$ going to zero at a sufficiently
fast rate.\\

Also, $n^2r_n^2C_n=\frac{n^2r_n^2}{r_n^s n^{2}}=\frac{1}{r_n^{s-2}}\rightarrow \infty$, for $s > 2$. \\

And,
\begin{equation}
\frac{n}{2}\log(C_n) = -\frac{ns}{2}\log(r_n)-n\log(n)< -n\log(r_n)-n\log(n)<-n\log(r_n). \label{eq:param_cond4}
\end{equation}

So, for $C_n = O\left(\frac{1}{r_n^{s}n^{2}}\right)$; $s > 2$, if $r_n$ is fixed to be sufficiently small such that 
for large $n$, $\frac{1}{8n^2r_n^2C_n}\approx 0$, and 
$n\left(-\frac{3}{2}\log(n)+\frac{1}{2}-\log(2)\right)+
M_n\log(H_0)+O\left(\log\left(\frac{1}{\epsilon_n}\right)\right)\prec -n\log(r_n)$, 
%
then, as $n\rightarrow \infty$, (\ref{eq:param_cond2}) holds, and 
\begin{eqnarray}
\left(\frac{1}{2nk_n}\right)^{n}\times e^{-\frac{C_n}{8n^2k_n^2}}\times O(\epsilon_n)\gtrsim \left(\frac{n}{C_n}\right)^{\frac{n}{2}}
e^{-\frac{n}{2}}. \nonumber
\end{eqnarray}

Hence, it follows that
\begin{eqnarray}
&&P\left(Z\in R_1^*, \Theta_{M_n}\in E\vert \bY_n\right) \nonumber\\
&\gtrsim& \dfrac{\left(M_n-1\right)^{n}\left(\frac{1}{2nk_n}\right)^{n}e^{-\frac{C_n^{(2)}}{8n^2k_n^2}}\times e^{-\frac{1}{2n}}\times \left(\frac{\alpha_n}{\alpha_n+M_n}\right)^{M_n}
\times O(\epsilon_n)}{M_n^{n}\left(\frac{n}{C_n^{(1)}}\right)^{\frac{n}{2}}e^{-\frac{n}{2}}}  \nonumber\\
&\gtrsim& \left(\frac{\alpha_n}{\alpha_n+M_n}\right)^{M_n}
\left(1-\frac{1}{M_n}\right)^{n}. \label{eq:lower_param}
\end{eqnarray}
\\

Now we obtain an upper bound for $P\left(Z\in R_1^*, \Theta_{M_n}\in E, \sigma> \sigma_n\vert \bY_n\right)$.

\begin{eqnarray}
&&P\left(Z\in R_1^*, \Theta_{M_n}\in E, \sigma> \sigma_n\vert \bY_n\right)  \nonumber\\
&=&\dfrac{\sum_{z\in R_1^*}\int_{\sigma_n}^{\infty}\int_{\Theta_{z}\in E}\int 
L(\Theta_{M_n}, z, \bY_n)dG_n(\sigma)dH(\Theta_{M_n})}{\sum_{z}\int_{\sigma}\int_{\Theta_{M_n}} L(\Theta_{M_n}, z, \bY_n) 
dG_n(\sigma)dH(\Theta_{M_n})} \nonumber\\
&=& \frac{\sum_{z\in R_1^*}N}{\sum_{z} D}  \nonumber\\
&\leq& \frac{\sum_{z\in R_1^*}N}{\sum_{z\in R_1^*} D}, \label{eq:param_up1}
\end{eqnarray}
where $N=\int_{\sigma_n}^{\infty}\int_{\Theta_{z}\in E}\int L(\Theta_{M_n}, z, \bY_n)dG_n(\sigma)dH(\Theta_{M_n})$ and \\
$D=\int_{\sigma}\int_{\Theta_{M_n}} L(\Theta_{M_n}, z, \bY_n) dG_n(\sigma)dH(\Theta_{M_n})$.

Note that, in the same as we have obtained equation (\ref{eq:D_N1}), it can shown that
\begin{equation}
N\leq \left(\frac{n}{C_n^{(1)}}\right)^{\frac{n}{2}} e^{-\frac{n}{2}}\times O(\epsilon_n).  \label{eq:param_up2}
\end{equation}

\begin{eqnarray}
D&\geq&  \int_{\sigma> \sigma_n}\int_{\Theta_{M_n}} L(\Theta_{M_n}, z, \bY_n) dG_n(\sigma)dH(\Theta_{M_n}) \nonumber\\
&\geq& \left(\frac{1}{2nk_n}\right)^{n} e^{-\frac{C_n^{(2)}}{8n^2k_n^2}}\times e^{-\frac{1}{2n}} \left(\frac{\alpha_n}{\alpha_n+M_n}\right)^{M_n}\times
\prod_{j=1}^{d}G_0\left(E_j\right) \times O(\epsilon_n)  \nonumber\\
&\gtrsim & \left(\frac{1}{2nk_n}\right)^{n} e^{-\frac{C_n^{(2)}}{8n^2k_n^2}}\times e^{-\frac{1}{2n}} \left(\frac{\alpha_n}{\alpha_n+M_n}\right)^{M_n}\times H^{M_n}_0\times
O(\epsilon_n),
\label{eq:param_up3}
\end{eqnarray}
since for large $n$, $G_0\left(E_j\right)\sim H_0$.\\ 

Since $C_n^{(2)}\sim C_n^{(2)}\sim C_n$ it follows that 
\begin{eqnarray}
\frac{\sum_{z\in R_1^*}N}{\sum_{z} D}\lesssim \dfrac{(M_n-1)^n\left(\frac{n}{C_n}\right)^{\frac{n}{2}} e^{-\frac{n}{2}}
\times O(\epsilon_n)}{M^n_n\left(\frac{1}{2nk_n}\right)^{n} e^{-\frac{C_n}{8n^2k_n^2}}\times e^{-\frac{1}{2n}} 
\left(\frac{\alpha_n}{\alpha_n+M_n}\right)^{M_n}\times H^{M_n}_0\times O(\epsilon_n)}.
\end{eqnarray}

Choose $C_n$ such that
\begin{equation}
\left(\frac{1}{2nk_n}\right)^{n} e^{-\frac{C_n}{8n^2k_n^2}}\times H^{M_n}_0\times O(\epsilon_n)\gtrsim \left(\frac{n}{C_n}\right)^{\frac{n}{2}}e^{-\frac{n}{2}},
\end{equation}
which is exactly the same condition as in the last case of the lower bounds. So, as $n\rightarrow \infty$,
\begin{equation}
P\left(Z\in R_1^*, \Theta_{M_n}\in E, \sigma\geq \sigma_n\right)\lesssim 
\left(\frac{\alpha_n+M_n}{\alpha_n}\right)^{M_n}\left(1-\frac{1}{M_n}\right)^{n}\times 
O(\epsilon_n).  \label{eq:param_upper}
\end{equation}

Hence,
\begin{eqnarray}
&&P\left(Z\in R_1^*, \Theta_{M_n}\in E, \sigma\leq \sigma_n\vert \bY_n\right) \nonumber\\
&=& P\left(Z\in R_1^*, \Theta_{M_n}\in E, \sigma\leq \sigma_n\vert \bY_n\right)-
P\left(Z\in R_1^*, \Theta_{M_n}\in E, \sigma> \sigma_n\vert \bY_n\right)\nonumber\\
&\gtrsim& \left[\left(\frac{\alpha_n}{\alpha_n+M_n}\right)^{M_n}-\left(\frac{\alpha_n+M_n}{\alpha_n}\right)^{M_n}\times O(\epsilon_n)
\right]
\left(1-\frac{1}{M_n}\right)^{n}.\nonumber\\
\label{eq:param_final}
\end{eqnarray}

We must have
\begin{eqnarray}
&&\left(\frac{\alpha_n}{\alpha_n+M_n}\right)^{M_n} \gtrsim \left(\frac{\alpha_n+M_n}{\alpha_n}\right)^{M_n}\times O(\epsilon_n)\nonumber\\
&\Leftrightarrow& O(\epsilon_n) \lesssim \left(\frac{\alpha_n}{\alpha_n+M_n}\right)^{2M_n}. \label{eq:param_cond6}
\end{eqnarray}

Using L' Hospital's rule it can be shown that $\left(\frac{\alpha_n}{\alpha_n+M_n}\right)^{2M_n}\rightarrow 1$, if $M_n=n^b$, $\alpha_n=n^{\omega}$,
$\omega> 0$, $b> 0$ and $\omega-b> b$. Since $\epsilon_n\rightarrow 0$,
for large $n$, (\ref{eq:param_cond6}) holds and does not contradict the assumptions regarding $\epsilon_n$.
Actually, for the above choices, we have 
$\left(\frac{\alpha_n}{\alpha_n+M_n}\right)^{M_n}\sim 
\left(\frac{\alpha_n}{\alpha_n+M_n}\right)^{M_n}-\left(\frac{\alpha_n+M_n}{\alpha_n}\right)^{M_n}
\times O(\epsilon_n)$.\\

Summing up all the results we have,
\begin{equation}
P\left(Z\in R_1^*, \Theta_{M_n}\in E, \sigma\leq \sigma_n\vert \bY_n\right) \gtrsim \left(\frac{\alpha_n}{\alpha_n+M_n}\right)^{M_n}
\left(1-\frac{1}{M_n}\right)^{n},  \label{eq:param_cond7}
\end{equation}
for $C_n=O\left(\frac{1}{r_n^sn^2}\right)$, $s > 2$.\\

\end{proof}

\subsubsection{{\bf Proof of Theorem 9.2 of MB}}

Consider the integral
\begin{eqnarray}
&&\int_{\Theta_{M_n}\in E}\int_{0}^{\sigma_n} \frac{\varphi(\theta_i,\sigma+\hat k_n)}{(\sigma+\hat k_n)}
\phi\left(\frac{y-\theta_i}{\sigma+\hat k_n}\right)
L(\Theta_{M_n}, z, \bY_n)dH(\Theta_{M_n})dG_n(\sigma)  \nonumber\\
&=&  \frac{\alpha_n}{\alpha_n+M_n-1}\int_{\Theta_{M_n}\in E}\int_{0}^{\sigma_n}
\frac{\varphi(\theta_i,\sigma+\hat k_n)}{(\sigma+\hat k_n)}\phi\left(\frac{y-\theta_i}{\sigma+\hat k_n}\right)
\nonumber\\
&&\hspace{3cm} \times L(\Theta_{M_n}, z, \bY_n)dG_0(\theta_{i})dH_{-i}(\Theta_{-iM_n})dG_n(\sigma) \nonumber\\
&&\ \ +\frac{1}{\alpha_n+M_n-1}\sum_{j=1, j\neq i}^{M_n}\int_{\Theta_{-iM_n}\in E_{-i}}\int_{0}^{\sigma_n}
\frac{\varphi(\theta_i,\sigma+\hat k_n)}{(\sigma+\hat k_n)}\phi\left(\frac{y-\theta_j}{\sigma+\hat k_n}\right) \nonumber\\
&&\hspace{3cm} \times L(\Theta_{-iM_n}, z, \bY_n)dH_{-i}(\Theta_{-iM_n})dG_n(\sigma),  \nonumber\\
\label{eq:param_cond8}
\end{eqnarray}
using the Polya urn representation of $H(\Theta_{M_n})$, given by
\begin{equation}
H(\Theta_{M_n})=\left[\frac{\alpha_n}{\alpha_n+M_n-1}G_0(\theta_i)+\frac{1}{\alpha_n+M_n-1}\sum_{j=1, j\neq i}^{M_n}\delta_{\theta_j}(\theta_i)\right]\times
H_{-i}(\Theta_{-iM_n}),
\label{eq:param_dirich}
\end{equation}
where $\Theta_{-iM_n}=\Theta_{M_n}\setminus \theta_i$ and $H_{-i}(\Theta_{-iM_n})$ is the joint distribution of $\Theta_{-iM_n}$ 
and $E_{-i}$ is the set $E$ excluding $\theta_i$.

Let $D = \sum_{z}\int_{\sigma}\int_{\Theta_{M_n}} L(\Theta_{M_n}, z, \bY_n) dG_n(\sigma)dH(\Theta_{M_n})$. Note that
for $z\in R_1^*$, $\theta_i$ is not present in likelihood and hence 
$\{\Theta_{M_n}\in E\} = \{\theta_i\in\mathbb S\}\cap \{\Theta_{-iM_n}\in E_{-i}\}$.  
Then,
\begin{eqnarray}
&&\frac{1}{D}\sum_{z\in R_1^*}\int_{\Theta_{M_n}\in E}\int_{0}^{\sigma_n}
\frac{\varphi(\theta_i,\hat k_n)}{(\sigma+\hat k_n)}\phi\left(\frac{y-\theta_i}{\sigma+\hat k_n}\right)
\nonumber\\
&&\hspace{3cm} \times L(\Theta_{M_n}, z, \bY_n)dG_0(\theta_{i})dH_{-i}(\Theta_{-iM_n})dG_n(\sigma) \nonumber\\
&=& \frac{1}{D}\sum_{z\in R_1^*}\int_{\Theta_{-iM_n}\in E_{-i}}\int_{\theta_i\in\mathbb S}\int_{0}^{\sigma_n}
\frac{\varphi(\theta_i,\hat k_n)}{(\sigma+\hat k_n)}
\phi\left(\frac{y-\theta_i}{\sigma+\hat k_n}\right) dG_0(\theta_{i}) \nonumber\\
&&\hspace{3cm} \times L(\Theta_{M_n}, z, \bY_n)dH_{-i}(\Theta_{-iM_n})dG_n(\sigma) \nonumber\\
&=&\int_{\theta_i\in\mathbb S}\frac{\varphi(\theta_i,\sigma_n^*(y)+\hat k_n)}{(\sigma_n^*(y)+\hat k_n)}
\phi\left(\frac{y-\theta_i}{\sigma_n^*(y)+\hat k_n}\right)dG_0(\theta_{i}) \nonumber\\
&&\hspace{3cm} \times P_{M_n-1}\left(Z\in R_1^*, \Theta_{-iM_n}\in E_{-i}, \sigma\leq \sigma_n\big\vert \bY_n\right), \label{eq:param_cond9}
\end{eqnarray}
where $P_{M_n-1}(\cdot\vert \bY_n)$ is the posterior probability when the mixture model has $M_n-1$ components.

It can be shown that exactly under the same conditions as in Lemma \ref{lemma:param},\\
$P_{M_n-1}\left(Z\in R_1^*, \Theta_{-iM_n}\in E_{-i}, \sigma\leq \sigma_n\big\vert \bY_n\right)$ has the same lower bound with only $M_n$ 
replaced with $M_n-1$. Using L' Hospital's rule it can be easily shown that 
$P_{M_n-1}\left(Z\in R_1^*, \Theta_{-iM_n}\in E^*, \sigma\leq \sigma_n\big\vert \bY_n\right)$ also converges to 1.

DCT ensures that
\begin{eqnarray}
\int_{\theta_i\in\mathbb S}\frac{\varphi(\theta_i,\sigma_n^*(y)+\hat k_n)}{(\sigma_n^*(y)+\hat k_n)}
\phi\left(\frac{y-\theta_i}{\sigma_n^*(y)+\hat k_n}\right)dG_0(\theta_{i})\rightarrow 
\int_{\theta_{i}\in\mathbb S}\frac{\varphi(\theta_i,k)}{k\sqrt{2\pi}}
e^{-\frac{\left(y-\theta_{i}\right)^2}{2k^2}}dG_0(\theta_{i}),\nonumber\\
\label{eq:converge_sb}
\end{eqnarray}
almost surely.

Again,
\begin{eqnarray}
&&\frac{1}{\alpha_n+M_n-1}\frac{1}{D}\sum_{z\in R_1^*}\sum_{j=1, j\neq i}^{M_n}\int_{\Theta_{-iM_n}\in E_{-i}}\int_{0}^{\sigma_n}
\frac{\varphi(\theta_j,\sigma+\hat k_n)}{(\sigma+\hat k_n)}\phi\left(\frac{y-\theta_j}{\sigma+\hat k_n}\right) \nonumber\\
&&\hspace{5cm} \times L(\Theta_{-iM_n}, z, \bY_n)dH_{-i}(\Theta_{-iM_n})dG_n(\sigma)  \nonumber\\
&\leq& H_1\times \frac{1}{\alpha_n+M_n-1} \nonumber\\
&&\ \ \times\sum_{j=1, j\neq i}^{M_n}\frac{1}{D}\sum_{z\in R_1^*}\int_{\Theta_{-iM_n}\in E_{-i}}\int_{0}^{\sigma_n}
L(\Theta_{-iM_n}, z, \bY_n)dH_{-i}(\Theta_{-iM_n})dG_n(\sigma)  \nonumber\\
&\leq& H_1\times  \frac{M_n-1}{\alpha_n+M_n-1}. \nonumber\\
\label{eq:param_alpha_n}
\end{eqnarray}
Note that for $\alpha_n\succ O(M_n)$, $\frac{\alpha_n}{\alpha_n+M_n-1}\rightarrow 1$ and $\frac{M_n-1}{\alpha_n+M_n-1}\rightarrow 0$.

From (\ref{eq:converge_sb}) and (\ref{eq:param_alpha_n}) we conclude that, almost surely,
\begin{eqnarray}
&&\frac{1}{D}\int_{\Theta_{M_n}\in E}\int_{0}^{\sigma_n} \frac{\varphi(\theta_i,\sigma+\hat k_n)}{(\sigma+\hat k_n)}
\phi\left(\frac{y-\theta_i}{\sigma+\hat k_n}\right)
L(\Theta_{M_n}, z, \bY_n)dH(\Theta_{M_n})dG_n(\sigma)  \nonumber\\
&&\rightarrow  \int_{\theta_{i}}\frac{\varphi(\theta_i,k)}{k\sqrt{2\pi}}
e^{-\frac{\left(y-\theta_{i}\right)^2}{2k^2}}dG_0(\theta_{i}),
\label{eq:converge_param}
\end{eqnarray}
as $n\rightarrow \infty$.
The result then follows by boundedness of $E\left(\hat f_{SB}(y|\Theta_{M_n},\sigma)\bigg\vert\bY_n\right)$.

\section{{\bf Overview of asymptotic calculations associated with Section 10 of MB}}

It is easy to see that the upper bounds of the probabilites given in 
Lemmas \ref{lemma:sb_epsilon}--\ref{lemma:sb_lemma10} remain 
the same for this modified model.
For the modified SB model 
the likelihood function $L(\Theta_{M_n}, z, \bY_n, \Pi)$ 
is given by
\begin{eqnarray}
L(\Theta_{M_n}, z, \bY_n, \Pi)=\prod_{\ell=1}^{M_n} \pi_{\ell}^{n_{\ell}+\beta_{\ell}-1}
\prod_{j=1}^{M_n}\frac{1}{\sigma^{n_j}}e^{-\frac{1}{2}\sum_{t:z_t=j}\left(\frac{Y_{t}-\theta_j}{\sigma}\right)^2}.
\label{eq:sb_mod_like}
\end{eqnarray}
From the form (\ref{eq:sb_mod_like}) it is clear that given $z$, the posterior of $\Pi$ is independent of $\Theta_{M_n}$. 
Hence, it is easy to see that, the same calculations as in Lemma \ref{lemma:sb_epsilon} 
yield the following bounds for the modified model:
\begin{eqnarray}
N\leq \frac{1}{(\sigma_n)^n}P(\sigma> \sigma_n)
\sum_{z} \int_{\Pi}\prod_{\ell=1}^{M_n} \pi_{\ell}^{n_{\ell}+\beta_{\ell}-1}d\Pi \nonumber
\end{eqnarray}
and
\begin{eqnarray}
D\geq \dfrac{\exp\left(\frac{-n(a+c_1)^2}{2(b_n)^2}\right)}{(b_n)^n}P(b_n<\sigma\leq \sigma_n)\left(\frac{\alpha_n}{\alpha_n+M_n}\right)^{M_n} H_0^{M_n}
\sum_{z} \int_{\Pi}\prod_{\ell=1}^{M_n} \pi_{\ell}^{n_{\ell}+\beta_{\ell}-1}d\Pi.
\nonumber
\end{eqnarray}

Hence, the upper bound for $P\left(\sigma> \sigma_n\vert \bY_n\right)$ does not change. Similarly, the same argument shows that the upper 
bounds remain the same for all the probabilities except for $P(Z\in R_1^*, \Theta_{M_n}\in E, \sigma\leq \sigma_n\vert \bY_n)$.
For $P(Z\in R_1^*, \Theta_{M_n}\in E, \sigma\leq \sigma_n\vert \bY_n)$, the same calculations as in 
Lemma \ref{lemma:sb_lemma9} show that
\begin{eqnarray}
N&\leq& \left(\frac{1}{\sigma_n}\right)^ne^{-
\frac{C_n^{(1)}}{2\sigma_n^2}}\times G_0([-a-c, a+c])\times O(1-\epsilon_n) \nonumber\\
&& \hspace{5cm} \times\sum_{z\in R_1^*} \int_{\Pi}\prod_{\ell=1}^{M_n} \pi_{\ell}^{n_{\ell}+\beta_{\ell}-1}d\Pi. \nonumber\\
\label{eq:N_new}
\end{eqnarray}
Similarly,
\begin{eqnarray}
D &\geq& \left(\frac{1}{2nk_n}\right)^n\times e^{-\frac{C_n^{(2)}}{8n^2k_n^2}}\times e^{-\frac{1}{2n}}\times 
\left(\frac{\alpha_n}{\alpha_n+M_n}\right)^{M_n}
\nonumber\\
&&\hspace{10mm}\times\prod_{j=1}^{d}G_0([\bar Y_j-k_n, \bar Y_j+k_n]\cap\mathbb S)\times O(\epsilon_n) \nonumber\\
&&\hspace{20mm}\times\sum_{z} \int_{\Pi}\prod_{\ell=1}^{M_n} \pi_{\ell}^{n_{\ell}+\beta_{\ell}-1}d\Pi.
\label{eq:D_new}
\end{eqnarray}


Since 
\begin{equation}
\frac{\sum_{z\in R^*_1} \int_{\Pi}\prod_{\ell=1}^{M_n} \pi_{\ell}^{n_{\ell}+\beta_{\ell}-1}d\Pi}
{\sum_{z} \int_{\Pi}\prod_{\ell=1}^{M_n} \pi_{\ell}^{n_{\ell}+\beta_{\ell}-1}d\Pi}
= P(Z\in R^*_1) = \frac{(M_n-1)^n}{M_n^n},\nonumber
\end{equation}
the upper bound remains the same as before.\\

It can also be shown that $E\left(\hat f^*_{SB}(y\mid \Theta_{M_n}, \Pi,\sigma)\right)$ converges to $\frac{1}{k}\phi\left(\frac{y-\theta^*(y)}{k}\right)$.
We will split the expectation in the same way as in the proof of Theorem 5.2 
into $S_1, S_2, S_3, S_4, S_5$, with the integrand
$\frac{\varphi(\theta_i,\sigma+\hat k_n)}{(\sigma+\hat k_n)}\phi\left(\frac{y-\theta_i}{\sigma+\hat k_n}\right)$ replaced with 
$\sum_{i=1}^{M_n}\pi_i\frac{\varphi(\theta_i,\sigma+\hat k_n)}{(\sigma+\hat k_n)}
\phi\left(\frac{y-\theta_i}{\sigma+\hat k_n}\right)$. The upper bounds of $S_i$ for $i\neq 4$,
will be same. We illustrate this with $S_1$; for the others the same arguments will hold. 
\begin{eqnarray}
S_1 &=& \frac{1}{D}\sum_{R_1^*}\int_{\Pi}\int_{I_1}\sum_{i=1}^{M_n}\pi_i\frac{\varphi(\theta_i,\sigma+\hat k_n)}{(\sigma+\hat k_n)}\phi\left(\frac{y-\theta_i}{\sigma+\hat k_n}\right) 
\nonumber\\
&& \hspace{1cm} \times L(\Theta_{M_n}, z, \bY_n, \Pi)dH(\Theta_{M_n})dG_n(\sigma)d\Pi \nonumber\\
&\leq& H_1 \frac{1}{D}\sum_{R_1^*}\int_{\Pi}\int_{I_1}\sum_{i=1}^{M_n}\pi_i L(\Theta_{M_n}, z, \bY_n, \Pi)dH(\Theta_{M_n})dG_n(\sigma)d\Pi \nonumber\\
&=& H_1 \frac{1}{D}\sum_{R_1^*}\int_{I_1} L(\Theta_{M_n}, z, \bY_n)dH(\Theta_{M_n})dG_n(\sigma), \nonumber
\end{eqnarray}
using the fact that $\sum_{i=1}^{M_n}\pi_i= 1$. 
Hence $S_1$ has same order as $P(Z\in R_1^*, \Theta_{M_n}\in E, \sigma\leq \sigma_n\vert \bY_n)$ for the modified model also.\\

To investigate the form of the density where the modified SB model converges to, note that
\begin{eqnarray}
S_4 &=& \frac{1}{D}\sum_{(R_1^*)^c}\int_{\Pi}\int_{I_4}\sum_{i=1}^{M_n}\pi_i
\frac{\varphi(\theta_i,\sigma+\hat k_n)}{(\sigma+\hat k_n)}\phi\left(\frac{y-\theta_i}{\sigma+\hat k_n}\right)
\nonumber\\
&& \hspace{2cm}\times L(\Theta_{M_n}, z, \bY_n, \Pi)dH(\Theta_{M_n})dG_n(\sigma)d\Pi \nonumber\\
&=& \frac{1}{D}\int_{\Pi}\int_{I_4}\sum_{i=1}^{M_n}\pi_i\frac{\varphi(\theta_i,\sigma+\hat k_n)}{(\sigma+\hat k_n)}
\phi\left(\frac{y-\theta_i}{\sigma+\hat k_n}\right) \nonumber\\
&& \hspace{2cm}\times\sum_{(R_1^*)^c}L(\Theta_{M_n}, z, \bY_n, \Pi)dH(\Theta_{M_n})dG_n(\sigma)d\Pi. \nonumber\\
\label{eq:modified_S4}
\end{eqnarray}
 
For each $i$, using $GMVT$ we get
\begin{eqnarray}
&& \frac{1}{D}\int_{\Pi}\int_{I_4}\pi_i\frac{\varphi(\theta_i,\sigma+\hat k_n)}{(\sigma+\hat k_n)}
\phi\left(\frac{y-\theta_i}{\sigma+\hat k_n}\right) \nonumber\\
&& \hspace{1cm}\times\sum_{(R_1^*)^c}L(\Theta_{M_n}, z, \bY_n, \Pi)dH(\Theta_{M_n})dG_n(\sigma)d\Pi \nonumber\\
&=& \frac{\varphi(\theta_i,\sigma_n^*(y)+\hat k_n)}{(\sigma_n^*(y)+\hat k_n)}
\phi\left(\frac{y-\theta_n^*(y)}{\sigma_n^*(y)+\hat k_n}\right) \nonumber\\
&& \hspace{1cm}\times\frac{1}{D}\int_{\Pi}\int_{I_4}\pi_i\sum_{(R_1^*)^c}L(\Theta_{M_n}, z, \bY_n, \Pi)dH(\Theta_{M_n})dG_n(\sigma)d\Pi, \nonumber\\
\label{eq:new_convergence}
\end{eqnarray}
where, for every $y$, $\theta_n^*(y)\in (-a-c, a+c)$, and $\sigma_n^*(y)\in (0,\sigma_n)$.\\ 

Hence, $S_4$ given by (\ref{eq:modified_S4}) becomes
\begin{eqnarray}
S_4 &=& \frac{\varphi(\theta_i,\sigma_n^*(y)+\hat k_n)}{(\sigma_n^*(y)+\hat k_n)}
\phi\left(\frac{y-\theta_n^*(y)}{\sigma_n^*(y)+\hat k_n}\right) \nonumber\\
&& \hspace{1cm}\times\frac{1}{D}\int_{\Pi}\int_{I_4}\sum_{(R_1^*)^c}L(\Theta_{M_n}, z, \bY_n, \Pi)dH(\Theta_{M_n})dG_n(\sigma)d\Pi, \nonumber\\
&=& \frac{\varphi(\theta_i,\sigma_n^*(y)+\hat k_n)}{(\sigma_n^*(y)+\hat k_n)}\phi\left(\frac{y-\theta_n^*(y)}{\sigma_n^*(y)+\hat k_n}\right) \nonumber\\
&& \hspace{1cm}\times P\left((R_1^*)^c,I_4\vert\bY_n\right),
\label{eq:new_convergence2}
\end{eqnarray}
again using the fact that $\sum_{i=1}^{M_n}\pi_i=1$.

Since it is already shown in connection with the proofs of Theorems 5.1 (Section \ref{subsubsec:proof_theorem_5.1}) 
and 5.2 (\ref{subsubsec:proof_theorem_5.2}) that, almost surely,
$\frac{\varphi(\theta_i,\sigma_n^*(y)+\hat k_n)}{(\sigma_n^*(y)+\hat k_n)}
\phi\left(\frac{y-\theta_n^*(y)}{\sigma_n^*(y)+\hat k_n}\right)
\rightarrow \frac{\varphi(\theta^*(y),k)}{k}\phi\left(\frac{y-\theta^*(y)}{k}\right)$ and
$P\left((R_1^*)^c,I_4\vert\bY_n\right)\rightarrow 1$, it follows that
$S_4\rightarrow \frac{1}{k}\phi\left(\frac{y-\theta^*(y)}{k}\right)$.
With very minor adjustments to the proof of Theorem 5.3, here it can be proved that the EW model
and the modified SB model converge to the same distribution.

\renewcommand\baselinestretch{1.3}
\normalsize
\bibliographystyle{ECA_jasa}
\bibliography{thesis_saby}

\end{document}